%%%%%%%%%%%%%%%%%%%%%%%%%%%%%%%%%% Packages %%%%%%%%%%%%%%%%%%%%%%%%

\documentclass[11pt]{amsart}
\usepackage{hyperref, upref, comment}
\usepackage{amsmath,amsfonts,amscd,amssymb,amsthm}
\usepackage{longtable}
\usepackage[english]{babel}
\usepackage[utf8]{inputenc} 
\usepackage[active]{srcltx}
\usepackage[T1]{fontenc}
\usepackage{graphicx}
\usepackage{pstricks}
\usepackage{bbm}
\usepackage{MnSymbol}
\usepackage{stmaryrd}
\usepackage{nicefrac}
\usepackage{calrsfs}
\usepackage{mathtools}
\usepackage{mdwlist}   % lists and itemize
\usepackage{epstopdf} % for pdflatex to handle eps pictures

%%%%%%%%%%%%%%%%%%%%%%%%%%%%%%%%%%%%%%%%%%%%%%% environments %%%%%%%%%%%%%%%%%%%%%%%%%%%%%%
\newtheorem{theorem}{Theorem}[section]

\newtheorem*{claim*}{Claim}

\newtheorem{coro}[theorem]{Corollary}
\newtheorem{lem}[theorem]{Lemma}
\newtheorem{pro}[theorem]{Proposition}

%%%%%%%%%%%%%%%%%%%%%%%%%%%%%%% Equations et proof %%%%%%%%%%%%%%%%%%%%%%%%%%%%%
\newcommand{\be}[1]{\begin{equation}\label{#1}}
\newcommand{\ee}{\end{equation}}
\numberwithin{equation}{section}
\newcommand{\eps}{\varepsilon}
\newcommand{\ttau}{\tilde{\tau}_s}
\newcommand{\teps}{\tilde{\varepsilon}}
\newcommand{\tdelta}{\tilde{\delta}}
\newcommand{\tmu}{\tilde{\mu}_s}
\newcommand{\NI}{|\tilde{N}_I|}
\newcommand{\ba}[1]{\begin{align}\label{#1}}
\newcommand{\ea}{\end{align}}
\numberwithin{equation}{section}
\newcommand{\ben}{\begin{equation*}}
\newcommand{\een}{\end{equation*}}
\numberwithin{equation}{section}

%\renewenvironment{proof}[1][\relax]%s
%  {\paragraph{Proof\ifx#1\relax\else~of #1\fi}}%
%  {~\hfill$\square$\par\bigskip}

\begin{document}
\title[Localization in random geometric graphs]{Localization in random geometric graphs with too many edges}
\author{Sourav Chatterjee}
\address{\newline Department of Statistics \newline Stanford  University\newline Email: \textup{\tt souravc@stanford.edu}}
\thanks{Research partially supported by ERC AG ``COMPASP'' and NSF grant DMS-1005312}
\author{Matan Harel}
\address{\newline Department of Mathematics \newline Tel Aviv University\newline Email: \textup{\tt mataharel8@tauex.tau.ac.il}}

\keywords{Random geometric graph, Poisson point process, large deviation, localization}
\subjclass[2010]{60F10, 05C80, 60D05}

\begin{abstract}
%\noindent
We consider a random geometric graph $G(\chi_n, r_n)$, given by  connecting two vertices of a Poisson point process $\chi_n$ of intensity $n$ on the $d$-dimensional unit torus whenever their distance is smaller than the parameter $r_n$. The model is conditioned on the rare event that the number of edges observed, $|E|$, is greater than $(1 + \delta)\mathbb{E}(|E|)$, for some fixed $\delta >0$. This article proves that upon conditioning, with high probability there exists a ball of diameter $r_n$ which contains a clique of at least $\sqrt{2 \delta \mathbb{E}(|E|)}(1 - \varepsilon)$ vertices, for any given $\varepsilon >0$. Intuitively, this region contains all the ``excess'' edges the graph is forced to contain by the conditioning event, up to lower order corrections. As a consequence of this result, we prove a large deviations principle for the upper tail of the edge count of the random geometric graph. The rate function of this large deviation principle turns out to be non-convex.
\end{abstract}

\maketitle

%\tableofcontents

\section{Introduction}\label{Intro}
The random geometric graph is a simple stochastic model, first studied in \cite{Hafner} in 1972, for generating a graph: given the parameters $n$ and $r$, consider a Poisson point process of intensity $n$ on the $d$-dimensional unit torus, equipped with a translation-invariant metric inherited from a norm $\| \cdot \|$ on Euclidean space (which may not be the Euclidean norm), and declare an edge between any two vertices that are at distance $\le r$ from each other.

Unlike the well-known Erd\H{o}s--R\'enyi random graph, the random geometric graph's definition leads to strong dependence between edges: if three vertices form a ``V'' shaped graph, they are far more likely to have the third edge of the triangle than if no assumption were made on the other edges, as a consequence of the triangle inequality.

Many properties of this graph model have been studied. The classic monograph of Mathew Penrose \cite{Penrose2} studies results pertaining to many graph-theoretical functions of random geometric graphs, including (but not limited to) laws of large numbers and central limit theorems for subgraph counts, independence number, and chromatic number, as well as many properties connected to the giant component. Many of the results presented in this monograph have been improved and generalized by Penrose and others in the years since its initial publication. Besides this, there have been investigations into other probabilistic features, such as threshold functions for cover times and mixing times \cite{Avin} and thresholds for monotone graph functions \cite{Goel}. This list is far from comprehensive, of course, and the random geometric graph remains an active object of research.

The random geometric graph is also closely related to the random connection continuum percolation model. In that model, the vertex set is given by an (almost surely infinite) Poisson point process of fixed intensity on $\mathbb{R}^d$, and two points are connected with some probability that varies (and usually decreases) with their distance. In particular, the special case in which the radius of connection is deterministically fixed at 1 was the model that initiated the study of this kind of random geometry, in the seminal paper of Gilbert \cite{Gilbert}. The properties of interest in this model are the existence of an infinite connected component, as well as the behavior of the subset of $\mathbb{R}^d$ that is at distance at most 1 from one of the vertices of the graph (the so-called ``Poisson blob'') and its complement (the ``vacant set''). Continuum percolation is treated in detail in a book-length monograph by Meester and Roy \cite{Meester}, as well as in the book by Grimmett~\cite{Grimmett}.

Most of the work done on random geometric graphs is concerned with either the behavior of a typical graph --- the graph we are likely to see for a given $r$ as $n$ goes to infinity --- or typical deviations from that behavior --- that is, central limit theorems. In this paper, we are concerned with the behavior of the model conditioned on a rare event. Specifically, we will study the random geometric graph conditioned on having many more edges than is expected (a formal description will follow). The large deviation regime of the upper tail of any subgraph count of the random geometric graph is not well understood, though some bounds are available: Janson \cite{Janson} established concentration inequalities for $U$-statistics, a general class of statistics which includes the subgraph counts we are interested in. These upper bounds work in very general settings, but are not tight, even up to constants in the exponent. Large deviation principles have been proven for functionals of random point processes in which the contribution of any particular vertex is uniformly bounded \cite{Schreiber}, but no such bound is known for functionals with possibly large influence, such as the edge count of the graph.

As motivation for this detailed study, we consider the problem in a more familiar context: the ``infamous upper tail'' \cite{Janson2} of the triangle count $T$ in the Erd\H{o}s--R\'enyi random graph, $G(n,p)$. After many years of development of increasingly strong bounds, the first breakthrough was made by Kim and Vu \cite{Vu} and Janson et al.~\cite{Janson3} independently, who proved that, for any $\delta >0$ and whenever $p \gg (\log n)/n$,
\[
\exp[ - c(\delta) n^2 p^2 \log(1/p)] \leq \mathbb{P}[T > (1 + \delta)\mathbb{E}[T]] \leq \exp[- C(\delta) n^2 p^2] \, ,
\]
where $c(\cdot)$ and $C(\cdot)$ depend only on $\delta$. Recently, there has been renewed interest in these type of tail estimates. In 2010, Chatterjee \cite{Chatterjee2} and Demarco and Kahn \cite{Demarco} (in independent works) established the correct order of the upper tail of triangles and other small cliques by adding the missing logarithmic term to the upper bound, without providing good control of the leading-order constants.  The work of Chatterjee and Dembo~\cite{Chatterjee3}  on nonlinear large deviations proved that the upper tail probability can be described in terms of a continuous variational problem when $p$ is vanishing sufficiently slowly --- namely, when $n^{-1/42} \ll p \ll 1$. Generalizations and expansions of the approach by Eldan \cite{Eldan} established the variational equivalence for $n^{-1/18} \log n \ll p \ll 1$; Cook and Dembo \cite{Cook} proved the result for $n^{-1/3} \ll p \ll 1$, and Augeri \cite{Augeri} did the same for $n^{-1/2} (\log n)^2 \ll p \ll 1$. Lubetzky and Zhao \cite{Lubetzky} solved the variational problem for triangles (Bhattacharya, Ganguly, Lubetzky and Zhao \cite{bglz} did the same for more general subgraphs), which thus calculated both the order and the leading-order constant for the upper tail question in a certain regime of sparse Erd\H{o}s--R\'enyi random graphs. The main results and ideas from this body of work are summarized in the survey article~\cite{chatterjeesurvey}. Recently, Harel, Mousset, and Samotij \cite{HMS} used a combinatorial approach to prove that the upper tail probability of the subgraph count of any fixed, regular graph can be expressed in terms of the solution of a discrete variational problem for nearly all values of $p$ where localization is believed to hold. Unfortunately, all the papers described above are only valid for functions of independent Bernoulli random variables, and are therefore not applicable to the problem we are studying here.

In this work, we use the properties the random geometric graph inherits from the geometry of $\mathbb{R}^d$ to evaluate the upper tail large deviation rate function. In addition, we provide a ``structure theorem'' to describe the graph-theoretical structure of the model conditioned on having too many edges. Specifically, we show that such a conditional model exhibits {\em localization}. Heuristically, this phenomenon can be described as a scenario in which a small number of vertices will contribute almost all the extra edges that we require the graph to exhibit, while the edge count of the ``bulk'' of the graph will remain largely unchanged, in some weak sense. Furthermore, we will show that the geometry of the localized region has the shape of a ball in the given norm (we will make these two statements more precise at the end of the next section). Outside of the aforementioned works of Lubetzky and Zhao \cite{Lubetzky}, Bhattacharya et al.~\cite{bglz}, and Harel, Mousset and Samotij ~\cite{HMS} in the Erd\H{o}s--R\'enyi model, this work is the only (as far as the authors are aware) to establish that the large deviation regime of a subgraph count is (weakly) equivalent to planting a combinatorial structure in the usual, unconditional graph.

The fact that large deviation events may be dominated by configurations with a small number of very large contributions was known relatively early in the history of large deviation theory: a survey by Nagaev \cite{Nagaev}, summarizing a series of papers written in the Soviet Union in the 1960's and 70's, includes this observation for sums of i.i.d. random variables with stretched-exponential tails. In our context, the natural combinatorial structure for creating many edges with a small number of edges is a ``giant clique''. The clique number, the (typical) size of the largest clique of the random geometric graph, falls under the general class of scan statistics, and has been shown to focus on two values with high probability for certain values of $r$ (see \cite{Penrose4}, \cite{Muller}); however, these works do not explore the large deviation regime. Our work uses techniques from large deviations, concentration inequalities, convex analysis, and geometric measure theory. A key component in the proof is a technique for proving localization that has previously appeared in \cite{Talagrand1} and \cite{Chatterjee}.
\section{Definitions and Main Results}\label{MainResultsSection}
Let $\chi_n$ be a Poisson Point Process of intensity $n$ on the $d$-dimensional unit torus $\mathbb{T}^d = [0,1]^d$. For any $S \subset \mathbb{T}^d$, we denote the restriction of $\chi_n$ to $S$ by $\chi_n(S)$. Let $N := |\chi_n|$. Recall that $N$ is a Poisson random variable with mean $n$, and conditional on $N$, $\chi_n$ is just a set of $N$ points, each chosen independently and uniformly at random. Let $r_n$ be a positive sequence that  decreases to $0$ as $n\to \infty$, and $\| \cdot \|$ be some norm on $\mathbb{R}^d$ that induces a translation-invariant metric on $\mathbb{T}^d$. We define the random geometric graph $G(\chi_n,r_n) : = (V,E)$, where $V = \chi_n = \{v_1,\ldots, v_N\}$, enumerated arbitrarily, and $E$ is the set of unordered pairs $\{i,j\}$ such that $\| v_i - v_j\| \leq r_n$. Figure 1 shows a particular instance of $G(\chi_{150},0.1)$.

Letting $1_{i,j}$ be the indicator that there is an edge between $v_i$ and $v_j$, we can calculate the expected value of $|E|$, the number of edges in the
graph:
\begin{align*}
\mathbb{E}(|E|) &= \mathbb{E}\left(\displaystyle \sum_{1\le i<j\le N} 1_{i,j} \right) = \mathbb{E}\left[ {N \choose 2} \mathbb{E}(1_{1,2} \mid N) \right]\\
&= \frac{n^2}{2} \cdot \mathbb{P}(\|v_1-v_2\|\le r_n) \, .
\end{align*}
Denoting Lebesgue measure on both $\mathbb{R}^d$ and $\mathbb{T}^d$ by $\lambda(\cdot)$, we define
\[
\nu := \lambda\left[\left\{x \in \mathbb{R}^d: \| x\| \leq 1 \right\}\right] \,
\]
to be the volume of the unit ball in the norm $\| \cdot \|$. Then,by translation invariance of the metric induced on $\mathbb{T}^d$,  $\mathbb{P}(\|v_1-v_2\|\le r_n)$ is simply $\nu r_n^d$, as long as $r_n$ is sufficiently small (to ensure the ball on the torus has the same measure as the one in $\mathbb{R}^d$)
 \begin{equation*}
\mu_n := \mathbb{E}(|E|) = \frac{\nu \cdot n^2 r_n^d}{2} \, .
\end{equation*}
We can also compute the variance of $|E|$:
\begin{align}\label{eq:VarianceE}
&\text{Var}(|E|) = \mathbb{E}\left[ \text{Var}(|E| \mid N) \right] + \text{Var}\left( \mathbb{E}[|E| \mid  N] \right) \\ & = \mathbb{E}\left[ \mathbb{E}\left(\sum_{1 \leq i < j \leq N, 1 \leq i' < j' \leq n } (1_{i,j} - \nu r_n^d) (1_{i',j'} - \nu r_n^d) \mid N \right)\right] \nonumber \\
&\qquad \qquad + (\nu r_n^d)^2 \text{Var}\left[{N \choose 2} \right] \nonumber\\ & = \frac{n^2}{2} \left(\nu r_n^d - \nu^2 r_n^{2d}\right) + \left(n^3 + \frac{n^2}{2}\right)\nu^2 r_n^{2d} \nonumber\\ & = \mu_n \left(1 + 2 \nu n r_n^d \right) \nonumber,  
\end{align}
where we note that $(i,j) \neq (i',j')$ implies that the indicators $1_{i,j}$ and $1_{i',j'}$ are conditionally independent. This implies that, as long as $\mu_n \rightarrow \infty$, $\text{Var}(|E|) \ll \mu_n^2$, and $|E|$ concentrates around its mean by Chebyshev's inequality.

For the rest of the article, we suppose the existence of a fixed constant $\delta^* >0$ such that, for all sufficiently large $n$,
\begin{equation}\label{rcond}
n^{ (\delta^*-2)/d } \leq r_n \leq n^{- \delta^*/d} \, .
\end{equation}
The lower bound ensures that the expected number of edges grows as a positive power of $n$; the upper bound excludes the possibility of $r_n = n^{-o(1)}$ --- that is, bounded above and below by $n^{\varepsilon}$ and $n^{-\varepsilon}$, respectively, for any fixed $\varepsilon >0$ and $n \geq n_0$, for some $n_0$ depending on $\eps$.  We will reuse the notation $n^{o(1)}$ throughout the paper in this sense, and we will allow the (implicit) $\varepsilon$ to depend on any fixed  parameter other than $n$. We define the parameter $p$ as
\begin{equation}\label{pdef}
p : = \displaystyle \lim_{n \rightarrow \infty} \frac{ \log \mu_n}{\log n}\, ,
\end{equation}
implicitly assuming that the limit exists. This ensures that $\mu_n = f(n) n^p$, where $f(n) = n^{o(1)}$. Notice that
\begin{equation}\label{pbounds}
\delta^* \leq p \leq 2 - \delta^*\, ,
\end{equation}
thanks to \eqref{rcond}. We will say the random geometric graph is {\em admissible} if $r_n$ satisfies \eqref{rcond} and the limit above exists.

\begin{figure}[t]
 \centering
\includegraphics[scale=0.45]{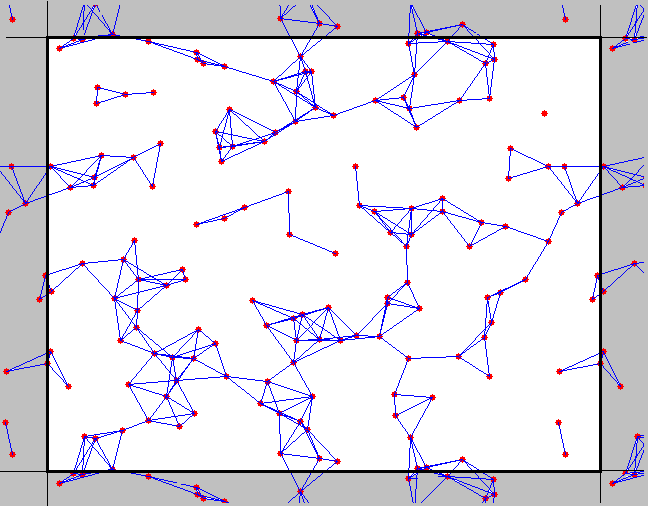}
\begin{center}
\caption{An instance of the random geometric graph $G(\chi_{150},0.1)$, with respect to the Euclidean norm. The graph has 148 vertices and 343 edges. The
gray area is the white unit square translated, to show periodicity}
\end{center}
\end{figure}

The following theorem is the main result of the paper:
\begin{theorem}\label{thm1}
Let $G(\chi_n,r_n)$ be an admissible random geometric graph model on $\mathbb{T}^d$ with respect to some norm $\| \cdot \|$. Define
\[
\tau_n := \nu \cdot (r_n/2)^d \, ,
\]
that is, $\tau_n$ is the volume of a ball of diameter $r_n$. Fix $\delta > 0$ and $\varepsilon >0$, and let $\mathcal{F}_n(\eps)$ be the event that there exists a ball $B$ of diameter $r_n$ such that
\begin{enumerate}
\item any convex set $S \subset B$ satisfies 
\begin{equation*}
 \left|\frac{|\chi_n(S)|}{\sqrt{2 \delta \mu_n}} - \frac{\lambda(S)}{\tau_n}\right| < \varepsilon\, ,
\end{equation*}
\item for any convex set $S'\subset B^c$ such that $\text{{\em diam}} (S') \leq r_n$ and $\lambda(S') > \varepsilon \tau_n$,
\begin{equation*}
\frac{|\chi_n(S')|}{\sqrt{2 \delta \mu_n}} < \varepsilon \cdot \frac{\lambda(S')}{\tau_n}\, .
\end{equation*}
\end{enumerate} 
Then
\[
\displaystyle \lim_{n\rightarrow \infty} \mathbb{P}\left[\mathcal{F}_n(\eps) \, \big \mid \, |E| > (1 + \delta) \mu_n\right] = 1 \, . 
\]

\end{theorem}

As a consequence of Theorem \ref{thm1}, we prove that the upper tail of the edge count of random geometric graphs satisfies a large deviation principle.
Recall that a sequence of non-negative random variables $X_n$ satisfies an {\em upper tail} large deviation principle with speed $s(n)$ and rate function $I(x)$ if, for any closed set
$F \subset (0,\infty)$,
\[
\displaystyle \limsup_{n\to \infty}\frac{1}{s(n)} \log \mathbb{P}\left(\frac{X_n - \mathbb{E}[X_n]}{\mathbb{E}[X_n]} \in F \right) \leq -\displaystyle \inf_{x \in F} I(x)\,
,
\]
and for any open set $G \subset [0,\infty)$,
\[
\displaystyle \liminf_{n\to\infty} \frac{1}{s(n)} \log \mathbb{P}\left(\frac{X_n - \mathbb{E}[X_n]}{\mathbb{E}[X_n]} \in G \right) \geq - \displaystyle \inf_{x \in G} I(x)\,
.
\]
(For more on large deviation principles and their applications, see e.g.~\cite{Dembo}.) The following theorem gives the upper tail large deviation principle for the number of edges in a random geometric graph.
\begin{theorem}\label{thmLDP}
Let $G(\chi_n,r_n)$ be an admissible random geometric graph model on the $d$-dimensional torus, with the same assumptions as in Theorem \ref{thm1}. Define
\[
I(x) := \left(\frac{2-p}{2}\right) \sqrt{2 x}\, ,
\]
where $p$ is defined as in \eqref{pdef}. Then $|E|$ satisfies an upper tail large deviation principle with speed $s(n) = \sqrt{\mu_n} \log n$ and rate function~$I(x)$.
\end{theorem}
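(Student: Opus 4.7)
The plan is to reduce the LDP to the pointwise asymptotic
\[
\lim_{n \to \infty} \frac{1}{\sqrt{\mu_n} \log n} \log \mathbb{P}\bigl( |E| \geq (1+\delta) \mu_n \bigr) = -I(\delta)
\]
for every fixed $\delta > 0$. Because $I$ is continuous and strictly increasing on $[0, \infty)$, the standard LDP unpacking then yields the full statement: any closed $F \subset (0, \infty)$ is bounded below by $\delta_0 := \inf F > 0$ with $\inf_F I = I(\delta_0)$, while for open $G$ one chooses $x \in G$ with $I(x) < \inf_G I + \varepsilon$ together with $\eta > 0$ with $[x, x+\eta) \subset G$, and writes $\mathbb{P}(X_n \in G) \geq \mathbb{P}(X_n \geq x) - \mathbb{P}(X_n \geq x + \eta)$; the subtracted term is superexponentially smaller by monotonicity of $I$.

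\textbf{Lower bound via planting.} I would fix a ball $B \subset \mathbb{T}^d$ of diameter $r_n$, pick $\eta > 0$, and set $k_n = \lceil \sqrt{2(\delta + \eta)\mu_n} \rceil$. Conditioning on $|\chi_n(B)| = k_n$ forces a clique of $\binom{k_n}{2} \geq (\delta + \eta/2)\mu_n$ edges inside $B$, while the restriction $\chi_n|_{B^c}$ is independent of this count and its edge count concentrates around $\mu_n$ by a routine variance argument; so $\{|E| \geq (1+\delta)\mu_n\}$ holds with probability tending to one on $\{|\chi_n(B)| = k_n\}$. The planting cost is $\mathbb{P}(|\chi_n(B)| = k_n) = e^{-n\tau_n}(n\tau_n)^{k_n}/k_n!$, and using $n\tau_n = \mu_n/(2^{d-1} n)$ together with the a priori bound $p \leq 2 - \delta^*$, Stirling's formula gives $\log(k_n/(n\tau_n)) = \tfrac{2-p}{2} \log n \, (1+o(1))$, hence
\[
- \log \mathbb{P}\bigl(|\chi_n(B)| = k_n \bigr) = \sqrt{2(\delta + \eta) \mu_n} \cdot \tfrac{2-p}{2} \log n \, (1 + o(1)).
\]
Dividing by $\sqrt{\mu_n} \log n$ and letting $\eta \downarrow 0$ yields the lower bound $-I(\delta)$.

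\textbf{Upper bound via Theorem \ref{thm1}.} Theorem \ref{thm1} immediately gives $\mathbb{P}(|E| \geq (1+\delta)\mu_n) \leq (1 + o(1)) \mathbb{P}(F_n)$, and applying part (a) of $F_n$ with $S = A$ itself shows that $F_n$ entails the existence of a ball of diameter $r_n$ containing at least $\sqrt{2 \delta \mu_n}(1 - \varepsilon)$ points. I would then cover $\mathbb{T}^d$ with $O(r_n^{-d})$ balls of diameter $r_n(1 + 1/C)$ (for a large constant $C$) so that every ball of diameter $r_n$ lies inside one of them, take a union bound, and invoke the Chernoff estimate $\mathbb{P}(\mathrm{Poi}(\lambda) \geq k) \leq e^{-\lambda}(e\lambda/k)^k$. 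The same computation of $\log(k/(n\tau_n))$ as in the lower bound yields
\[
\log \mathbb{P}(F_n) \leq -\sqrt{2 \delta \mu_n}(1 - \varepsilon) \cdot \tfrac{2-p}{2} \log n \, (1 + o(1)),
\]
the union-bound prefactor contributing only $O(\log n) = o(\sqrt{\mu_n} \log n)$ since $\sqrt{\mu_n} \to \infty$. Letting $C \to \infty$ and $\varepsilon \downarrow 0$ gives the matching upper bound $\leq -I(\delta)$.

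\textbf{Main obstacle.} The structural content is already absorbed into Theorem \ref{thm1}: localization is exactly what allows the upper tail event to be reduced to a single ``dense ball'' event, precluding spread-out configurations which could in principle create extra edges through many smaller deviations distributed over the torus. With Theorem \ref{thm1} in hand, the remaining work is the Stirling-plus-Chernoff calculation and the routine LDP bookkeeping sketched above; neither is conceptually subtle, but one must verify that all error terms arising from $r_n$, the covering constant $C$, and the planting perturbation $\eta$ are genuinely $o(\sqrt{\mu_n}\log n)$, which is where the hypotheses \eqref{rcond} and \eqref{pdef} are used.
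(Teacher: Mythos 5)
Your proposal is correct and follows essentially the same route as the paper: both reduce the LDP to a pointwise tail asymptotic (the paper's Proposition~\ref{SuperExp}), prove the lower bound by planting $\approx\sqrt{2\delta\mu_n}$ points in a fixed ball of diameter $r_n$ and controlling the remaining edge count by a Chebyshev argument, and prove the upper bound by combining Theorem~\ref{thm1} with a union bound and a Poisson Chernoff estimate. The only implementation differences are that the paper runs the union bound over the $m^{d\tilde\tau_s}$ maximal clique sets of the $s$-graded model rather than over a deterministic covering of $\mathbb{T}^d$ by slightly enlarged balls, and plants an additive correction $\sqrt{2t\mu}+n^z$ rather than a multiplicative one $\sqrt{2(\delta+\eta)\mu_n}$; both variants are interchangeable and give the same rate $I$.
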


There are several important features to the two main theorems of this paper: first, both describe models in which the number of edges significantly {\em exceeds} its mean. The lower tail of the edge count --- i.e.~events of the form $\{ |E| < (1 - \delta) \mu_n\}$ --- is likely to satisfy Poisson-like statistics. Its large deviation principle is expected to hold with speed $\mu_n$, and no special combinatorial structure like the ``giant clique'' of Theorem \ref{thm1} should appear.

Before we go on, let us comment on the precise properties of the giant clique given by our two main theorems. Since the rate function of Theorem~\ref{thmLDP} is strictly increasing, we know that, conditional on $\{|E| > (1 + \delta) \mu_n\}$, the event
\[
\left\{(1 + \delta') \mu_n > |E| > (1 + \delta) \mu_n\right\}
\]
occurs with high probability (i.e.~probability at least $1- \varepsilon$) for any $\delta' > \delta$ and $n$ sufficiently large. Now, if we set $S = B$ in the first stipulation of Theorem~\ref{thm1}, we see that the ball $B$ of diameter $r_n$ makes up a clique of at least $\sqrt{2 \delta \mu_n}(1 - \varepsilon)$ vertices --- and therefore at least $\delta \mu_n ( 1- 2 \varepsilon)$ edges. Since $\varepsilon$ and $\delta' - \delta$ are arbitrarily close to zero, we find that the clique in $B$ has $\delta \mu_n + o(\mu_n)$ edges, whereas the rest of the graph has $\mu_n + o(\mu_n)$ edges. This formalizes our earlier claim that `almost all extra edges in the conditional model are between points in $B$.'

Theorem \ref{thm1} also gives information about the internal geometry of the giant clique. If we pick $S$ to be a proper convex subset of the ball $B$, we find that $|\chi_n(S)|$ proportional to $\sqrt{2\delta \mu_n}$ times the density of $S$ in $B$ (again, up to lower order corrections). We restricted $S$ to be convex in order to preclude pathological sets, such as sets which are sparse but of large measure (e.g.~generalized Cantor sets) or have boundaries that take up a large amount of space. It should be possible to replace convexity with a weaker assumption. That being said, probing the Poisson point process $\chi_n$ with convex $S \subset B$ is enough to establish that the {\em conditional} process, restricted to $B$, is distributed roughly uniformly, up to errors that vanish in comparison to $\sqrt{\mu_n}$.

Finally, we would like to say that there are no other large cliques in $G(\chi_n, r_n)$ conditioned on $\{|E| > (1+ \delta) \mu_n\}$; unfortunately, Theorem \ref{thm1} does not provide this result. Instead, we can only be sure that every other clique outside of
the ``exceptional'' set $B$ has $o(\sqrt{\mu_n})$ vertices, that is, much smaller than the largest clique.

\section{The $s$-Graded Model}\label{SGradedSection}
Henceforth in the manuscript, we will suppress the subscript $n$ and write $\chi$, $\mu$, $\tau$ and $r$ instead of $\chi_n$, $\mu_n$, $\tau_n$ and $r_n$.

We now present an approximation of the random geometric model which allows us to replace the Poisson point process with a sequence of independent Poisson random variables. To do this, we first discretize space, and then produce a semi-metric on the resulting ``cells'' that approximates the norm $\| \cdot \|$ on the unit torus. We call this the $s$-graded model.

For a positive integer $s$, define
\[
m :=  \lfloor s/r \rfloor,
\]
so that
\begin{equation*}
\frac{s}{r} - 1 \leq m \leq \frac{s}{r}.
\end{equation*}
This definition and \eqref{pdef} imply that 
\begin{equation}\label{eq:mDef}
m^d = n^{2 - p + o(1)},
\end{equation}
where the constant in the $o(1)$ depends on $s$. Let $T = \{1,2, \dots, m\}^d$. Pick $I = (i_1, i_2, \dots, i_d) \in T$, and define
\[
A_I = \left[\frac{i_1-1}{m},\frac{i_1}{m}\right] \times \dots \times \left[\frac{i_d-1}{m},\frac{i_d}{m}\right].
\]
The $A_I$'s partition the unit torus into $m^d$ cubes (ignoring sets of measure $0$), each of volume $1/m^d$, and therefore, $X_I = |\chi(A_I)|$ is a Poisson random variable of mean
\begin{equation}\label{eq:Ddef}
\mathcal{D} := \frac{n}{m^d}.
\end{equation}
We now define a semi-metric on $T$, induced by the norm on torus:
\begin{equation}
\rho(I,J) = \inf_{x \in A_I^o,\,  y \in A_J^o}  \left\lceil m \|  x - y  \|\right\rceil
\end{equation}
where the circles indicate the interiors of the sets. Note that the $\rho(\cdot,\cdot)$ is always an integer. Moreover, $\rho(I,J) = z$ if $z$ is the smallest integer such that some point in $A_I^o$ and some point in $A_J^o$ are less than $z$ away, measured in units of $1/m$, the side length of the cubes. We force the points to be in the interior to prevent ``trivialities'', such as two adjacent cells being distance $0$, since they share a boundary. Note that $\rho(\cdot,\cdot)$ does not satisfy the triangle inequality, and hence is only a semi-metric. To see this, consider $\mathbb{T}^5$ under the Euclidean norm, and the cells $A_1 = A_{(1,\dots, 1)}$, $A_2 = A_{(2,\dots,2)}$ and $A_3 = A_{(3,\dots,3)}$. Since $A_1$ and $A_2$ share a corner, $\rho(A_1, A_2) = 1$, and the same holds for $\rho(A_2,A_3)$. However, $\rho(A_1, A_3) = \sqrt{5} > 1 + 1 = \rho(A_1,A_2) + \rho(A_2,A_3)$. But $\rho$ does satisfy a modified triangle inequality of the form $\rho(I,J) \leq \rho(I,K) + \rho(K,J) + C_d$, where $C_d$ depends only on the dimension and choice of norm, though we never make explicit use of this fact.

We are now ready to define the $s$-graded random geometric graph. Let $G_s(\chi,r) = (V,E_s)$ have the same vertex set as the original graph. For each vertex $v$, let $I_v$ be the index in $T$ such that $v \in A_{I_v}$; there is ambiguity on the boundary of the $A_I$'s, but that set has Lebesgue measure 0, and therefore it has no vertices of $\chi$, almost surely. We say $(v,w) \in E_s$ whenever $\rho(I_v, I_w) \leq s$. Heuristically, the $s$-graded model allows every point to wander inside a cubical ``cage'' of side-length $1/m$, and connects any two points that might be connected after we allow this mobility. In this framework, it is clear that $E_s$ becomes smaller as $s$ decreases. In fact, for sufficiently large $s$, $E_s$ is identical to $E$; unfortunately, this $s$ will be random. In formulating Theorem \ref{thm2}, the main theorem of this section (which is proved in Section \ref{sec:ProbSec}), we will let $s$ be an arbitrary positive integer, and discuss its asymptotic properties as $n$ goes to infinity. Later, in Sections \ref{sec:Geometry} and \ref{LDPSection}, we will take $s$ to sufficiently large, and show that the resulting approximation is good enough for our purposes.

Having defined the $s$-graded model, we now need to compute several quantities related to it, as we did for the random geometric graph in Section \ref{MainResultsSection}. We will denote $s$-graded model variables with tildes, to distinguish them from similar variables defined by the continuous geometry of the $\mathbb{T}^d$. We will say $G_s(\chi,r) = (V,E_s)$ is admissible whenever the random geometric graph $G(\chi,r)$ is admissible.

The major benefit of the $s$-graded model is that its edge count is very simple to express in terms of $X_I$, the number of points in each $A_I$:
\begin{align}
|E_s| &= \sum_{I \in T} \, \left[{X_I \choose 2}  + \frac{1}{2} \sum_{J\, : \, 0 < \rho(I,J) \leq s} X_I X_J \right] \label{eq:DiscreteDef}\\
&=  \frac{1}{2}\sum_{I \in T} \,  X_I \left[\biggl(\displaystyle \sum_{J\, : \, \rho(I,J) \leq s} X_J\biggr) -1\right].\notag
\end{align}
This random variable is defined in terms of i.i.d.\ random variables, which eases the analysis greatly. The geometric relations that define the edge count are now completely encoded by $\rho$. Finally, each $X_I$ only appears in finitely many terms in this expression (i.e. the number of terms involving $X_I$ is uniformly bounded in $n$). The ``finite range'' nature of the representation will play a major role in the proof presented.

We quantify this fact as follows: for any $I \in T$, let 
\[
\tilde{N}_I := \{J: \rho(I,J) \leq s\}.
\]
Thanks to translation invariance of $\rho$, the cardinality of this set is independent of the choice of $I$. Using this parameter, we can compute the expected number of edges in the $s$-graded random geometric graph easily: 
\begin{align}
\tmu  &:= \mathbb{E}(|E_s|)\label{musdef}\\
&= \displaystyle \sum_{I \in T} \mathbb{E}\biggl[{X_I \choose 2} + \frac{1}{2}  \sum_{J\, : \, 0 < \rho (I,J) \leq s} \mathbb{E}(X_I)
\mathbb{E}(X_J)\biggr]\nonumber \\
&= \frac{\NI m^d \mathcal{D}^2}{2} = \frac{\NI n^2}{2m^d}\, , \nonumber
\end{align}
where we recall that $\mathcal{D}$ is the mean of $X_I$, and the defining relation \eqref{eq:Ddef}. The variance of $|E_s|$ is also straightforward to calculate from the above representation, though the exact formula is messy. Instead, we produce an upper bound: the variance of $|E_s|$ can be thought of as the sum of $(\mathbb{E}[Q_I \cdot Q_{I'}]- \mathbb{E}[Q_I]^2)$, where $Q_I$ is the summand in \eqref{eq:DiscreteDef} and $I,I' \in T$. This quantity is maximized when $I = I'$, and is zero if the two terms are independent. Thus, we find that 
\begin{align*}
\text{Var}[|E_s|] \leq \sum_{I \in T} |\tilde{S}_I| \cdot \mathbb{E}\left( \biggl[{X_I \choose 2} + \frac{1}{2}  \sum_{J\, : \, 0 < \rho (I,J) \leq s} X_I X_J - \frac{\NI \mathcal{D}^2}{2}\biggr]^2 \right),
\end{align*}
where
\[
\tilde{S}_I := \{ J : \tilde{N}_I \cap \tilde{N}_J \neq \emptyset \} .
\]
A straightforward (if elaborate) computation can show that this implies that
\begin{equation}\label{eq:VarianceEs}
\text{Var}[|E_s|] \leq 16 |\tilde{S}_I| \NI^2 m^d \cdot \max\{\mathcal{D}^3, \mathcal{D}^2\}. 
\end{equation}
In Lemma \ref{lmm3} below, we will show that both $\NI$ and $|\tilde{S}_I|$ are uniformly bounded in $n$. Together with \eqref{musdef} and \eqref{eq:Ddef}, this implies that, for any $s$, $\text{Var}[|E_s|] \ll \tmu^2$, and hence $|E_s|$ concentrates around its mean by Chebyshev's inequality.

As before, we are interested in conditioning the $s$-graded model on the event $\{|E_s| > (1 +\tdelta) \tilde{\mu}_s\}$. Following Theorem \ref{thm1}, we expect that such conditional measures will be concentrated on configurations with many points on sets of diameter $s$ and maximal cardinality. We call a set of indices a $\textit{maximal clique set}$ if it is a subset of $T$ with diameter $\leq s$
that achieves the maximal cardinality of all such sets. We define
\begin{equation}\label{tausdef}
\ttau  := \max \{ |\mathfrak{I}| : \mathfrak{I} \subset T, \text{diam}(\mathfrak{I}) \leq s \} \, ,
\end{equation}
i.e.~$\ttau$ is the cardinality of a maximal clique set. Clearly $\ttau$ is increasing in $s$, and 
\begin{equation}\label{eq:ttaubound}
\ttau \geq \tilde{\tau}_1 \geq 2^d,
\end{equation} 
as the diameter of the set $\{ I = (\eta_1, \dots, \eta_n) : \eta_i \in \{1,2\}\}$ under the semi-metric $\rho(\cdot,\cdot)$ is exactly $1$, as all the $A_I$'s share a corner. We will also need an approximate notion of this geometric object: we say a set is a $\teps$-almost maximal clique set if its diameter is bounded above by $s$, and its cardinality is at least $(1 - \teps) \ttau $.

We can now state the equivalent to Theorem \ref{thm1} for the $s$-graded model:
\begin{theorem}\label{thm2}
Let $s$ be a positive integer. Consider $G_s(\chi,r)$, an admissible $s$-graded random geometric graph. For any $\tdelta >0$, define the event  $\mathcal{L}_n(\tilde{\delta})$ by
\[
\mathcal{L}_n(\tdelta) := \{ |E_s| > (1 + \tdelta) \tmu \} \, .
\]
For any $\teps >0$,  let $\mathcal{G}_{n,\tdelta}(\teps)$ be the event there exists a pair of sets $\mathfrak{B}$ and $\mathfrak{C}$ in $T$ such that 
\begin{enumerate}
\item $\mathfrak{B}$ is a $\tilde{\varepsilon}$-almost maximal clique set,
\item for all $I\in \mathfrak{B}$,
\[
\biggl|\frac{\ttau X_I}{(2 \tdelta  \tmu )^{1/2}} - 1 \biggr| < \teps \, ,
\]
\item $\mathfrak{C}$ satisfies 
\[
|\mathfrak{C}| < \teps \cdot \ttau \quad \text{and} \quad \sum_{I \in \mathfrak{C}} X_I <  \teps  \cdot (2 \tdelta \tmu)^{1/2},
\]
and
\item for all $J \in (\mathfrak{B} \cup \mathfrak{C})^c$,
\[
\frac{\ttau X_J}{(2 \tdelta \tmu )^{1/2}} < \teps .
\]
\end{enumerate}
There is a universal constant $\teps_0>0$ such that the following is true. Take any $\teps \in (0,\teps_0)$, any positive integer $s$,  and any three numbers  $0<\tdelta_0\le \tdelta \leq \tilde{\Delta}_0$. Then there is an integer $n_0$ depending only on $s$, $\teps$, $\tdelta_0$ and $\tilde{\Delta}_0$, such that whenever $n\ge n_0$, 
\begin{align*}
&\mathbb{P}[ \mathcal{G}_{n,\tdelta}(\teps)^c \cap \mathcal{L}_n(\tdelta)] \\
&\le 3 \exp\left(-(2 \tdelta \tmu)^{1/2} \left[\log \left(\frac{(2 \tdelta \tmu)^{1/2}}{\ttau \cdot \mathcal{D}}\right) -1 + (\teps/10)^{10}/2  \right]  \right).
\end{align*}
\end{theorem}
Because of its technical nature, Theorem \ref{thm2} warrants a short explanation. It turns out that it is possible to show that, for some $\eta >0$ 
\[
\mathbb{P}[\mathcal{L}_n(\tilde{\delta})] \geq \exp\left(-(2 \tilde{\delta} \tilde{\mu}_s)^{1/2} \left[\log \left(\frac{(2 \tilde{\delta} \tilde{\mu}_s)^{1/2}}{\tilde{\tau}_s \cdot \mathcal{D}}\right) -1 \right] - C n^{p/2 - \eta}  \right).
\]
This bound comes from explicitly ``planting" a maximal clique set where every cell includes exactly $\lceil (2 \tdelta \tmu)^{1/2}/\ttau \rceil$ vertices; we will not prove this fact, but Lemma \ref{LowerBoundH} will show a very similar computation for the edge count of the random geometric graph. In Lemma \ref{lmm3} below, we will show that $\NI$ is uniformly bounded in $n$ for any $s >0$. Together with the fact that $\tdelta$ is uniformly bounded above and below in $n$, this implies that $(2 \tdelta \tmu)^{1/2} = n^{p/2 + o(1)} \gg n^{p/2 - \eta}$. Therefore, Theorem \ref{thm2} shows that, with high probability, the event $\mathcal{G}_{n,\tdelta}(\tilde{\varepsilon})$ occurs in the conditional $s$-graded model.

The event $\mathcal{G}_{n,\tdelta}(\teps)$ produces a set $\mathfrak{B}$, which is very close to a maximal clique set, in which each $X_I$ is very close to $\sqrt{2 \tdelta \tmu}/\ttau$ --- the value we would expect if we were to spread the $\sqrt{2 \tdelta \tmu}$ vertices required to make a ``giant'' clique evenly among the $\ttau$ elements of a maximal clique set. In addition, we allow for an ``exceptional" set $\mathfrak{C}$, where some $X_I$'s may be much larger than this average amount. However, this exceptional set is made up of few indices, and includes few vertices of $\chi$, when compared with $\sqrt{2 \tdelta \tmu}$. Outside of these two sets, every $X_J$ is at most $\teps \sqrt{2 \tdelta \tmu}/ \ttau$ --- a lower order quantity when compared to the bounds on $X_I, I \in \mathfrak{B}$.

When this event fires, the conditional $s$-graded model has a clique with approximately $\tilde{\delta} \tilde{\mu}_s$ edges. We also know that the vertices are distributed roughly uniformly. Finally, we get a  quantitative estimate on the probability that the edge count of the $s$-graded model exceeds its mean {\em without} the desired structure occurring. Note that the constants and 10th power of $\teps$ that appears in the quantitative bound are somewhat arbitrary --- we made no attempts to optimize them.

Suppose that $\tilde{\varepsilon} <(2\tilde{\tau}_s)^{-1}$. In this case $\mathcal{G}_{n,\tdelta}(\tilde{\varepsilon})$ would require $|\mathfrak{B}| \geq \tilde{\tau}_s -1/2$ and  $|\mathfrak{C}| \leq 1/2$ --- i.e.~$\mathfrak{C}$ is empty and $\mathfrak{B}$ is a true maximal clique set. Thus, Theorem \ref{thm2} can be used to show that the $s$-graded model conditioned on $\mathcal{L}_n(\tilde{\delta})$ will include a maximal clique set housing a clique of at least $\tilde{\delta} \tilde{\mu}_s(1 - o(1))$ edges. Unfortunately, the quantitative estimate on the probability of $\mathcal{G}_{n,\tdelta}(\tilde{\varepsilon})^c \cap \mathcal{L}_n(\tilde{\delta})$ in this case is not sufficiently good to deduce Theorem \ref{thm1}. This is the reason for the introduction of the $\tilde{\varepsilon}$-almost maximal clique sets, which allow us to deduce a stronger upper bound on the probability that $\mathcal{G}_{n,\tdelta}(\tilde{\varepsilon})$ does not occur --- at the price of dealing with more flexible geometric constructions.

\section{Outline of the Proof} \label{OutlineSection}
Before embarking on a proper proof, we sketch the main ideas required. We recall that $\delta^* >0$ is a fixed positive number and that $p$ is given by $\lim_{n \rightarrow \infty} \,  \log \mu/\log n$. We will define 
\[
a := \frac{\delta^*}{25}.
\]
Later, we will also pick two positive real numbers $\alpha, \beta$ as some quantities depending on $p$ and $a$. All of these quantities will be fixed throughout the paper. We further note that, for any admissible graph, $r \rightarrow 0$ as $n \rightarrow \infty$. For the remainder of the paper, we will take the statement ``$n$ is sufficiently large'' to imply that $r$ is sufficiently small.

We begin by carefully analyzing the $s$-graded model. We order the indices $I$ by the size of $X_I$, the point counts of the $A_I$'s. Explicitly, we pick a bijection from $T$ to $\{1, 2, \dots , m^d \}$ such that
\[
X_1 \geq X_2 \geq \dots \geq X_{m^d} \, . 
\]
For notational convenience, we set 
\[
q = (2 \tilde{\delta} \tilde{\mu}_s)^{1/2}, \ \ \ w = \tilde{\tau}_s \cdot \mathcal{D}.
\]
Picking $a$ as above, we set $M = \lceil \mathcal{D} \rceil \cdot n^{a}$, and let $\mathcal{T}_M$ be the greatest $I$ such that $X_{I} \geq M$.  We define
\[
\mathfrak{I} = \{1, 2, \dots, \mathcal{T}_M\} \, , \text{ ordered by size as above,}
\]
to be the set of indices whose associated point counts $X_I$ exceed their mean (corrected for integrality) by a fixed polynomial factor. Furthermore, define
\[
Y_I := X_I \left(\log (X_I /\mathcal{D}) -1\right) + \mathcal{D} \, ,
\]
and
\[
Q(\mathfrak{I}) := \frac{2}{q^2} \displaystyle \sum_{I \in \mathfrak{I}} \biggl[{X_I \choose 2} + \frac{1}{2} \displaystyle \sum_{\substack{J \in \tilde{N}_I \cap
\mathfrak{I}\\J\ne I}} X_I X_J\biggr] \, ,
\]
and 
\[
V(\mathfrak{I}) := \frac{1}{q} \displaystyle \sum_{I \in \mathfrak{I}} X_I.
\]
The first quantity is an appropriately chosen convex function of the $X_I$'s, while the second is a scaled version of the number of edges with both endpoints in the $A_I$'s associated with $\mathfrak{I}$, and the third controls the number of vertices in $\mathfrak{I}$.

Let $\xi >0$ be a fixed constant independent of $n$. Consider the event 
\[
\mathcal{H}_\xi = \left\{Q(\mathfrak{I}) \geq 1 - \frac{\xi}{\log n}\right\} \bigcap  \left\{\frac{1}{q} \displaystyle \sum_{I \in \mathfrak{I}} Y_I  \leq \log(q/w) -1 + \xi\right\}\, .
\]
The main probabilistic analysis of this paper occurs in two sections: the first uses large deviation estimates to control sums of i.i.d.~random variables, and the second employs concentration inequalities for more complicated functions. Together, this work allows us to show that, for sufficiently large values of $n$ and small values of $\xi$,
\[
\mathbb{P}[ \mathcal{H}^c_\xi \cap \mathcal{L}_n(\tilde{\delta})] \leq 3\exp\left(-q \left[\log \left(\frac{q}{w}\right) -1 + \xi/2  \right]  \right).
\]
It turns out that, if we set $\xi \leq (\teps/10)^{10}$, {\em any} $\mathfrak{I} \subset T$ that satisfies both the quadratic lower bound and the convex upper bound that define $\mathcal{H}_\xi$ (as well as a mild bound on $\mathcal{T}_M$ and $V(\mathfrak{I})$) contains in it a $ \teps$-almost maximal clique set $\mathfrak{B}$ and an exceptional set $\mathfrak{C}$ that satisfy the four stipulations of $\mathcal{G}_{n,\tdelta}(\tilde{\varepsilon})$! This nontrivial statement implies $\mathcal{G}_{n,\tdelta}(\tilde{\varepsilon})^c  \subset\mathcal{H}_\xi^c$, whenever $\xi \leq (\teps/10)^{10}$ -- and, in particular, $\{\mathcal{G}_{n,\tdelta}(\tilde{\varepsilon})^c \cap \mathcal{L}_n(\tdelta) \}\subset\{\mathcal{H}_\xi^c \cap \mathcal{L}_n(\tdelta)\}$. This proves Theorem \ref{thm2}.

The proof of the above implication is not straightforward, and we will deduce it in several steps. We emphasize that this is a completely deterministic property of configurations that satisfy a certain set of inequalities. The next two paragraphs sketch the argument used to prove this implication.

Set $\mathcal{T}_V$ to be 
\[
\mathcal{T}_V := \min\left\{ k : V(\{1 , \dots , k\}) > 1 - \frac{2\xi}{\log n} \right\} \, \text{ and } \, \mathfrak{T}:= \{1, \dots, \mathcal{T}_V\}.  
\]  
Careful use of minimality and Jensen's inequality proves that 
\[
V(\mathfrak{T}) \leq 1 + \phi(\mathcal{T}_V), \, \, \quad \, \, \, \, Q(\mathfrak{T}) \geq 1 - \psi(\mathcal{T}_V) \, ,
\]
where $\phi(\cdot)$ and $\psi(\cdot)$ are explicit functions, bounded above by $1/(\log n)^{1/2}$, that are non-increasing in their arguments. One of the difficulties we encounter is that we do not have good upper bounds on $\mathcal{T}_V$, and thus must have bounds that improve whenever the parameter grows.

We set $\mathcal{T}_P$ to be the greatest integer $I$ smaller than $\mathcal{T}_V$ that satisfies $X_{I} >  \xi q /\ttau$. Setting $\mathfrak{P} = \{1, 2, \dots , \mathcal{T}_P \}$, we now have a set of indices whose associated $X_I$'s are commensurate with $q$. We proceed to show that the diameter of $\mathfrak{P}$ cannot exceed $s$ without violating either the lower bound on $Q(\mathfrak{T})$ or the upper bound on $V(\mathfrak{T})$. Together with technical estimates that force $\mathcal{T}_P \geq \tilde{\tau}_s(1 - \xi^{1/3})$, we find that $\mathfrak{P}$ is an $\xi^{1/3}$-almost maximal clique set. Moreover, a quantitative version of Jensen's inequality allows us to break $\mathfrak{P}$ into $\mathfrak{B}$ and $\mathfrak{C}$, the required sets. Finally, we can show that $X_{\mathcal{T}_P + 1}$ vanishes sufficiently quickly to completes the proof of Theorem \ref{thm2}.

We then move on to proving that Theorem \ref{thm2} implies Theorem \ref{thm1}. To do so, we first show that we can approximate any convex subset $S$ of a ball of diameter $r$ from both the inside and the outside by a union of $A_I$'s using the tools of geometric measure theory. Next, we use the classical isodiametric inequality to show that the $A_I$'s associated with a $s^{-1/20}$-almost maximal clique set approximate a ball of diameter $r$, in the sense of the Hausdorff metric.

Next, we fix $\varepsilon >0$, and show that, for sufficiently large $s$ and $\tdelta \in [(1 - \varepsilon/16)\delta,\delta]$, the event $\mathcal{G}_{n,\tdelta}(s^{-1/20})$ will imply $\mathcal{F}_n(\varepsilon)$. We then apply Theorem \ref{thm2} with $\tdelta$ as above and $\teps = s^{-1/20}$ to get an upper bound on the probability of $\{\mathcal{F}_n(\varepsilon)^c \cap \mathcal{L}_n(\tdelta)\}$. Combining this bound with a good lower bound on the probability of $\{ |E| > (1 +\delta) \mu\}$ (to be derived directly from the Poisson point process) and a well-known correlation inequality gives Theorem \ref{thm1}.

The final section of the paper proves the large deviation principle of Theorem \ref{thmLDP}. We use the first stipulation of Theorem \ref{thm1} and the $s$-graded model to compute the upper bound.

\section{Analysis of the $s$-Graded Model}\label{sec:ProbSec}
In this section we analyze the $s$-graded model and prove Theorem \ref{thm2}. At the very beginning, let us now fix a positive integer $s$ and numbers $0< \tdelta_0\le \tdelta \leq \tilde{\Delta}_0$. We will figure out the universal constant $\teps_0$ later. Throughout, whenever we say ``$n$ sufficiently large'', we will mean ``$n\ge n_0$ for some $n_0$ that depends only on $s$, $\teps$, $\tdelta_0$, and $\tilde{\Delta}_0$''.

\subsection{Controlling the Natural Parameters of the $s$-Graded Model} 
The geometric properties of the $s$-graded model are not quite comparable to those of the random geometric graph; most obviously, the $s$-graded model has a discrete geometry induced by the semi-metric $\rho(\cdot, \cdot)$ on $T$. We begin with a very useful lemma, which tells us that the parameters of the $s$-graded model are close to their appropriate equivalents on $\mathbb{T}^d$. To do so, we define three operators: first, let $\mathfrak{U}$ send a set of indices to the union of their associated $A_I$'s --- that is, for any $\mathfrak{I} \subset T$,
\begin{equation}\label{UnionDef}
\mathfrak{U}(\mathfrak{I}) := \displaystyle \bigcup_{I \in \mathfrak{I}} A_I \, .
\end{equation}
In the other direction, we must be more careful. Let $K \subset \mathbb{T}^d$, we define $\mathfrak{R}(K)$ and $\mathfrak{O}(K)$ to be the maximal (resp. minimal) subsets of $T$ such that
\begin{equation}\label{InnerOuterHullDef}
\mathfrak{U}\left(\mathfrak{R}(K)\right) \subset K  \quad \text{and} \quad K\setminus K' \subset \mathfrak{U}\left(\mathfrak{O}(K)\right) \, ,
\end{equation}
where $K'$ is some subset of $K$ of Lebesgue measure $0$; this modification allows us to not deal with certain trivialities. We note that $\mathfrak{R}(K)$ may be empty, and $\mathfrak{O}(K)$ may be $T$, even when $K$ or $\mathbb{T} \setminus K$ are nonempty. Alternatively, we may define $\mathfrak{O}(K)$ by
\[
\mathfrak{O}(K) := \{I \in T : \lambda( K \cap \mathfrak{U}(I)) > 0 \}.
\]

We recall several definitions: $\mu = \mathbb{E}(|E|)$, $\tilde{\mu}_s = \mathbb{E}(|E_s|)$ and $\tau = \nu (r/2)^d$. We set $\NI$ to be the number of indices satisfying $\rho(I,J)\leq s$, and $\tilde{S}_I =  \{ J: \tilde{N}_I \cap \tilde{N}_J \neq \emptyset \}$. Finally, $\ttau$ is the cardinality of a maximal clique set (as defined in \eqref{tausdef}). 
\begin{lem}\label{lmm3}
We have $E\subset E_s$, and there exist constants $C$, $s_0$, and $n_0$ depending only on the dimension and the chosen norm of the torus, such that, if $s\ge s_0$ and $n \geq n_0$, then
\[
\mu \leq \tilde{\mu}_s \leq \mu \left(1 + \frac{C}{s}\right)
\]
and
\[
m^d \tau \leq \tilde{\tau}_s \leq m^d \tau \left(1 + \frac{C}{s}\right)
\]
Furthermore, $\NI$, $|\tilde{S}_I|$, and $\tilde{\tau}_s$ are uniformly bounded in $n$. 
\end{lem}
In this section, we will only use this lemma to establish that certain quantities are uniform in $n$; in Section \ref{sec:Geometry}, we will strongly use the fact that the estimates become tight as $s$ grows.

\begin{proof}
Pick an arbitrary $I$ and consider $\mathfrak{U}(\tilde{N}_I)$. By definition of $\rho(\cdot,\cdot)$ and $s$, this set includes a ball of radius $r$ around $\textit{any}$ point in $A_I$. Therefore, any pair $(v,w) \in E$ must also be in $E_s$, giving the first stipulation. Since this inclusion holds for any configuration of the underlying Poisson Point process, this also gives $\mu \leq \tilde{\mu}_s$.

Now, define $\varsigma$ to be the diameter of the unit cube under the norm $\| \cdot \|$ - that is,
\begin{equation}\label{DiameterDefinition}
\varsigma := \sup_{x,y \in [0,1]^d} \|x - y \|.  
\end{equation}
Fix $I$, and let $x$ and $y$ be two fixed points in $A_I$ and $\mathfrak{U}(\tilde{N}_I)$, respectively. Letting $J$ be an index for which $y \in A_J$, we pick arbitrary points $z$ and $w$ in $A_I$ and $A_J$, respectively. Then the triangle inequality for $\| \cdot \|$ implies that
\[
\| x - y\| \leq \|x - z\| + \|z - w\| + \|w - z\| \leq \|z - w\|\ + \frac{2 \varsigma}{m} \, ,
\]
where we bound the first and last terms by $\varsigma/m$ using scaling of the norm. Since $z$ and $w$ are arbitrary, we can take an infimum over all choices of $z$ and $w$ in $A_I^\circ$ and $A_J^\circ$, respectively, and conclude that
\[
\| x- y\| \leq \frac{s + 2 \varsigma}{m} \leq \frac{ (s + 2\varsigma)r}{s-r} \, ,
\]
where we use that $m \geq s/r - 1$, by definition. Therefore, $\mathfrak{U}(\tilde{N}_I)$ is contained in a ball of radius $r(1 + 3\varsigma/s)$ around any point in $A_I$, for sufficiently large value of $s$ and $n$ (recalling that $r$ is vanishing in $n$). Since each $A_I$ is of measure of $m^{-d}$, we deduce that
\[
\NI = m^d \lambda\left(\mathfrak{U}(\tilde{N}_I)\right) \leq \nu m^d r^d \left(1 + \frac{3 \varsigma}{s}\right)^d \leq \nu m^d r^d \left(1 + \frac{6 d \varsigma}{s} \right) \, ,
\]
where the final inequality follows because $(1 +x)^d \leq 1 + (2d)x$ for all sufficiently small $x$. Substituting this into the definition of $\tilde{\mu}_s$ produces the desired inequality on $\tilde{\mu}_s$. Repeating a similar analysis will show that the set $\mathfrak{U} \left(\{J : \tilde{N}_I \cap \tilde{N}_J \neq \emptyset\}\right)$ is a subset of some ball of radius $2r(1 + 3 \varsigma/s)$, and thus 
\[
|\tilde{S}_I| \leq \nu m^d (2r)^d \left(1 + \frac{6d \varsigma}{s}\right).
\]

Next, we wish to control $\tilde{\tau}_s$. For the lower bound, let $B \subset \mathbb{T}^d$ be an arbitrary ball (in $\| \cdot \|$) of diameter $r$. Consider $\mathfrak{O}(B)$. By minimality, $\lambda(A_I \cap B) >0$ for every $I \in \mathfrak{O}(B)$. Therefore, 
\[
\max_{I,J \in \mathfrak{O}(B)} \left[\inf_{x \in A_I^o, y \in A_J^o}  \| x - y\| \right] \leq r  \, , 
\]
which implies, by the definition of $\rho(\cdot, \cdot)$, that the diameter $\mathfrak{O}(B)$ is at most $s$. Meanwhile, by inclusion, and the fact that $\lambda(A_I) = 1/m^d$ for every $I$,
\[
|\mathfrak{O}(B)| > m^d \lambda(B) = m^d \tau \, ,
\]
completing the lower bound.

For the upper bound, pick any $\mathfrak{W} \subset T$ such that 
\[
\lambda \left(\mathfrak{U}(\mathfrak{W})\right) \geq \tau \left(1 + \frac{ C}{s} \right)
\]
Applying the isodiametric inequality for finite dimensional normed spaces \cite[p. 93]{Burago} and choosing $C$ and $s_0$ sufficiently large gives
\[
\text{diam}(\mathfrak{U}(\mathfrak{W})) \geq r \left(1 + \frac{ C}{s} \right)^{1/d} \geq r\left(1 +  \frac{4 \varsigma}{s-r} \right) \, . 
\]
This implies that the diameter of $\mathfrak{W}$ is at least $s+1$. Therefore, any set $\mathfrak{W}$ of diameter at most $s$ must satisfy $\lambda \left(\mathfrak{U}(\mathfrak{W})\right) < \tau (1 + c/S)$, and 
\[
|\mathfrak{W}| = m^d \cdot \lambda \left(\mathfrak{U}(\mathfrak{W})\right)  \leq m^d \tau \left(1 + \frac{ C}{s} \right) \, ,
\]
as required.

The uniform bounds on $\NI$, $|\tilde{S}_I|$, and $\tilde{\tau}_s$ follow from $m^d \leq s^d /r^d$ and the above formulae.
\end{proof}

An immediate corollary to this theorem is that, assuming \eqref{pdef}, $\tmu = n^{p + o(1)}$. 

\subsection{Large Deviation Estimates}
The probabilistic bounds we need in this work are divided into two parts. The first involves good control on the deviation of sums of i.i.d.\ random variables. Our main tools here will be Chernoff bounds, as well as exact lower bounds.

Recall from Section \ref{OutlineSection} that $q = (2 \tilde{\delta} \tilde{\mu}_s)^{1/2}$, $w = \tilde{\tau}_s \cdot \mathcal{D}$, and $\mathcal{L}_n(\tilde{\delta}) = \{|E_s| > (1 + \tilde{\delta}) \tilde{\mu}_s\}$. By our assumptions on $\tdelta$, \eqref{eq:Ddef}, \eqref{musdef}, and Lemma \ref{lmm3}, we have that $q = n^{p/2 +o(1)}$ and $w = n^{p-1 +o(1)}$. Since $p/2 > p-1$ for any admissible $s$-graded model, we can increase $n$ to ensure that $q > 3 w$. We will assume this inequality for the rest of the paper.

We begin by recalling some classical bounds on the Poisson distribution (for proof, see \cite[pg. 35]{Dembo}, for example):
\begin{lem}\label{ChernoffBounds}
Let $X_I$ be a Poisson random variable with mean $\mathcal{D}$. Then, for any $t > \mathcal{D}$, 
\[
\mathbb{P}[X_I \geq t] \leq \exp (- t [\log (t/\mathcal{D}) -1] - \mathcal{D}) \, ,
\]
and for any $t < \mathcal{D}$, 
\[
\mathbb{P}[X_I \leq t] \leq \exp (- t [\log (t/\mathcal{D}) -1] - \mathcal{D}) \, . 
\]
\end{lem}
These bounds, which are given by explicitly computing exponential moment generating functions, are tight up to polynomial factors, and will be very important in the nearly exact computations we do in the proceeding lemmas.

We now define a random ordering of $T$ according to the $X_I$. Specifically, we pick a bijection from $T$ to $\{1, 2, \dots, m^d\}$ such that
\[
X_1 \geq X_2 \geq \dots \geq X_{m^d} \, .
\]
This bijection is not unique, as each $X_I$ is integer-valued, and there may be many $I$'s whose associated $X_I$'s are equal. However, all the statements will be true independently of the particular choice of bijection. Next, fix $a = \delta^*/25$, and define $M$ by
\begin{equation}\label{eq:DefinitionofM}
M := \lceil \mathcal{D} \rceil \cdot n^a \, . 
\end{equation}
The number $M$ is defined so to be a threshold of density for $X_I$ --- if $X_I < M$, we say it is in the {\em bulk} of the graph. We expect that, even conditional on $\mathcal{L}_n(\tilde{\delta})$, most indices $I$ will be in the bulk. To formalize this, we let 
\[
\mathcal{T}_M := \max \{ I : X_I \geq M \} \, .
\]
The next proposition controls the tail of $\mathcal{T}_M$:
\begin{pro}\label{CardinalityBoundPro}
Let
\[
\alpha = \min\{1 - p/2 - a/2,\, p/2 - a/2\} \, . 
\]
Let $\mathcal{A}$ be the event $\{ \mathcal{T}_M \geq n^{\alpha} \}$. Then, for all sufficiently large $n$,
\[
\mathbb{P}[\mathcal{A}] \leq  \exp\left(-n^{ p/2 + a/3}\right) \, . 
\]
\end{pro}
The number $\alpha$ will remain fixed to the value above for the remainder of the paper. We note that $\alpha < 2 -p$ for any admissible $s$-graded model, and therefore $n^{\alpha} \ll m^d$ (using \eqref{eq:mDef}). Thus, we find that, with very high probability, the complement of the bulk takes up a vanishing proportion of $T$.
\begin{proof}
The event $\mathcal{A}$ implies the existence of some $\mathfrak{W} \subset T$ such that, for all $I \in \mathfrak{W}$, $X_I > M$, and $|\mathfrak{W}| > \lceil n^{\alpha} \rceil$. By the union bound,
\[
\mathbb{P}[\mathcal{A}] \leq { m^d \choose \lceil n^{\alpha} \rceil} \cdot \mathbb{P}[X_I > M]^{n^{\alpha}} \, . 
\]
Using the upper tail bound in Lemma~\ref{ChernoffBounds} and the brutal bound ${m \choose k} < m^k$, this implies that
\begin{align*}
\mathbb{P}[\mathcal{A}] & \leq m^{d (n^{\alpha} +1)} \cdot \exp( - n^{\alpha} M \left[\log (M/\mathcal{D}) -1\right]) \\ & \leq  \exp \left( d (n^{\alpha} +1) \log m- n^{\alpha + a } \lceil \mathcal{D} \rceil \right) \, . 
\end{align*}
Since $\log m$ is bounded above by $C \log n$ for some uniform constant $C$ (by Lemma \ref{lmm3}), we can increase $n$ to ensure that 
\[
\mathbb{P}[\mathcal{A}] \leq \exp\left( - \frac{n^{\alpha + a} \cdot \lceil \mathcal{D} \rceil}{2}\right) \, . 
\]
If $\mathcal{D} \leq 1$, then the ceiling function is 1 and $\alpha = p/2 - a/2$. Increasing $n$ until $n^{p/2 + a/2}/2 > n^{p/2 + a/3}$ completes this case. If $\mathcal{D} >1$, then we bound $\lceil \mathcal{D} \rceil$ by $\mathcal{D}$ itself. By definition, 
\[
\mathcal{D} n^{\alpha + a} = n^{ \min\{3p/2 -1 + a/2, p/2 + a/2\} + o(1)}.
\]
In the case $p \geq 1$, the exponent is always minimized by the second choice. This completes the proof.
\end{proof}

The second estimate of this section will be used to control the behavior of the elements outside the bulk. Define
\[
Y_I = X_I \big(\log (X_I/\mathcal{D}) -1\big) + \mathcal{D} \, ,
\]
with the convention that $0 \cdot \log 0 = 0$. Note that $Y_I = \mathcal{I}(X_I)$, where $\mathcal{I}$ is the rate function of a Poisson random variable of mean $\mathcal{D}$. This implies that $Y_I \geq 0$ and vanishes only at $\mathcal{D}$. $\mathcal{I}$ is a convex function, and thus we can bound the sum of the $Y_I$'s by a function of the sum of the $X_I$'s, using Jensen's inequality. Furthermore, $\mathbb{P}[Y_I > t]$ should vanish as $\exp(-t)$, by ``inverting'' the rate function. We formalize this notion in the lemma below:
\begin{lem}\label{ExpMomentGeneratingY}
For any $\mathcal{D}$, and any positive $\lambda <1$,
\[
\mathbb{E}\left[\exp(\lambda Y_I)\right] \leq \frac{1+ \lambda}{1 - \lambda}.
\]
\end{lem}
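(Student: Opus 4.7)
The approach is to combine the classical Chernoff tail bounds \eqref{ChernoffUp}--\eqref{ChernoffDown} with a bucketing of the range of $Y_I$ at the scale $1/\log\mathcal{D}$. Viewed as a function of $X_I$, the variable $Y_I$ is precisely the (non-negative, strictly convex) Poisson rate function evaluated at $X_I$, attaining its minimum $0$ at $X_I=\mathcal{D}$. Hence for any $u\geq 0$ the event $\{Y_I\geq u\}$ decomposes as $\{X_I\geq x_+(u)\}\cup\{X_I\leq x_-(u)\}$, where $x_+(u)\geq\mathcal{D}$ and $x_-(u)\leq\mathcal{D}$ are the (at most two) preimages of $u$ under $x\mapsto Y_I(x)$, with $x_-(u)$ omitted when $u>\mathcal{D}$. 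Applying \eqref{ChernoffUp} to the upper half and \eqref{ChernoffDown} to the lower half yields the key one-sided tail bound
\[
\mathbb{P}[Y_I\geq u]\leq 2e^{-u},\qquad u\geq 0.
\]

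With this estimate in hand, set $\eta:=1/\log\mathcal{D}$ (valid for $\mathcal{D}>e$; the bounded-$\mathcal{D}$ case is trivial after enlarging $C$) and slice $[0,\infty)$ into the intervals $[j\eta,(j+1)\eta)$, $j\geq 0$. On each slice $e^{\lambda Y_I}\leq e^{\lambda(j+1)\eta}$, and the mass of the slice is at most $\mathbb{P}[Y_I\geq j\eta]\leq 2e^{-j\eta}$. Summing gives a geometric series in $e^{(\lambda-1)\eta}$:
\begin{align*}
\mathbb{E}[e^{\lambda Y_I}]&\leq\sum_{j=0}^\infty 2 e^{\lambda(j+1)\eta}e^{-j\eta}\\
&=2e^{\lambda\eta}\sum_{j=0}^\infty e^{(\lambda-1)j\eta}=\frac{2e^{\lambda\eta}}{1-e^{(\lambda-1)\eta}}.
\end{align*}
Since $\lambda<1$ and $\eta>0$ the series converges. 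Bounding the prefactor $2e^{\lambda\eta}\leq 2e$ and substituting $\eta=1/\log\mathcal{D}$ gives exactly the claimed form $C/(1-\exp(C(\lambda-1)/\log\mathcal{D}))$ after absorbing constants.

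The argument is mechanical once the Chernoff tail bound is written down, and I do not anticipate any real obstacle. The one interpretive point is the choice of bucket width: a coarser $\eta$ of unit order would actually produce the sharper bound $C/(1-\lambda)$, but the finer scale $\eta=1/\log\mathcal{D}$ in the statement is presumably tuned to match the downstream Cram\'er-style analysis of $\sum_{I}Y_I$, where a multiplicative factor of $\log\mathcal{D}$ is harmless against the principal exponent on the scale $\sqrt{\mu}\log n$.
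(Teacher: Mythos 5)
Your proof is correct and, at the level of ingredients, follows the paper's strategy: establish the one-sided tail $\mathbb{P}[Y_I\ge u]\le 2e^{-u}$ from the two inverse branches of the Poisson rate function together with \eqref{ChernoffUp}--\eqref{ChernoffDown}, then sum a geometric series. The difference is in how the series is organized, and your version is the more robust one. The paper bounds $\mathbb{E}[e^{\lambda Y_I}]$ by $\sum_{i\in\mathcal{A}}2e^{(\lambda-1)i}$, where $\mathcal{A}$ is the set of atom values of $Y_I$, and then controls this sum by a claim about the minimal spacing of those atoms (unit spacing on average when $\mathcal D$ is small, $C/\log\mathcal D$ when $\mathcal D$ is large). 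Your argument slices the range $[0,\infty)$ into fixed-width windows $[j\eta,(j+1)\eta)$ and bounds the probability mass of each window directly by the tail, never reasoning about where the atoms sit. That matters: when $\mathcal D$ is large, the atoms of $Y_I$ coming from $X_I$ near $\mathcal D$ are spaced $O(1/\mathcal D)$ apart --- far finer than $C/\log\mathcal D$ --- so a literal atom-by-atom application of $\mathbb{P}(Y_I=i)\le 2e^{-i}$ piles up $\Theta(\sqrt{\mathcal D})$ terms of size nearly $1$ near $i=0$. Your windowing sidesteps this by never charging a window more than its total probability, which the tail bound controls; the bound on each slice respects that the probabilities sum to one. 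Two minor points: the choice $\eta=1/\log\mathcal D$ is only meaningful for $\mathcal D>1$, and for bounded or vanishing $\mathcal D$ (the regime $p<1$) you should simply take $\eta=1$, which is effectively what the paper's $p<1$ branch does and yields the sharper $C/(1-e^{\lambda-1})$; and your observation that the finer scale $1/\log\mathcal D$ is the one the statement records, because that is what is needed in the Chernoff step of Lemma~\ref{YIBound}, is accurate.
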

\begin{proof}
The function $\mathcal{I}(x) = x [\log (x/\mathcal{D}) - 1] + \mathcal{D}$ is not invertible, but is piecewise invertible. First, let
\[
g_1(x): [0,\mathcal{D}] \rightarrow [0,\mathcal{D}] \text{ be a function such that } (\mathcal{I} \circ g_1) (x) = x\, .
\]
Note that this function is decreasing, with $g_1(0) = \mathcal{D}$ and $g_1(\mathcal{D}) = 0$. For any $x > \mathcal{D}$, we say that $g_1(x) = -\infty$. We define $g_2$, the second inverse, similarly, except its domain is defined to be $(\mathcal{D},\infty)$. This inverse is strictly increasing. Thus,
\[
\mathbb{P}[Y_I \geq t] =  \mathbb{P}[X_I \leq g_1 (t)] +  \mathbb{P}[X_I \geq g_2 (t)] \, . 
\]
By appealing to the two bounds of Lemma~\ref{ChernoffBounds}, we find that both the probabilities above are bounded above by $\exp(-t)$; in fact, if $t > \mathcal{D}$, the first probability is identically zero. Regardless, it will suffice to use the bound $\mathbb{P}[Y_I > t] < 2 e^{-t}$. Thus, for any $\lambda <1$,
\begin{align*}
\mathbb{E}[\exp(\lambda Y_I)] & = 1+\int_1^\infty \mathbb{P}\left[Y_I > \frac{\log t}{\lambda}\right] dt \\ & \leq 1+ 2 \int_1^\infty t^{-1/\lambda} dt \\ & = \frac{1+\lambda}{1 - \lambda}, 
\end{align*}
as required.
\end{proof}

We now uses the lemma to control the upper tail of the sum of the $Y_I$'s over {\em any} sufficiently small subset of $T$. We will only apply the proposition below on the set $\{1, 2, \dots, \mathcal{T}_M\}$ (which will be small with good probability from Proposition \ref{CardinalityBoundPro}), but it is actually more straightforward to consider the existence of a subset with bad properties, in order to avoid conditional probabilities. 

\begin{pro}\label{YIBound}
Let $Y_I$ as above, and $\alpha$ as in Proposition \ref{CardinalityBoundPro}.  Define the event
\[
\mathcal{B}_\xi := \biggl\{ \exists \mathfrak{W} \subset T, |\mathfrak{W}| \leq n^{\alpha} \text{ such that } \displaystyle \sum_{I \in \mathfrak{W}} Y_I > q (\log(q/w) -1) + \xi q  \biggr\}\, .
\]
Then, for all sufficiently large $n$,
\[
\mathbb{P}[\mathcal{B}_\xi] \leq \exp( - q [\log(q/w) -1 + \xi/2] ) \, .
\]
\end{pro}
\begin{proof}
Set $t = q (\log(q/w) -1 + \xi)$. Fix $\mathfrak{W} \subset T$ with cardinality at most $n^{\alpha}$. By Chebyshev's inequality
\begin{align*}
\mathbb{P}\Big[\displaystyle \sum_{I \in \mathfrak{W}} Y_I > t\Big] & \leq \frac{\big(\exp(\lambda Y_I)\big)^{n^\alpha}}{\exp(\lambda t)} \\ &\leq \left(\frac{1+ \lambda}{1 - \lambda} \right)^{n^\alpha} \cdot e^{-\lambda t} \, ,
\end{align*}
where the second inequality is Lemma~\ref{ExpMomentGeneratingY}.

We now set $\lambda = 1 - n^{\alpha} /{t}$, noting that, for sufficiently large $n$, $\lambda >0$ (since $n^{\alpha} << q$). This turns the above estimate into 
\begin{align*}
\mathbb{P}\Big[\displaystyle \sum_{I \in \mathfrak{W}} Y_I > t\Big] & \leq \left( 2t / n^{\alpha} \right)^{n^{\alpha}} \cdot e^{-t + n^{\alpha}} \\ & \leq \exp\left(- t + C n^{\alpha} \log n\right)\, .
\end{align*}
The final step is to apply the union bound:
\begin{align*}
\mathbb{P}[\mathcal{B}] & \leq \displaystyle \sum_{k=1}^{\lfloor n^{\alpha} \rfloor} {m^d \choose k} \cdot e^{-t + C n^{\alpha} \log n} \\ 
& \leq n^{\alpha} {m^d \choose n^{\alpha}} \cdot e^{-t + C n^{\alpha} \log n} \\ 
& \leq m^{d (n^{\alpha} +1)} \cdot e^{-t + C n^{\alpha} \log n}  \, . 
\end{align*}
Recalling Lemma \ref{lmm3}, we see that the combinatorial term in the final inequality is bounded above by $\exp( C n^{\alpha} \log n)$ for some (probably different) $C$. Since $q >> n^\alpha \log n$, the entire positive contribution can be bounded above by $\xi q/2$. This completes the proof.
\end{proof}

\subsection{Concentration Inequalities}
This section will prove concentration of the edge count of the random geometric graph {\em restricted to the bulk}. Explicitly, let 
\[
\hat{X}_I := X_I \cdot  1_{X_I < M}
\]
and $\hat{|E_s|}$ be the define analogously with $|E_s|$ by replacing $X_I$ with its truncated version (recall that $M = \lceil \mathcal{D} \rceil \cdot n^a$). In other words, $\hat{|E_s|}$ is the version of the edge count of $G_s$ obtained after deleting all vertices lying in $A_I$'s that satisfy $X_I \geq M$.

For the rest of the paper, fix 
\[
\beta = p - 2a.
\]
Consider the event 
\begin{equation*}
\mathcal{C} = \big\{ |\hat{E}_s| - \tilde{\mu}_s > n^\beta\big\}\, .
\end{equation*}
We control the probability of $\mathcal{C}$ in two regimes. We begin by assuming that $\mathcal{D} < \log n$.

Our strategy for proving an upper bound on $\mathcal{C}$ in this regime relies on Talagrand's convex concentration inequality \cite[Theorem 4.1.1]{Talagrand2}. First, let us define the setting: let $\Omega = \prod_{i=1}^N \Omega_i$, where $\Omega_i$ are all probability spaces and the measure $\mathbb{P}$ on $\Omega$ is the product measure. For a set $A \subset \Omega$, define the set
\[
U_A(x):= \{ \{s_i\} \in \{0,1\}^N : \exists y \in A, s_i = 0 \implies x_i = y_i\}\, .
\]
Let $V_A(x)$ be the convex hull of $U_A(x)$, and $d_c(A,x)$ is the $\ell^2$ distance of $V_A(x)$ to the origin. For any set $A$, we denote $A_t$ be the $t$ blowup of $A$ with respect to this metric, i.e.
\[
A_t := \{ x \in \Omega: d_c(A,x) \leq t\}\,.
\]
We can now state the inequality: 
\begin{theorem}[Talagrand's Inequality \cite{Talagrand2}]
If $\Omega$, $\mathbb{P}[\cdot]$, $A$ and $A_t$ are as above, then
\[
\mathbb{P}[A] \left(1 - \mathbb{P}[A_t]\right) \leq e^{-t^2/4} \, .
\]
\end{theorem}
We will not apply this theorem directly; instead, we use a corollary of this theorem frequently used in discrete settings \cite[Theorem 7.7.1]{Spencer}. To do so, we consider a random variable $X$ defined on the space $\Omega$, and a function $f$ from the natural numbers to the natural numbers. We say that $f$ is a witness function for $X$ if, whenever $X(\omega) \geq t$, there exists $I \subset [n]$ with $|I| \leq f(t)$, such that every $\omega'$ that agrees with $\omega$ in all $i \in I$ has $X(\omega') \geq t$. Furthermore, we assume that $X(\omega)$ is $K$-Lipschitz with respect to the Hamming distance --- that is, $|X(\omega) - X(\omega')| \leq K$ whenever $\omega$ and $\omega'$ differ in at most one coordinate.
\begin{theorem}[\cite{Spencer}]\label{DiscreteTalagrand}
Let $\Omega$ be a product space, and $X$ a real valued function on $\Omega$ with Lipschitz constant $K$ with respect to the Hamming distance. If $f$ is witness function for $X$ as above, then, for any $b$ and $t$,
\[
\mathbb{P}[X > b + t K\sqrt{f(b)}] \, \mathbb{P}[X \le b ] \leq \exp (-t^2/4)\, .
\]
\end{theorem}
With this preliminary complete, we will prove the following lemma:
\begin{lem}\label{ConcIneqSmall}
Let $\mathcal{C}$ be as above, and assume that $\mathcal{D} < \log n$. Then, for all sufficiently large $n$,
\[
\mathbb{P}[\mathcal{C}] \leq \exp\left( -  n^{2 \beta - p - 6a}\right) \, . 
\]
\end{lem}
\begin{proof}
Thanks to our assumption on $\mathcal{D}$, $M < n^a \log n$. %Therefore, $\hat{|E_s|}$ has a Lipschitz constant of $C n^{2a}$ in every coordinate, with respect to the Hamming metric.
We now apply Theorem~\ref{DiscreteTalagrand} to $X=\hat{|E_s|}$, considered as a function of the $X_I$'s. Since each coordinate is bounded above by $n^a \log n$, $X$ is Lipschitz with $K \leq \NI n^{2a} \log^2 n$. The function $f(w) = 2w$ is a  witness function for $\hat{|E_s|}$; to see this, note that $\hat{|E_s|}$ is the edge count of the $s$-graded geometric random graph, after we remove any $X_I$ with very high density. As such, we can ``witness'' the existence of $w$ edges by finding at most $2w$ vertices; the flexibility of the setup allows us to pick these vertices judiciously, avoiding all the isolated ones. Finding $2w$ vertices will require at most $2w$ distinct coordinates, if each one of them vertices lies in a distinct~$A_I$. Note that this bound may be very loose -- whenever $2w > m^d$, we can easily just check {\em every} $A_I$ to witness $\hat{|E_s|} > w$.

We apply the theorem with $b = \tilde{\mu}_s + n^{\beta}/\log n $ and
\[
t = \frac{n^{\beta}(1 - 1/\log n)}{ \NI n^{2a} \log^2 n \cdot  [2\tilde{\mu}_s + 2 n^\beta/ \log n]^{1/2}}\, .
\]
We deduce that 
\begin{align}\label{eq:TalagrandBoundSmall}
\mathbb{P}[\mathcal{C}] \cdot  \mathbb{P}[\hat{|E_s|}  \leq \tilde{\mu}_s + n^{\beta}/\log n] & \leq \exp\left(-\frac{n^{2\beta}\left[1 - 1/\log n\right]^2 }{8 \NI^2  n^{4a}\log^4 n [\tilde{\mu}_s + n^{\beta}/\log n]} \right) \nonumber \\ & \leq  \exp\left(-n^{2 \beta - p -5a} \right)\, ,
\end{align}
where the final inequality holds for sufficiently large $n$, using the fact that $\tilde{\mu}_s = n^{p +o(1)} >> n^{\beta}/\log n$ and the fact that $\NI$ is uniformly bounded in $n$. 
%Next, assume that $\tilde{\mu}_s + n^\beta/\log n \geq m^d$. Recalling the definition of $\tilde{\mu}_s$ in \eqref{musdef}, this implies that $p \geq 1$; since $\mathcal{D} < \log n$ by assumption, $p =1$, and $m^d = n^{1 + o(1)}$. In this case, Talagrand's inequality gives that 
%\begin{align}\label{eq:TalagrandaBoundLarge}
%\mathbb{P}[\mathcal{C}] \mathbb{P}[\hat{|E_s|}  \leq \tilde{\mu}_s + n^{\beta}/\log n] & \leq \exp\left(-\frac{n^{2\beta}\left[1 - 1/\log n\right]^2 }{4 (|N_I|+1)^2  n^{4a}\log^2 n  \cdot m^d} \right) \\ & \leq \exp \left(- n^{2 \beta -1 - 5a} \right). \nonumber
%\end{align}
%This is the required bound for the case $p=1$.

To complete the proof, we must show that $\mathbb{P}[\hat{|E_s|}  \leq \tilde{\mu}_s + n^{\beta}/\log n]$ is not too small. Since the mean of $\hat{|E_s|}$ is strictly smaller than the mean of $|E_s|$, it is enough to show that
\[
\mathbb{P}[\hat{|E_s|} - \mathbb{E}[\hat{|E_s|}] \geq n^{\beta}/\log n] < \varepsilon.
\]
We will produce a very crude bound on the variance of $\hat{|E_s|}$: let $\hat{Z}_I = \hat{X}_I \left(\sum_{J \in \tilde{N}_I} \hat{X}_J - 1 \right)$. Then clearly,
\begin{align*}
\text{Var}[\hat{|E_s|}] & = \sum_{I,J} \mathbb{E}\left[\left(\hat{Z}_I - \mathbb{E}[\hat{Z}_I]\right)\left(\hat{Z}_J - \mathbb{E}[\hat{Z}_J]\right) \right] \\ & \leq \sum_{I} |\tilde{S}_I| \cdot \mathbb{E}\left[\left(\hat{Z}_I - \mathbb{E}[\hat{Z}_I]\right)^2\right]
\end{align*}
A straightforward computation will show that, for some constant $C$ independent of $n$, 
\[
\mathbb{E}\left[\left(\hat{Z}_I - \mathbb{E}[\hat{Z}_I]\right)^2 \right] \leq C (\mathcal{D}^2 + \mathcal{D}^4).
\]
Since $\mathcal{D} < \log n$ and $|\tilde{S}_I|$ is uniformly bounded in $n$ (from Lemma \ref{lmm3}), this implies that $\text{Var}[\hat{|E_s|}] < n^{p +o(1)}$, and Chebyshev's inequality gives that 
\[
\mathbb{P}[\hat{|E_s|} - \mathbb{E}[\hat{|E_s|}] \geq n^{\beta}/\log n] \leq n^{p - 2\beta + o(1)}.
\]
For any admissible value of $p$, this function vanishes as $n$ increases, and $\mathbb{P}[\hat{|E_s|}  \leq \tilde{\mu}_s + n^{\beta}/\log n] > 1 - \varepsilon$. Substituting this into \eqref{eq:TalagrandBoundSmall} gives us
\[
\mathbb{P}[\mathcal{C}] \leq \exp\left(- n^{2 \beta - p -6a} \right) \, ,
\]
completing the proof.
\end{proof}
Next, assume that $\mathcal{D} \geq \log n$. In this regime, we replace Talagrand's inequality with the celebrated Azuma--Hoeffding inequality: 
\begin{theorem}[Azuma--Hoeffding inequality~\cite{azuma, hoeffding63}]
Let $\{Z_0, Z_1, \dots ,Z_n\}$ be a martingale sequence with $|Z_{k} - Z_{k-1}| < c_k$ for all $k$. Then
\[
\mathbb{P}\left[|Z_n - Z_0| > t\right] \leq 2 \exp\left(- \frac{t^2}{2 \sum_{k=1}^n c_k^2}\right) \, .
\]
\end{theorem}

We wish to prove the following lemma, bounding the probability of the event $\mathcal{C}$:
\begin{lem}\label{ConcIneqLarge}
Assume that $\mathcal{D} \geq \log n$. Then, for all $n$ sufficiently large,
\[
\mathbb{P}[\mathcal{C}] \leq \exp\left(-\frac{ n^{2\beta}}{m^d \mathcal{D}^3 n^{6a}} \right) \, .
\]
\end{lem}
We note that a naive application of Azuma--Hoeffding to the martingale given by conditioning on the value of $\hat{X}_I$ would give a bound on the probability of $\mathcal{C}$ which depends on the {\em fourth} power of $\mathcal{D}^{-1}$, not the third as in the lemma --- an inferior bound. Thus, we need to be more careful in this analysis.
\begin{proof}
We partition $A_I$ into sets of measure $1/n$; formally, let $\{F_{I,t}\}$, for natural $t \leq \lceil \mathcal{D} \rceil$ be a collection of disjoint subsets of $A_I$ such that $\lambda(F_{I,t}) = 1/n$ for every $t \leq \lceil \mathcal{D} \rceil -1$, and
\[
\bigcup_{t} F_{I,t} = A_I \, .
\]
Note that the measure of the final $F_{I,t}$ will be strictly smaller than $1/n$, unless $\mathcal{D}$ is an integer. We define $W_{I,t} = |\chi(F_{I,t})|$.

Clearly, $\sum_{t} W_{I,t} = X_I$. Define $\overline{|E_s|}$ as (yet another!) truncation of $|E_s|$. Specifically, let $\overline{W_{I,t}} = W_{I,t} \cdot 1_{W_{I,t} < n^a/2}$, and define $\overline{|E_s|}$ by replacing each $X_I$ in the definition of $|E_s|$ by $\sum_t \overline{W_{I,t}}$. Note that $\overline{|E_s|}$ is a function of $m^d \lceil \mathcal{D}  \rceil$ independent random variables. Letting $\mathcal{F}'_\ell$ be the $\sigma$-algebra generated by the first $\ell$ $W_{I,t}$'s (enumerated arbitrarily), we once again have a martingale sequence $Z'_\ell = \mathbb{E}\left[\overline{|E_s|} \, \big| \, \mathcal{F}'_\ell\right]$. We also have that
\[
|Z'_{\ell} - Z'_{\ell-1}| \leq \NI \cdot \lceil \mathcal{D} \rceil n^{2a} \, .
\] 
Thus, Azuma--Hoeffding implies that
\begin{align*}
\mathbb{P}\left[\overline{|E_s|} - \tilde{\mu}_s > n^{\beta}/2 \right] & \leq \mathbb{P}\left[\overline{|E_s|} - \mathbb{E}[\overline{|E_s|}] > n^{\beta}/2 \right] \\ & \leq 2\exp\left(- \frac{n^{2\beta}}{8 m^d \lceil\mathcal{D}\rceil^3  \NI ^2 n^{4a}}\right) \, ,
\end{align*}
where the first line follows since $\tilde{\mu}_s > \mathbb{E}[\overline{|E_s|}]$. Using the uniform bound on $\NI$, we deduce that 
\begin{align*}
\mathbb{P}\left[\overline{|E_s|} - \tilde{\mu}_s > n^{\beta}/2 \right] & \leq \exp\left(-\frac{n^{2\beta + o(1)}}{m^d \mathcal{D}^3 n^{4a}} \right) \\ &  \leq \exp\left(-\frac{n^{2\beta}}{m^d \mathcal{D}^3 n^{5a}} \right) \, .
\end{align*}
By partitioning, 
\begin{equation}\label{CPartitioning}
\mathbb{P}[\mathcal{C}] \leq \mathbb{P}[\overline{|E_s|} - \tilde{\mu}_s > n^{\beta}/2] + \mathbb{P}[ \mathcal{C}, \overline{|E_s|} - \tilde{\mu}_s < n^{\beta}/2] \, .
\end{equation}
The second event on the righthand side implies the event
\[
\mathcal{E} := \{|\hat{E}_s| - \overline{|E_s|} > n^{\beta}/2 \} \, . 
\]
The lemma will follow if we can produce a good upper bound on the probability of the event $\mathcal{E}$.

The difference between the random variables $|\hat{E}_s|$ and $\overline{|E_s|}$ is given by configurations in which at least $W_{I,t}$ is larger than $n^{a}/2$. In fact,
\[
\hat{|E_s|} - \overline{|E_s|} < \displaystyle \sum_{(I,t)} \left[ \left( W_{I,t} \cdot 1_{W_{I,t} > n^a/2} \right) \cdot \sum_{J : \rho(I,J) \leq s} \left( X_J \cdot 1_{X_J < \lceil \mathcal{D} \rceil n^a} \right) \right]\, .
\]
While the random variables in the expression above are far from independent, we can replace the second sum over the $X_J$'s by $\NI \cdot \lceil \mathcal{D} \rceil n^a$, the upper bound imposed on it by the indicator random variables involved. Therefore,
\[
\mathbb{P}[\mathcal{E}] \leq \mathbb{P}\left[\displaystyle \sum_{(I,t)} \left(W_{I,t} \cdot 1_{W_{I,t} > n^a/2} \right) > \frac{C n^\beta}{\mathcal{D} n^{a}} \right]
\]
To bound this final probability, we can directly bound the exponential moment of $W_{I,t} \cdot 1_{W_{I,t} > n^a/2}$:
\[
\mathbb{E}\left[\exp\left(W_{I,t} \cdot 1_{W_{I,t} > n^a/2} \right) \right] \leq 1 + \displaystyle \sum_{k > n^a/2} \frac{e^k}{k!} \leq 1 + \exp( - n^a) \, .
\]
The first inequality follows because $W_{I,t}$ is a Poisson random variable of mean $1$ (or possibly less than $1$, if we pick the small $W_{I,t}$ in each $I$), while the second can deduced by using Stirling's approximation and explicitly summing.

Applying a Chernoff strategy, we find that
\[
\mathbb{P}\left[\displaystyle \sum_{(I,t)} \left(W_{I,t} \cdot 1_{W_{I,t} > n^a/2} \right) > \frac{C n^\beta}{\mathcal{D} n^{a}} \right] \leq \left(1 + \exp(-n^a) \right)^{m^d (\mathcal{D} +1)} \cdot \, \exp\left( - \frac{C n^\beta}{\mathcal{D} n^{a}}\right) \, .
\]
Using the standard approximation $(1+x) \leq e^{x}$, we find that the prefactor is bounded by $2$ for all $n$ sufficiently large.

Substituting the bounds into \eqref{CPartitioning} gives
\[
\mathbb{P}[\mathcal{C}] \leq \exp\left(- \frac{n^{2\beta}}{m^d \mathcal{D}^3 n^{5a}}\right) + 2 \exp\left( - \frac{C n^\beta}{\mathcal{D} n^{a}}\right) \, .
\]
The first term vanishes like $\exp(-n^{1 - 9a + o(1)})$, whereas the second vanishes as $\exp( - n^{1-7a +o(1)} )$. Therefore, we conclude that 
\[
\mathbb{P}[\mathcal{C}] \leq \exp\left(- \frac{n^{2\beta}}{m^d \mathcal{D}^3 n^{6a}}\right),
\]
as required.
\end{proof}

\subsection{Reducing to Deterministic Inequalities}
The final probabilistic step of this proof involves bounding the probability of a rather complicated set of simultaneous inequalities. Luckily, instead of dealing with the event itself, we will control it using the events $\mathcal{A}$, $\mathcal{B}_\xi$, and $\mathcal{C}$, whose probability we controlled previously.

Recall that
\[
V(\mathfrak{W}) = \frac{1}{q} \displaystyle \sum_{I \in \mathfrak{W}} X_I \;
\]
We also recall from the outline that, for any $\mathfrak{W} \subset T$, we set
\begin{equation}\label{Qdef}
Q(\mathfrak{W}) := \frac{2}{q^2}\sum_{I \in \mathfrak{W}} \biggl({X_I \choose 2} + \frac{1}{2} \displaystyle \sum_{\substack{J \in \tilde{N}_I\cap \mathfrak{W}\\J\ne I}} X_I X_J \biggr)\, .
\end{equation}
This counts the number of edges with both endpoints in $A_I$'s with $I \in\mathfrak{W}$, normalized b $q^2/2$; this choice will avoid many unnecessary factors in our later analysis. This immediately implies that, for any $\mathfrak{W} \subset T$,  
\[
Q(\mathfrak{W}) \leq \left[V(\mathfrak{W})\right]^2 \, .
\]
Using this notation, we can formulate Jensen's inequality in the following way:
\begin{lem}\label{lem:Jensen}
For any $\mathfrak{W} \subset T$,
\[
\frac{1}{q} \displaystyle \sum_{I \in \mathfrak{W}} Y_I \geq V(\mathfrak{W}) \left[\log \left(\frac{q}{w}\right) + \log V(\mathfrak{W}) - \log \left(\frac{|\mathfrak{W}|}{\tilde{\tau}_s}\right)- 1\right]\,.
\]
\end{lem}
\begin{proof}
This is  a direct application of Jensen's inequality to the convex function $ Y_I = X_I \big(\log (X_I/\mathcal{D}) -1\big) + \mathcal{D}$. It implies that
\[
\displaystyle \sum_{I \in \mathfrak{W}} Y_I \geq q V(\mathfrak{W}) \left[ \log \left(\frac{q V(\mathfrak{W})}{|\mathfrak{W}| \mathcal{D}}\right) -1 \right] + \mathcal{D} | \mathfrak{W}|.
\]
Dividing through by $q$, using the definition of $w$, and ignoring the positive additive term $\mathcal{D}|\mathfrak{W}|/q$ gives the bound above.
\end{proof}

We now begin the part of the analysis where we will obtain the universal constant $\teps_0$ in the statement of Theorem \ref{thm2}. For that, we first introduce a parameter $\xi>0$. We will eventually define $\teps$ in terms of $\xi$. In the following, wherever we say ``$\xi$ sufficiently small'', we mean ``$\xi\le \xi_0$ for some universal constant $\xi_0$''. The meaning of ``$n$ sufficiently large'' will remain as it is.

Recal that $\mathcal{T}_M := \max \{I : X_I \geq M\}$, and let $\mathfrak{I} := \{1, 2,\dots, \mathcal{T}_M\}$. Given $\xi >0$ and a constant $C$ which does not depend on $n$, we define $\mathcal{H}_\xi$ to be the event that the following four inequalities hold: 
\begin{align}
&\mathcal{T}_M  < n^{\alpha}\,, \label{bound1}\\  &\frac{1}{q}  \sum_{I \in \mathfrak{I}} Y_I \leq \log (q/w) - 1 + \xi\, , \label{bound2}\\
&V(\mathfrak{I})  < C,   \label{bound3} \\ 
& Q(\mathfrak{I}) \geq 1 - \frac{\xi}{\log n}\, .\label{bound4}
\end{align}
\begin{pro}\label{prop:DiscreteStructureTheroem}
Define $\mathcal{H}_{\xi}$ as above. Then, for all $n$ sufficiently large and $\xi$ sufficiently small, 
\[
\mathbb{P}[\mathcal{H}_{\xi}^c \cap \mathcal{L}_n(\tilde{\delta})] \leq 3\exp( - q [\log(q/w) -1 + \xi/2] ).
\]
\end{pro}
\begin{proof}
Suppose that $\mathcal{A}^c \cap \mathcal{B}_\xi^c \cap \mathcal{C}^c \cap \mathcal{L}_n(\tilde{\delta})$ implies $\mathcal{H}_\xi $. If this were true, then the union bound would imply that 
\[
\mathbb{P}[\mathcal{H}_{\xi}^c \cap \mathcal{L}_n(\tilde{\delta})] \leq  \mathbb{P}[\mathcal{A}] + \mathbb{P}[\mathcal{B}_\xi] +\mathbb{P}[\mathcal{C}]. 
\]
The righthand side of the proposition above can be expressed as $e^{-n^{p/2 + o(1)}}$. Therefore, for all sufficiently large $n$, Proposition \ref{CardinalityBoundPro} implies that
\[
\mathbb{P}[\mathcal{A}]\cdot \exp(q [\log(q/w) -1 + \xi/2] ) \leq \exp\left(- n^{p/2 + a/3} + n^{p/2 +o(1)} \right) < 1,
\]
where the final inequality follows because the first exponent is larger than $p/2$ for all admissible values of $p$. Similar analysis of exponents and Lemma \ref{ConcIneqSmall} tell us that, if $\mathcal{D} < \log n$ and $n$ is large,
\[
\mathbb{P}[\mathcal{C}]\cdot \exp(  q [\log(q/w) -1 + \xi/2] \leq \exp\left(- n^{2 \beta - p - 6a} + n^{p/2 +o(1)} \right) < 1.
\]
When $\mathcal{D} > \log n$, $\mathbb{P}[\mathcal{C}] < \exp( - n^{1 - 10a - o(1)})$ by Lemma \ref{ConcIneqLarge}, and, for all $n$ sufficiently large, 
\[
\mathbb{P}[\mathcal{C}]\cdot \exp( q [\log(q/w) -1 + \xi/2] ) \leq \exp\left(- n^{1 - 10a} + n^{p/2 +o(1)} \right) < 1.
\]
Finally, Proposition \ref{YIBound} bounds the probability of $\mathcal{B}_\xi$ by the righthand side. We conclude that, if $\mathcal{A}^c \cap \mathcal{B}_\xi^c \cap \mathcal{C}^c \cap \mathcal{L}_n(\tilde{\delta})$ implies $\mathcal{H}_\xi$,
\[
\mathbb{P}[\mathcal{H}_{\xi}^c \cap \mathcal{L}_n(\tilde{\delta})] \leq 3\exp( - q [\log(q/w) -1 + \xi/2] ).
\]
For the remainder of this proof, we condition on the event $\mathcal{A}^c \cap \mathcal{B}_\xi^c \cap \mathcal{C}^c \cap \mathcal{L}_n(\tilde{\delta})$. The event $\mathcal{A}^c$ automatically implies \eqref{bound1}. With this bound on $\mathcal{T}_M$, \eqref{bound2} is immediate from $\mathcal{B}_\xi^c$.  Next, we apply Lemma~\ref{lem:Jensen} to the sum of the $Y_I$'s over $I \in \mathfrak{I}$ to deduce that
\[
V(\mathfrak{I}) \left[\log \left(\frac{q \tilde{\tau}_s}{w \cdot \mathcal{T}_M}\right)  - 1\right] + V(\mathfrak{I})  \log V(\mathfrak{I}) \leq  \log (q/w) - 1 + \xi\, .
\]
If $V(\mathfrak{I}) \leq 1$, we have a (much better than needed!) bound on $V(\mathfrak{I})$. Otherwise, $V(\mathfrak{I})  \log V(\mathfrak{I})$ is positive, and we conclude that
\begin{equation*}
V(\mathfrak{I}) \leq \frac{\log (q/w) - 1 + \xi }{\log (q \tilde{\tau}_s/(w \cdot \mathcal{T}_M)) -1}\, .
\end{equation*}
By \eqref{bound1} and the definitions of the variables $q$, $w$ and $\alpha$,
\begin{equation}\label{eq:PolyLowerBound}
\frac{q \tilde{\tau}_s }{w \cdot \mathcal{T}_M} \geq n^{a/2 + o(1)} \, .
\end{equation}
Therefore, the denominator grows at least as a constant multiple of $\log n$. Meanwhile, $q/w \leq n^{1-p/2 + o(1)}$. This proves \eqref{bound3}.

For the final stipulation, we use the event $\mathcal{L}_n(\tilde{\delta}) \cap \mathcal{C}^c$. By $\mathcal{C}^c$,
\begin{equation}\label{ConcBound}
(q^2 /2)\cdot Q(\mathfrak{I}^c) - \tilde{\mu}_s \leq n^\beta\, .
\end{equation}
By the occurrence of $\mathcal{L}_n(\tilde{\delta})$,
\[
\frac{q^2}{2} \big(Q(\mathfrak{I}) + Q(\mathfrak{I}^c)\big) + \left(\sum_{I \in \mathfrak{I}} \displaystyle \sum_{J \in \tilde{N}_I \cap \mathfrak{I}^c} X_I X_J \right)\geq (1 + \tilde{\delta}) \tilde{\mu}_s \, ,
\]
where the first term counts edges with both endpoints either in or outside the bulk, whereas the sum counts the number of edges with exactly one endpoint in the bulk. Using the upper bound on $Q(\mathfrak{I}^c)$ given by \eqref{ConcBound}, we deduce that
\begin{equation}\label{Qlb}
Q(\mathfrak{I}) + \frac{2}{q^2} \left(\sum_{I \in \mathfrak{I}} \displaystyle \sum_{J \in \tilde{N}_I \cap \mathfrak{I}^c} X_I X_J \right)\geq 1 - \frac{2n^\beta}{q^2} \, .
\end{equation}
By definition, $X_J \leq M$ whenever $J \not \in \mathfrak{I}$, and therefore 
\[
\frac{2}{q^2} \left(\sum_{I \in \mathfrak{I}} \displaystyle \sum_{J \in \tilde{N}_I \cap \mathfrak{I}^c} X_I X_J \right) \leq \frac{2 \NI  M}{q} \cdot V(\mathfrak{I}) \, .
\]
Recalling \eqref{eq:DefinitionofM}, we see that 
\[
\frac{M}{q} = n^{\max \{a - p/2 + o(1), p/2 - 1 + a + o(1)\} }.
\]
The exponent is always negative, and therefore, for any admissible $s$-graded model, $M/q < \xi/(2 C \log n)$ for sufficiently large $n$. By \eqref{bound3} and Lemma \ref{lmm3}, $V(\mathfrak{I})$ and $|\tilde{N}_I|$ are uniformly bounded in $n$. Similarly, we can increase $n$ sufficiently to ensure that $(2 n^\beta)/q^2$ is also bounded above by $\xi/(2 \log n)$ (which is possible thanks to the definition of $\beta$). Substituting the bounds into \eqref{Qlb} shows that $\mathcal{A}^c \cap \mathcal{B}_\xi^c \cap \mathcal{C}^c \cap \mathcal{L}_n(\tilde{\delta})$ implies $\mathcal{H}_\xi$, completing the proof. 
\end{proof}

\subsection{Controlling the Linear Sum}
The rest of the section is dedicated to analyzing configurations in $\mathcal{H}_\xi$. We will condition on this event --- i.e.~assume that the four inequalities in the definition hold, and show that subsets with certain properties exist. We emphasize that the statement will hold for {\em any} positive integer $s$, asymptotically in $n$. The number $\xi$ will be chosen to be sufficiently small for certain estimates to hold.

Define $\mathcal{T}_V$ by
\begin{equation}\label{eq:mathcalTV}
\mathcal{T}_V := \min \left\{ k : V(\{1, \dots, k\}) > 1 - \frac{2\xi}{\log n} \right\} \, . 
\end{equation} 
{\em A priori}, the set $\mathfrak{T}:= \{1 ,2, \dots, \mathcal{T}_V\}$ may include some elements of the bulk. The next lemma proves that not only are all these indices away from bulk, but, in fact, restricting our attention to $\mathfrak{T}$ does not force us to ignore too many edges.

\begin{lem}\label{lmm6}
Define $\mathfrak{T}$ as above, and assume that $\mathcal{H}_\xi$ holds. Then, for all $n$ sufficiently large and $\xi$ sufficiently small, the following holds:
\[
\tilde{\tau}_s\left(1 - \xi^{1/2}\right) \leq \mathcal{T}_V \leq \mathcal{T}_M.
\]
\begin{align*}
Q(\mathfrak{T}) &\geq 1 - \psi(\mathcal{T}_V)\,
\end{align*}
and
\begin{align*}
 1 - \frac{2\xi}{\log n} &\leq V(\mathfrak{T}) \leq 1 + \phi(\mathcal{T}_V)
\end{align*}
where
\[
\phi(x) = \min \left\{\frac{C[\log(x/ \tilde{\tau}_s) + \xi]}{\log n}, \, \frac{2}{x}\right\}
\]
and
\[
\psi(x) = \min \left\{ \frac{C[1 + \log(x/ \tilde{\tau}_s)] }{\log n},\, \frac{C'}{x} \right\} + \frac{\xi}{\log n}
\]
for some constants $C$ and $C'$ independent of $n$.
\end{lem}
The exact forms of $\phi$ and $\psi$ are chosen to make the proof more transparent. The important feature of the functions are that $\phi$ and $\psi$ decrease for large $x$, providing better bounds whenever $\mathcal{T}_V$ is large. Since we have no {\em a priori} bound for this cardinality, this will be crucial for later analysis. Furthermore, for any positive $x$ and sufficiently large value of $n$,
\begin{equation}\label{eq:UBpsiphi}
\max\{ \psi(x), \phi(x)\} \leq \frac{1}{\sqrt{\log n}}. 
\end{equation} 

\begin{proof}
Since $Q(\mathfrak{I}) \geq 1- \xi/\log n$ (by \eqref{bound4}), we know that 
\[
V(\mathfrak{I}) \geq  \sqrt{Q(\mathfrak{I})} \geq 1- \xi/\log n \, ,
\]
which immediately implies $\mathcal{T}_V \leq \mathcal{T}_M$. Since $Y_I \geq 0$, the upper bound \eqref{bound2} in the definition of $\mathcal{H}_\xi$ can be applied to elements of $\mathfrak{T}$. Applying Lemma~\ref{lem:Jensen} to this set, we deduce that
\begin{equation}\label{JensenT}
V(\mathfrak{T}) \left[\log \left(\frac{q}{w}\right) + \log V(\mathfrak{T}) - \log \left(\frac{\mathcal{T}_V}{\tilde{\tau}_s}\right) - 1\right] \leq \log \left(\frac{q}{w}\right) + \xi -1 \, .
\end{equation}
By definition, $V(\mathfrak{T})$ is at least $1 - 2\xi/ \log n$. Noting that 
\[
\log\left(1 - \frac{2 \xi}{\log n}\right) \geq \frac{-4 \xi}{\log n}
\]
for all sufficiently large $n$, we can conclude that
\[
- \left(1 - \frac{2 \xi}{\log n} \right) \log \left(\frac{\mathcal{T}_V}{\tilde{\tau}_s}\right) \leq \xi +  \frac{2 \xi \log (q/w)}{\log n} + \frac{C \xi}{\log n}\,.
\]
We recall that $q/w \leq n^{1-p/2 + o(1)}$, and therefore there exists a constant $C$ such that
\[
- \log \left(\frac{\mathcal{T}_V}{\tilde{\tau}_s}\right) \leq C \xi\, .
\]
Inverting the negative logarithm gives
\[
\mathcal{T}_V \geq \tilde{\tau}_s \exp(-C \xi) \geq \tilde{\tau}_s(1 - C \xi) \geq \tilde{\tau}_s (1 - \xi^{1/2})\, ,
\]
where the final inequality holds for all sufficiently small $\xi$, as required.

We prove the upper bound on $V(\mathfrak{T})$ by proving a bound that holds for all values of $\mathcal{T}_V$, and then improving it in the case $\mathcal{T}_V < \log n$. We observe that the definition of the ordering of the $X_I$'s implies that $X_{\mathcal{T}_V}$ is equal to the minimum of the set $\{X_1, X_2, \dots, X_{\mathcal{T}_V}\}$, and thus
\begin{equation}\label{eq:ubTV}
\frac{X_{\mathcal{T}_V}}{q} \leq \frac{V(\mathfrak{T})}{\mathcal{T}_V} \, .
\end{equation}
Furthermore, by minimality of $\mathcal{T}_V$,  
\[
V(\mathfrak{T} \setminus \{X_{\mathcal{T}_V}\}) \leq 1 - \frac{2 \xi}{\log n} \, .
\]
Recall that $\ttau \geq 2^d$ (from \eqref{eq:ttaubound}) and therefore, $\mathcal{T}_V > (1 - \xi^{1/2}) \ttau$ implies that $\mathcal{T}_V \geq 2$ for any $\xi$ sufficiently small. Since $V(\mathfrak{T}) = V(\mathfrak{T} \setminus \{X_{\mathcal{T}_V}\}) + X_{\mathcal{T}_V}/q$, we deduce that
\[
V(\mathfrak{T})\leq \frac{1 - (2 \xi)/(\log n)}{1 - 1/\mathcal{T}_V} \leq 1 +  \frac{2}{\mathcal{T}_V} - \frac{2 \xi}{\log n}\, .
\]
Ignoring the negative contribution $2 \xi/ \log n$ gives the desired bound.

Whenever $\mathcal{T}_V \geq \log n$, an explicit computation will show that $\phi(\mathcal{T}_V) = 2/ \mathcal{T}_V$ for all sufficiently large $n$. Thus, to complete the bound on $V(\mathfrak{T})$, we may assume that $\mathcal{T}_V < \log n$.
We now return to \eqref{JensenT}. If $V(\mathfrak{T}) \leq 1$, we are done. Otherwise, $V(\mathfrak{T}) \log V(\mathfrak{T})$ is positive, and thus
\begin{align*}
V(\mathfrak{T}) &\leq \frac{\log(q/w) - 1 + \xi}{\log(q/w) - \log \left(\mathcal{T}_V/\tilde{\tau}_s\right)- 1} \\ &  = 1 + \frac{\log \left(\mathcal{T}_V/\tilde{\tau}_s\right) + \xi}{ \log(q/w) - \log \left(\mathcal{T}_V/\tilde{\tau}_s\right)-1} \\ & \leq 1 + \frac{C[\log \left(\mathcal{T}_V/\tilde{\tau}_s\right) + \xi]}{\log n} \, ,
\end{align*}
where we use the upper bound $\mathcal{T}_V \leq \mathcal{T}_M$ and \eqref{eq:PolyLowerBound} to ensure that the denominator is bounded below by $[(a/3) \log n -1  ]> (a/4) \log n$ for all sufficently large $n$.  This gives half the desired upper bound on $V(\mathfrak{T})$ under the assumption $\mathcal{T}_V < \log n$.

The loewr bound on $Q(\mathfrak{T})$ will follow a similar strategy. We begin by noting that an algebraic manipulation will prove that, for any set $\mathfrak{W} \subset T$, 
\[
Q(\mathfrak{W}) = \frac{1}{q^2} \left( \sum_{I \in \mathfrak{W}} X_I \left[\sum_{J \in \tilde{N}_I \cap \mathfrak{W}} X_J \right] \right) - \frac{V(\mathfrak{W})}{q}. 
\]
Let $\mathfrak{Z} = \mathfrak{I} \setminus \mathfrak{T}$, and observe that 
\begin{align*}
Q(\mathfrak{I}) - Q(\mathfrak{T}) & \leq \frac{1}{q^2} \cdot \left(\displaystyle \sum_{I \in \mathfrak{I}} X_I \left[\sum_{J \in \tilde{N}_I \cap \mathfrak{I}} X_J \right] - \sum_{I \in \mathfrak{T}} X_I \left[\sum_{J \in \tilde{N}_I \cap \mathfrak{T}} X_J \right]\right) \, \\ &  = \frac{1}{q^2} \cdot \left(\displaystyle \sum_{I \in \mathfrak{Z}} X_I \left[\sum_{J \in \tilde{N}_I \cap \mathfrak{I}} X_J \right] + \sum_{I \in \mathfrak{T}} X_I \left[\sum_{J \in \tilde{N}_I \cap \mathfrak{Z}} X_J \right]\right)
\end{align*}
We decompose the first sum into 
\begin{align*}
\sum_{I \in \mathfrak{Z}} X_I \left[\sum_{J \in \tilde{N}_I \cap \mathfrak{I}} X_J \right] & = \sum_{I \in \mathfrak{Z}} X_I \left[\sum_{J \in \tilde{N}_I \cap \mathfrak{Z}} X_J \right] + \sum_{I \in \mathfrak{Z}} X_I \left[\sum_{J \in \tilde{N}_I \cap \mathfrak{T}} X_J \right] \\ & = \sum_{I \in \mathfrak{Z}} X_I \left[\sum_{J \in \tilde{N}_I \cap \mathfrak{Z}} X_J \right] + \sum_{I \in \mathfrak{T}} X_I \left[\sum_{J \in \tilde{N}_I \cap \mathfrak{Z}} X_J \right], 
\end{align*}
where we can flip the order of summation in the final equality thanks to the symmetry of $\rho$. Substituting this in, we find that 

\begin{equation}
Q(\mathfrak{I}) - Q(\mathfrak{T}) \leq \frac{2}{q^2} \left(\displaystyle \sum_{I \in \mathfrak{I}} X_I \left[\sum_{J \in \tilde{N}_I \cap \mathfrak{Z}} X_J \right] \right). \label{eq:breakapart}
\end{equation}
By ordering of the $X_I$'s, we have that $X_I \leq X_{\mathcal{T}_V}$ for any $I \in \mathfrak{Z}$. Therefore, 
\[
\frac{1}{q} \sum_{J \in \tilde{N}_I \cap \mathfrak{Z}} X_J \leq \frac{\NI  V(\mathfrak{T})}{\mathcal{T}_V}, 
\]
where we reuse \eqref{eq:ubTV}. Substituting this into \eqref{eq:breakapart}, we see that 
\[
Q(\mathfrak{I}) - Q(\mathfrak{T}) \leq \frac{ 2 \NI V(\mathfrak{I}) \cdot V(\mathfrak{T})}{\mathcal{T}_V}. 
\]
Thanks to $\mathcal{H}_\xi$, $V(\mathfrak{I})$ is uniformly bounded in $n$ (by \eqref{bound3}) and $Q(\mathfrak{I}) \geq 1 - \xi/\log n$ (by \eqref{bound4}). The uniform bound on $\NI$ from Lemma \ref{lmm3} proves half of the lower bound on $Q(\mathfrak{T})$.

Again, if $\mathcal{T}_V \geq \log n$, $\psi(\mathcal{T}_V) = C'/ \mathcal{T}_V + \xi/ \log n$ for all sufficiently large $n$. Thus, we may assume that $\mathcal{T}_V < \log n$ for the rest of the proof. Suppose that we were given the bound $V(\mathfrak{Z}) \leq  C[1 + \log(\mathcal{T}_V/\ttau)]/\log n$. From \eqref{eq:breakapart}, we konw that 
\begin{align*}
Q(\mathfrak{I}) - Q(\mathfrak{T}) &\leq 2 V(\mathfrak{I}) \cdot V(\mathfrak{Z}) \leq \frac{C\left[1 + \log \left(\frac{\mathcal{T}_V}{\ttau} \right)\right]}{\log n},
\end{align*}
where we use \eqref{bound3} to replace $V(\mathfrak{I})$ by a uniform constant. Thus, it is sufficient to prove the upper bound on $V(\mathfrak{Z})$.

To do so, we return to \eqref{bound4}, and apply Lemma \ref{lem:Jensen} to $\mathfrak{T}$ {\em without} ignoring the contibution of elements of $\mathfrak{Z}$. This allows us to deduce that
\begin{align}\label{eq:JensenParts}
V(\mathfrak{T}) & \left[\log \left(\frac{q}{w}\right) + \log V(\mathfrak{T}) - \log \left(\frac{\mathcal{T}_V}{\tilde{\tau}_s}\right) - 1\right]  + \frac{1}{q} \displaystyle \sum_{I \in \mathfrak{Z}} Y_I \, \leq \,  \log(q/w) - 1 + \xi\,.
\end{align}
Thanks to the upper bound on $\mathcal{T}_V$, we know that the bracketed term is positive and increasing in $V(\mathfrak{T})$. Since $V(\mathfrak{T}) \geq 1 - 2 \xi/ \log n$, some careful calculations imply that, for all sufficiently large $n$ and sufficiently small $\xi$, 
\begin{align*}
V(\mathfrak{T}) \left[\log \left(\frac{q}{w}\right) + \log V(\mathfrak{T}) - \log \left(\frac{\mathcal{T}_V}{\tilde{\tau}_s}\right) - 1\right] & \geq \log(q/w) - 1 - \log \left(\frac{\mathcal{T}_V}{\ttau}\right) \\ & \quad - \frac{3 \xi}{\log n} - \frac{3 \xi \log(q/w)}{\log n}.
\end{align*}
Substituting this into \eqref{eq:JensenParts} gives that 
\begin{align*}
\frac{1}{q} \displaystyle \sum_{I \in \mathfrak{Z}} Y_I & \leq \xi + \frac{3 \xi}{\log n} + \log \left(\frac{\mathcal{T}_V}{\tilde{\tau}_s}\right) + \frac{3 \xi \log(q/w)}{\log n} \\ & \leq C \xi  + \log \left(\frac{\mathcal{T}_V}{\tilde{\tau}_s}\right),
\end{align*}
for some uniform $C$ and all sufficiently large $n$. Another application of Lemma \ref{lem:Jensen} --- this time, to $\mathfrak{Z}$ ---  implies that 
\[
V(\mathfrak{Z}) \log\left(\frac{q \tilde{\tau_s}}{w \cdot \mathcal{T}_M} \right) + V(\mathfrak{Z}) (\log V(\mathfrak{Z}) -1) \leq C \xi +  \log \left(\frac{\mathcal{T}_V}{\tilde{\tau}_s}\right) \, ,
\]
where we bound $|\mathfrak{Z}|$ by $\mathcal{T}_M$. The function $x [\log (x) -1]$ is bounded below by $-1$ for any positive $x$; applying this bound to $V(\mathfrak{Z})[\log V(\mathfrak{Z}) -1]$ and rearranging the previous inequality algebraically, we find that, for all sufficiently large $n$
\[
V(\mathfrak{Z}) \leq \frac{C \xi +  \log \left(\frac{\mathcal{T}_V}{\tilde{\tau}_s}\right)+ 1}{\log[q \tilde{\tau}_s/ (w \cdot \mathcal{T}_M) ]}  \leq \frac{C \xi +  \log \left(\frac{\mathcal{T}_V}{\tilde{\tau}_s}\right) + 1}{(a/3) \cdot \log n} \leq \frac{C\left[1 + \log \left(\frac{\mathcal{T}_V}{\tilde{\tau}_s}\right)\right]}{\log n }\, ,
\]
where use \eqref{eq:PolyLowerBound} to get the penultimate bound. This completes the proof. 
\end{proof}

Recall that, for any $\mathfrak{W} \subset T$, $Q(\mathfrak{W}) \leq V(\mathfrak{W})^2$. In a sense, the content of Lemma \ref{lmm6} is that this bound is nearly right for $\mathfrak{T}$. To make this precise, we define 
\[
P_I(\mathfrak{W}) := \frac{1}{q} \sum_{J \in \mathfrak{W} ,\, \rho(I,J) >s} X_J \, .
\]
Note that the sum is over indices whose distance {\em exceeds} $s$.
\begin{coro}\label{PIbound}
Assume $\mathcal{H}_{\xi}$ holds, and that $\mathcal{T}_V$ and $\mathfrak{T}$ are defined as above. Then, for all $n$ sufficiently large and $\xi$ sufficiently small, 
\[
\frac{1}{q}  \sum_{I \in \mathfrak{T}} X_I P_I(\mathfrak{T}) \leq 3 \phi(\mathcal{T}_V)+ \psi(\mathcal{T}_V) \, ,
\]
where $\phi(\cdot)$ and $\psi(\cdot)$ are defined as in Lemma \ref{lmm6}. 
\end{coro}
\begin{proof}
We observe that
\[
V(\mathfrak{T})^2 - Q(\mathfrak{T}) = \frac{1}{q}  \sum_{I \in \mathfrak{T}} X_I P_I(\mathfrak{T})+ \frac{V(\mathfrak{T})}{q} \, , 
\]
where the additive factor of $V(\mathfrak{T})/q$ comes from the fact that $n^2 - 2 {n \choose2} = n$.
By Lemma~\ref{lmm6}, we conclude that
\begin{align*}
\frac{1}{q}  \sum_{I \in \mathfrak{T}} X_I P_I(\mathfrak{T}) &\leq V(\mathfrak{T})^2 - Q(\mathfrak{T}) \\
&\leq (1 + \phi(\mathcal{T}_V))^2 - (1- \psi(\mathcal{T}_V)) \leq 3 \phi(\mathcal{T}_V) + \psi(\mathcal{T}_V) \, .
\end{align*}
This completes the proof.
\end{proof}

\subsection{Removing Lower Order Terms} 
Before proceeding, consider the situation in which we assume that both $\phi$ and $\psi$ vanish, and that $\xi = 0$. This implies that $P_I(\mathcal{T}_V)$ must be zero for every $I \leq \mathcal{T}_V$, since the sum in Corollary \ref{PIbound} is made up of non-negative terms. Thus, $\mathfrak{T}$ would have diameter at most $s$. Thanks to the lower bound on $\mathcal{T}_V$ in Lemma \ref{lmm6}, the set would be a maximal clique set!

Of course, $\phi$, $\psi$, and $\xi$ are nonzero, so we cannot apply this argument to $\mathfrak{T}$ directly. We further truncate the set to deal with this difficulty. Define 
\begin{equation}\label{eq:mathcalTP}
\mathcal{T}_P := \max \left\{ k \leq \mathcal{T}_V : X_k \geq  \frac{\xi q}{\ttau} \right\} \, ,
\end{equation}
where we set $\mathcal{T}_P=0$ if the set on the right is empty. We denote the set $\{1, \dots, \mathcal{T}_P\}$ by $\mathfrak{P}$. The following lemma establishes bounds on $V(\mathfrak{P})$ and the sum of the $Y_I$'s in $\mathfrak{P}$. At the end of this section, we will use these bounds to deduce some geometric properties of $\mathfrak{P}$.

\begin{lem}\label{lmm7}
Assume that $\mathcal{H}_\xi$ holds, and define $\mathcal{T}_P$ and $\mathfrak{P}$ as above. Then, for sufficiently small $\xi$ and sufficiently large $n$,
\[
1 - \xi^{1/2} <  V(\mathfrak{P}) \leq 1 + \phi(\mathcal{T}_V)
\]
and
\[
\frac{1}{q} \sum_{I \in \mathfrak{P}} Y_I  < V(\mathfrak{P}) (\log(q/w) - 1) + \xi^{1/2}\, .
\]
\end{lem}
The stipulation on the sum of the $Y_I$'s in $\mathfrak{P}$ is a slight (but essential) improvement on the naive inclusion bound given by \eqref{bound2} in the definition of $\mathcal{H}_\xi$. 
\begin{proof}
The upper bound on $V(\mathfrak{P})$ follows from the inclusion $\mathfrak{P} \subset \mathfrak{T}$ and Lemma \ref{lmm6}. Define 
\[
\mathfrak{L}_1 := \biggl\{ I \in \mathfrak{T}: X_I < \frac{\xi q}{\ttau \log (\mathcal{T}_V)}\biggr\}
\]
and 
\[
\mathfrak{L}_2 := \biggl\{ I \in  \mathfrak{T}: \frac{ \xi q}{\ttau \log (\mathcal{T}_V)} \leq X_I <\frac{\xi q}{\ttau}
\biggr\}\, .
\]
Clearly, $\mathfrak{L}_1,\mathfrak{L}_2$ and $\mathfrak{P}$ form a partition of $\mathfrak{T}$. Proving the lemma is tantamount to 
proving good upper bounds on $V(\mathfrak{L}_1) $ and $V(\mathfrak{L}_2)$, as well as good lower bounds on the sum of the $Y_I$'s in both sets.

To bound $V(\mathfrak{L}_1)$, we first need to bound $P_I(\mathfrak{L}_1)$ from below for any $I \in \mathfrak{T}$. The worst case scenario is that the distance restriction removes
the $\NI$ largest elements of $\mathfrak{L}_1$. Therefore,
\[
P_I(\mathfrak{L}_1) \geq V(\mathfrak{L}_1) - \frac{1}{q}\NI \left( \max_{J \in \mathfrak{L}_1} X_J \right) > V(\mathfrak{L}_1) - \frac{\xi \NI}{\ttau \log (\mathcal{T}_V)}\, .
\]
Since $P_I(\mathfrak{W}) \leq P_I(\mathfrak{W}')$ whenever $\mathfrak{W} \subset \mathfrak{W}'$, we see that Corollary \ref{PIbound} implies that
\[
\frac{1}{q}  \sum_{I \in \mathfrak{T}}  X_I P_I(\mathfrak{L}_1) \leq 3 \phi(\mathcal{T}_V) + \psi(\mathcal{T}_V)\, .
\]
Replacing $P_I(\mathfrak{L}_1)$ with its minimum and recalling that $\NI$ and $\ttau$ are uniformly bounded in $n$ (by Lemma \ref{lmm3}), we see that
\[
\biggl(V(\mathfrak{L}_1) - \frac{C \xi}{\log (\mathcal{T}_V) } \biggr) V(\mathfrak{T}) < 3 \phi(\mathcal{T}_V) + \psi(\mathcal{T}_V)\, .
\]
Using the (very suboptimal) lower bound of $1/2$ for $V(\mathfrak{T})$ (which follows from Lemma \ref{lmm6} and $n$ sufficiently large), we conclude that
\begin{equation}\label{vl1}
 V(\mathfrak{L}_1) < 6\phi(\mathcal{T}_V) + 2 \psi(\mathcal{T}_V) + \frac{C \xi}{\log (\mathcal{T}_V)}\, .
\end{equation}
for some $C$ independent of $n$. Repeating this analysis with $\mathfrak{L}_2$ yields the inequality
\begin{equation*}
 V(\mathfrak{L}_2) < 6\phi(\mathcal{T}_V) + 2 \psi(\mathcal{T}_V) + C \xi\, .
\end{equation*}
Since both $\phi(x)$ and $\psi(x)$ are bounded above by $1/(\log n)^{1/2}$ (from \eqref{eq:UBpsiphi}) and $\mathcal{T}_V > (1 - \xi^{1/2})\ttau$ (by Lemma \ref{lmm6}), we get
\begin{equation}\label{vbounds}
\max\{V(\mathfrak{L}_1), \, V(\mathfrak{L}_2)\} < C \xi.
\end{equation}
Combining the previous bounds and the lower bound on $V(\mathfrak{T})$ from Lemma \ref{lmm6}, we find that, for all sufficiently small values of $\xi$, 
\begin{align}\label{eq:PartitionofV}
V(\mathfrak{P})&  = V(\mathfrak{T}) - V(\mathfrak{L}_1) - V(\mathfrak{L}_2) \\ &\geq  1 -  2 C \xi  - \frac{2 \xi}{\log n} \nonumber \\ &  \geq 1 - \xi^{1/2} \nonumber \, .
\end{align}
This establishes the lower bound on $V(\mathfrak{P})$.

We now turn to bounding $\sum_{I \in \mathfrak{P}} Y_I$. Suppose that
\begin{equation}\label{ineq32}
\frac{1}{q}\sum_{I \in \mathfrak{L}_i} Y_I > V(\mathfrak{L}_i)\left(\log(q/w) -1\right) - 2 \xi^{2/3}. 
\end{equation}
We observe that \eqref{eq:PartitionofV} and Lemma \ref{lmm6} imply that 
\[
V(\mathfrak{P}) + (2\xi)/\log n \geq 1 - V(\mathfrak{L}_1) - V(\mathfrak{L}_2).
\]
By inclusion, we may apply \eqref{bound2} to $\mathfrak{T}$, partition the elements into $\mathfrak{P}$, $\mathfrak{L}_1$, and $\mathfrak{L}_2$  and substitute the above bounds to get the following set of deductions 
\begin{align*}
\frac{1}{q}\sum_{I\in \mathfrak{P}}  Y_I & \leq (\log(q/w) - 1) + \xi - \frac{1}{q} \sum_{I \in \mathfrak{L}_1 \cup \mathfrak{L}_2} Y_I \\ & < \left(1 - V(\mathfrak{L}_1) - V(\mathfrak{L}_2) \right) \left(\log (q/w) - 1\right) + 4\xi^{2/3} + \xi
\\ & < V(\mathfrak{P})\left(\log(q/w) -1 \right) + \frac{2 \xi}{\log n} \left(\log(q/w) -1\right)  + 5\xi^{2/3} \\
& < V(\mathfrak{P})\left(\log(q/w) -1 \right) + \xi^{1/2}\, , 
\end{align*}
where the final inequality holds for sufficiently small $\xi$. Thus, we only need to show \eqref{ineq32}.

By applying Lemma \ref{lem:Jensen} to $\mathfrak{L}_i$, we know that
\begin{equation}\label{JensenLi}
\frac{1}{q}\sum_{I \in \mathfrak{L}_i } Y_I \geq V(\mathfrak{L}_i) \left(\log(q/w) -1 \right) +V(\mathfrak{L}_i) \log V(\mathfrak{L}_i) -
V(\mathfrak{L}_i) \log(|\mathfrak{L}_i|) \, ,
\end{equation}
where we ignored the positive term $V(\mathfrak{L}_i) \log \ttau$. Using \eqref{vbounds}, we find that $V(\mathfrak{L}_i) \log V(\mathfrak{L}_i)$ is bounded below by  $C \xi \log [C \xi] > - \xi^{2/3}$. 

Controlling $V(\mathfrak{L}_1) \log (|\mathfrak{L}_1|)$ is quite straightforward: since inclusion implies that $|\mathfrak{L}_1| \le  \mathcal{T}_V$, we can use \eqref{vl1} to see that, for sufficiently large $n$,
\begin{align*}
V(\mathfrak{L}_1) \log (|\mathfrak{L}_1|) & \leq \left(6 \phi(\mathcal{T}_V) + 2 \psi(\mathcal{T}_V) + \frac{C \xi }{\log(\mathcal{T}_V)} \right) \log(\mathcal{T}_V) \\ & \leq  \max_{2 \leq x \leq \mathcal{T}_M} \{ \log x \cdot (6 \phi(x) + 2 \psi(x)) \} + C \xi,
\end{align*}
where we use the fact that $2 \leq \ttau(1 - \xi^{1/2}) \leq \mathcal{T}_V \leq \mathcal{T}_M$. Recalling the definitions of $\phi$ and $\psi$, we have that
\begin{align*}
6 \phi(x) + 2 \psi(x) = \min &\left\{\frac{6C[\log(x/ \tilde{\tau}_s) + \xi]}{\log n}, \, \frac{12}{x}\right\}\,   \\ & \quad \, + \min \left\{ \frac{2C[1 + \log(x/ \tilde{\tau}_s)] }{\log n},\, \frac{2C'}{x} \right\} + \frac{\xi}{\log n},
\end{align*}
and thus, it can be shown that, for some (different) uniform constant $C$,
\[
\log x \cdot (6 \phi(x) + 2 \psi(x)) \leq C \cdot \min \left\{\frac{\log x (1 + \log x)}{\log n}, \frac{\log x}{x}  \right\} + \frac{\xi \log x}{\log n}.
\]
We wish to prove a uniform, $x$-independent bound on the minimum above (for any $x \geq 2$). The first function in the minimum is increasing; meanwhile, the second function in the minimum is larger than the first one on $[2,e]$ (for all sufficiently large $n$), and is a decreasing function of $x$ on $(e,\infty)$. Thus, for any $2 \leq x \leq \mathcal{T}_M$, the minimum is bounded above by $\log y(1+\log y)/\log n$ for any $y$ that satisfies $(1 + \log y)/\log n \geq 1/y$. The value $y = \log n$ is one such value. Combining the two estimates gives 
\[
\log x \cdot (6 \phi(x) + 2 \psi(x)) \leq \frac{C [\log \log n  + (\log \log n)^2]}{\log n} + \frac{\xi \log x}{\log n}.
\]
Thus, we find that 
\begin{equation}\label{eq:OptimizationOnTM}
\max_{2 \leq x \leq \mathcal{T}_M} \log(x) \cdot (6\phi(x)+ 2\psi(x)) \leq \frac{C [\log \log n  + (\log \log n)^2]}{\log n} + \frac{\xi \log \mathcal{T}_M}{\log n}. 
\end{equation}
Using \eqref{bound1} to bound $\mathcal{T}_M$,  we deduce that, for all sufficiently large $n$, $ V(\mathfrak{L}_1) \log (|\mathfrak{L}_1|) < C \xi$ for some uniform constant $C$; this, in turn, is bounded by $\xi^{2/3}$ for all sufficiently small $\xi$.

To control $V(\mathfrak{L}_2) \log (|\mathfrak{L}_2|)$, we must be slightly more careful. For any $I \in \mathfrak{L}_2$, we know that
\begin{align*}
P_I(\mathfrak{L}_2) & \geq \frac{1}{q} \sum_{J \in \mathfrak{L}_2, \rho(I,J) > s} \left( \min_{J \in \mathfrak{L}_2} X_J \right) \\ & \geq  \sum_{J \in \mathfrak{L}_2, \rho(I,J) > s} \frac{\xi}{\ttau \log (\mathcal{T}_V) }\\ & \geq \frac{\xi (|\mathfrak{L}_2| - \NI)}{\ttau \log (\mathcal{T}_V)},
\end{align*}
where we use the lower bound defining $\mathfrak{L}_2$. By inclusion, $P_I(\mathfrak{L}_2) < P_I(\mathfrak{T})$, and Corollary \ref{PIbound} allows us to conclude that
\begin{align*}
V(\mathfrak{T}) \left[\frac{\xi (|\mathfrak{L}_2| - \NI)}{\ttau \log (\mathcal{T}_V)} \right] & \leq \frac{1}{q}\sum_{I \in \mathfrak{T}} X_I
P_I(\mathfrak{L}_2) \\ & \leq 3 \phi(\mathcal{T}_V) + \psi(\mathcal{T}_V) \, .
\end{align*}
Solving for $|\mathfrak{L}_2|$, we deduce that
\[
|\mathfrak{L}_2| \leq \NI + \frac{\ttau}{\xi}\cdot \log (\mathcal{T}_V) \cdot  (6\phi(\mathcal{T}_V) + 2\psi(\mathcal{T}_V)) \, ,
\]
where we bound $V(\mathfrak{T})$ from below by $1/2$ by Lemma \ref{lmm6}. Referring back to \eqref{eq:OptimizationOnTM}, we see that
\begin{align*}
|\mathfrak{L}_2| \leq \NI + \frac{C \ttau \cdot [\log \log n  + (\log \log n)^2]}{\xi \log n} + \frac{\ttau \log \mathcal{T}_M}{\log n} \,.
\end{align*}
Using \eqref{bound1}  and Lemma \ref{lmm3} to bound $\NI$ and $\ttau$ from above, this proves that $|\mathfrak{L}_2|$ is uniformly bounded in $n$. Appealing to \eqref{vbounds} a final time, 
\[
V(\mathfrak{L}_2) \log (|\mathfrak{L}_2|) < C \xi < \xi^{2/3}.
\]
This completes the proof.
\end{proof}

As promised, we now show that $\mathfrak{P}$ has the desired geometric properties:
\begin{lem}\label{MaximalClique}
For all $n$ sufficiently large and $\xi$ sufficiently small, 
\[
\text{diam}(\mathfrak{P}) \leq s \quad \text{ and } \quad \mathcal{T}_P > \ttau \left(1 - \xi^{1/3}\right),
\]
that is - the set $\mathfrak{P}$ is a $\xi^{1/3}$-almost maximal clique set.
\end{lem}
\begin{proof}
Assume that there exists a pair of indices $I^*,J^* \in \mathfrak{P}$ such that $\rho(I^*,J^*) >s$. Then, 
\[
\frac{1}{q} \sum_{I\in \mathfrak{T}} X_I P_I(\mathfrak{T}) \geq \frac{X_{I^*} X_{J^*}}{q^2} \geq \frac{\xi^2}{\ttau^2},
\]
where the final bound is from the definition of $\mathfrak{P}$. By Corollary~\ref{PIbound}, we know the lefthand quantity cannot exceed $3 \phi(\mathcal{T}_V)+ \psi(\mathcal{T}_V)$, which is bounded above by $4/(\log n)^{1/2}$ - a clear contradiction for all sufficiently large $n$. Thus, the diameter $\mathfrak{P}$ is at most $s$.

Combining Lemmas \ref{lem:Jensen} and \ref{lmm7} gives
\begin{align*}
&V(\mathfrak{P}) \left(\log(q/w) - 1 \right) +  V(\mathfrak{P}) \left(\log V(\mathfrak{P}) - \log \left(\frac{\mathcal{T}_P}{\tilde{\tau}_s}\right)\right) \\
&< V(\mathfrak{P}) \left(\log(q/w) - 1 \right) + \xi^{1/2}\, ,
\end{align*}
and therefore
\[
\mathcal{T}_P > \tilde{\tau}_s \cdot V(\mathfrak{P}) \exp \biggl(- \frac{ \xi^{1/2}}{V(\mathfrak{P})}\biggr) >  \tilde{\tau}_s \cdot V(\mathfrak{P}) \biggl(1 - \frac{\xi^{1/2}}{V(\mathfrak{P})}\biggr)
\]
using the standard estimate $e^{-x} \ge 1-x$. Combining this with the lower bound on $V(\mathfrak{P})$ from Lemma \ref{lmm7} forces
\[
\mathcal{T}_P  > \tilde{\tau}_s (1 - \xi^{1/3})\, 
\]
for all sufficiently large values of $n$ and small values of $\xi$.
\end{proof}

\subsection{Convex Analysis}
We are nearly done with the proof: all that remains is to show that, for most $I \in \mathfrak{P}$, $X_I$ is close to $q/\tilde{\tau_s}$, and then to formally prove the theorem. The essential additional information we are now armed with is an {\em upper} bound on $\mathcal{T}_P$ --- namely $\tilde{\tau}_s$, as $\mathfrak{P}$ has diameter at most $s$, and $\tilde{\tau}_s$ is the largest possible cardinality for such a set of indices.

\begin{lem}\label{QuantitatveJensens}
Let $\mathfrak{P}$ be as above, and assume that $\mathcal{H}_\xi$ holds.  Define 
\[
\mathfrak{B} := \left\{ I: \left|\frac{X_I \tilde{\tau}_s}{q} - 1 \right| < \xi^{1/5} \right\} \quad \text{ and } \quad \mathfrak{C} = \mathfrak{P} \setminus \mathfrak{B}.
\]
Then, for all sufficiently large $n$ and $\xi$ sufficiently small, 
\[
|\mathfrak{B}| > (1 - 10 \xi^{1/10})\ttau, \quad \, |\mathfrak{C}| < 9 \xi^{1/10} \ttau, \quad \text{and} \quad V(\mathfrak{C}) < 10 \xi^{1/10}. 
\]
\end{lem}
\begin{proof}
We consider the Taylor expansion of $Y_I$ around the value $q/\tilde{\tau}_s$. Explicitly, we let $f(x) = x (\log(x/\mathcal{D}) -1) + \mathcal{D}$, and by Taylor's theorem,
\[
Y_I = f(X_I) = f\biggl(\frac{q}{\tilde{\tau}_s}\biggr) + f'\biggl(\frac{q}{\tilde{\tau}_s}\biggr) \biggl(X_I - \frac{q}{\tilde{\tau}_s}\biggr) +
\frac{f''(L(X_I))}{2} \biggl(X_I - \frac{q}{\tilde{\tau}_s}\biggr)^2
\]
where $L(X_I)$ is some number between $X_I$ and $q/\tilde{\tau}_s$. Differentiating $f(x)$ explicitly and simplifying algebraically, we see that
\[
Y_I = \mathcal{D} - \frac{q}{\tilde{\tau}_s} + X_I \log(q/w) + \frac{1}{2L(X_I)} \biggl(X_I - \frac{q}{\tilde{\tau}_s}\biggr)^2\, .
\]
Next, we sum over $\mathfrak{P}$ and use the upper bound on $\sum_{I \in \mathfrak{P}} Y_I$ from Lemma \ref{lmm7} to deduce that
\begin{align*}
 &\frac{1}{q}\displaystyle \sum_{I \in \mathfrak{P}}\biggl[\mathcal{D} - \frac{q}{\tilde{\tau}_s} + X_I\log(q/w) + \frac{1}{2L(X_I)} \biggl(X_I -
 \frac{q}{\tilde{\tau}_s}\biggr)^2 \biggr] \\
 &\leq V(\mathfrak{P}) (\log(q/w) -1) + \xi^{1/2}\, .
\end{align*}
We ignore the positive term $\mathcal{D}$ on the lefthand side. From Lemma \ref{MaximalClique}, the diameter of $\mathfrak{P}$ is at most $s$, and hence $\mathcal{T}_P \leq \ttau$. Thus,
\begin{equation}\label{lxi}
\frac{1}{q}\sum_{I \in \mathfrak{P}} \frac{1}{2L(X_I)} \biggl(X_I - \frac{q}{\tilde{\tau}_s}\biggr)^2 \leq \frac{\mathcal{T}_P}{\ttau} - V(\mathfrak{P}) + \xi^{1/2} \leq 2
\xi^{1/2} \, ,
\end{equation}
where the final inequality follows thanks to the lower bound on $V(\mathfrak{P})$ from Lemma~\ref{lmm7}.

Now, define
\[
\mathfrak{W}_1:= \left\{I \in \mathfrak{P}: X_I \geq (1 + \xi^{1/5}) q/\tilde{\tau}_s\right\}
\]
and
\[
\mathfrak{W}_2:=  \left\{I \in \mathfrak{P}: X_I \leq (1 - \xi^{1/5}) q/\tilde{\tau}_s \right\}\, .
\]
We recall that $\mathfrak{C} = \mathfrak{P} \setminus \mathfrak{B} =  \mathfrak{W}_1 \cup \mathfrak{W}_2$. On $\mathfrak{W}_1$, the function $1/L(X_I)$ is bounded below by $1/X_I$. Thus, \eqref{lxi} implies that
\[
 \frac{|\mathfrak{W}_1|}{q} \cdot \min_{I \in \mathfrak{W}_1} \left\{ \frac{1}{2X_I} \biggl(X_I - \frac{q}{\tilde{\tau}_s}\biggr)^2\right\} \leq 2 \xi^{1/2} \, .
\]
The function $x \mapsto (x - q/\tilde{\tau}_s)^2/(2x)$ is strictly increasing on the interval $[(q/\tilde{\tau}_s)(1 +\xi^{1/5}), \infty)$,  and always convex. Using the first fact, we find that
\begin{align*}
 |\mathfrak{W}_1| \cdot \left( \frac{\xi^{2/5}}{2\tilde{\tau}_s \cdot (1 + \xi^{1/5})} \right) \leq 2 \xi^{1/2}.
\end{align*}
This implies that $|\mathfrak{W}_1| < 5 \xi^{1/10} \tilde{\tau}_s$. To control $V(\mathfrak{W}_1)$, we apply (the standard version of) Jensen's inequality to the convex function $x \mapsto (x - q/\tilde{\tau}_s)^2/(2x)$. Algebraically manipulating the resulting expression gives 
\[
\frac{1}{2V(\mathfrak{W}_1)} \left(V(\mathfrak{W}_1) - \frac{|\mathfrak{W}_1|}{\ttau}\right)^2 \le \frac{1}{q}\sum_{I \in \mathfrak{W}_1} \frac{1}{2X_I} \biggl(X_I - \frac{q}{\tilde{\tau}_s}\biggr)^2 \leq 2 \xi^{1/2},
\]
where the final bound follows from \eqref{lxi}. Since the lefthand side is increasing in $V(\mathfrak{W}_1)$ whenever $ V(\mathfrak{W}_1) \geq |\mathfrak{W}_1| /\tilde{\tau}_s$, we can conclude that 
\[
V(\mathfrak{W}_1) \leq |\mathfrak{W}_1| /\tilde{\tau}_s + 2 \xi^{1/4} < 6 \xi^{1/10}.
\] 
For $\mathfrak{W}_2$, we can bound $1/L(X_I)$ from below by $\tilde{\tau}_s/q$. A final appeal to \eqref{lxi} gives that 
\[
 \frac{|\mathfrak{W}_2|}{q} \cdot \min_{I \in \mathfrak{W}_2} \left\{ \frac{\tilde{\tau}_s}{2q} \biggl(X_I - \frac{q}{\tilde{\tau}_s}\biggr)^2\right\} < 2 \xi^{1/2} \, .
\]
We can bound the minimum from below by assuming that some $X_I$ realizes the upper bound that defines $\mathfrak{W}_2$. In this case, it is immediate that $|\mathfrak{W}_2| \le  4 \xi^{1/10} \ttau$. Finally, $X_I \leq q /\ttau$ for each $I \in \mathfrak{W}_2$, and therefore, $V(\mathfrak{W}_2) < 4 \xi^{1/10}$. Putting these terms together, we find that 
\[
V(\mathfrak{C}) = V(\mathfrak{W}_1) + V(\mathfrak{W}_2) < 10 \xi^{1/10}.
\]
We also see that
\[
|\mathfrak{C}| = |\mathfrak{W}_1| + |\mathfrak{W}_2| < 9 \xi^{1/10} \ttau.
\]
The lower bound on $|\mathfrak{B}|$ follows from
\[
|\mathfrak{B}| = \mathcal{T}_P - |\mathfrak{C}|> \ttau(1 - \xi^{1/3}) -   9 \xi^{1/10} \ttau > \ttau\left(1 - 10 \xi^{1/10}\right),
\]
where the penultiamte inequality was proved in Lemma \ref{MaximalClique}.
\end{proof}

We have completed the proof of the difficult assertion in Theorem~\ref{thm2}; all that is left is to ensure the second stipulation holds.

\begin{proof}[Proof of Theorem~\ref{thm2}]
We recall the definition of $\mathcal{G}_{n,\tdelta}(\teps)$: there must a pair of sets $\mathfrak{B}$ and $\mathfrak{C}$ such that  

a)$\mathfrak{B}$ is $ \teps$-almost maximal clique set such that $I \in \mathfrak{B}$ implies
\[
\left|\frac{\tilde{\tau}_s X_I}{q} - 1 \right| < \tilde{\varepsilon},
\]

b) $\mathfrak{C}$ satisfies
\[
|\mathfrak{C}| < \teps \ttau \quad \text{ and } V(\mathfrak{C}) < \tilde{\varepsilon},
\]
and \\

\, \,c) whenever $J \in (\mathfrak{B} \cup \mathfrak{C})^c$,
\[
X_J < \frac{\teps \cdot q}{\ttau}.
\]

We set $\xi =(\teps/10)^{10}$. Then, whenever $\mathcal{H}_\xi$ holds, the sequence of assertions given by Lemmas \ref{lmm6}, \ref{lmm7}, \ref{MaximalClique} and \ref{QuantitatveJensens} assure us that, for $n$ sufficiently large and $\teps$ sufficiently small (interpreted according to our stated conventions), the sets $\mathfrak{B}$ and $\mathfrak{C}$ of Lemma \ref{QuantitatveJensens} satisfy the first and second conditions of $\mathcal{G}_{n,\tdelta}(\teps)$.

To show the final condition holds, we must show that all elements of $T \setminus \mathfrak{P}$ are small. If $\mathcal{T}_V > \mathcal{T}_P$, then $X_{\mathcal{T}_P +1} < \xi q/\ttau < \teps q/\ttau$. This implies the upper bound holds for every element outside of $\mathfrak{P}$, thanks to the ordering of the elements.

We are left with the scenario in which $\mathcal{T}_V = \mathcal{T}_P$. By definition (recall \eqref{eq:mathcalTV} and \eqref{eq:mathcalTP}), this means that $\mathfrak{P} = \mathfrak{T}$, and therefore $\mathcal{T}_V \leq \ttau$.  Formally, it is still possible that $X_{\mathcal{T}_V +1} \geq  \xi q/\ttau$. By Lemma \ref{lem:Jensen} and Lemma \ref{lmm6}, we deduce that  
\begin{align*}
\frac{1}{q} \displaystyle \sum_{I \in \mathfrak{T}}  Y_I &\geq V(\mathfrak{T}) \left(\log(q/w) +\log(V(\mathfrak{T})) - \log\left(\frac{\mathcal{T}_V}{\tilde{\tau}_s}\right )- 1 \right) \\
&\geq \left(1 - \frac{2 \xi}{ \log n}\right) \left(\log(q/w) - 1 - \frac{3 \xi}{\log n}\right)\, ,
\end{align*}
where the contribution of the term including the logarithm of $\mathcal{T}_V$ is non-negative and thus can be safely ignored. Note that, since we assumed $q > 3w$, the righthand side above is positive for all sufficiently large $n$. Therefore, \eqref{bound2} implies that
\begin{align*}
\frac{1}{q} \displaystyle \sum_{I \in \mathfrak{I}\setminus \mathfrak{T} } Y_I &\leq \left[\log(q/w) -1 + \xi \right] - \left(1 - \frac{2 \xi}{\log n}\right) \left[\log(q/w) - 1 - \frac{3 \xi}{\log n} \right] \\
& \leq \frac{ 4\xi \log(q/w)}{\log n} + \xi \\
& \leq C \xi\, .
\end{align*}
As before, the final inequality follows because $\log (q/w) \leq C \log n$ for some~$C$.

Suppose $X_{\mathcal{T}_V +1} \geq \teps q /\ttau = 10 \xi^{1/10} q/\ttau$. Then we find that
\begin{align*}
Y_{\mathcal{T}_V +1} & = X_{\mathcal{T}_V +1} \left(\log(X_{\mathcal{T}_V +1}/\mathcal{D}) - 1\right) +\mathcal{D} \\
&\geq  \xi q  \left(\log(q/w) + \log (10\xi^{1/10}) - 1 \right) + \mathcal{D} \, .
\end{align*}
 If we divide through by $q$, we find that this expression still grows with $n$, whereas the earlier upper bound is uniformly bounded. This is a contradiction, and we get the upper bound $X_{\mathcal{T}_V +1} < \teps q/\ttau$.

The above computation implies that any configuration in $\mathcal{H}_{(\teps/10)^{10}}$ is also in $\mathcal{G}_{n,\tdelta}(\tilde{\varepsilon})$. Taking complements and intersecting with $\mathcal{L}_n(\tdelta)$, we conclude that
\begin{align*}
&\mathbb{P}[\mathcal{G}_{n,\tdelta}(\tilde{\varepsilon})^c \cap \mathcal{L}_n(\tilde{\delta})] \\
&\leq \mathbb{P}[\mathcal{H}_{(\teps/10)^{10}}^c \cap \mathcal{L}_n(\tilde{\delta})] \leq 3 \exp( - q [\log(q/w) -1 + (\teps/10)^{10} /2] ),
\end{align*}
where the final inequality is Proposition \ref{prop:DiscreteStructureTheroem}.
\end{proof}
\section{Moving from Discrete to Continuous}\label{sec:Geometry}
This section has three parts: first, we prove several estimates that show the discrete setting of the $s$-graded model approximates the continuous geometry of $\mathbb{T}^d$. We then prove a proposition relating the random geometric graph structural event $\mathcal{F}_n(\varepsilon)$ with $\mathcal{G}_{n,\tdelta}(\teps)$, given by the structure theorem \ref{thm2} on the $s$-graded model, with appropriately chosen parameters. The second part is probabilistic, where we find a tight lower bound on the event that the number of edges in the random geometric graph exceeds its mean. We then assume Theorem \ref{thm2} and deduce Theorem~\ref{thm1} by choosing $\tdelta$ and $\teps$ judiciously and employing a correlation inequality. 

\subsection{Geometric Lemmas}
Recall that (as in \eqref{InnerOuterHullDef}), for any $K \subset \mathbb{T}^d$, we define $\mathfrak{R}(K)$ and $\mathfrak{O}(K)$ to be the maximal (resp. minimal) subsets of $T$ such that
\begin{equation}
\mathfrak{U}\left(\mathfrak{R}(K)\right) \subset K  \quad \text{and} \quad K\setminus K' \subset \mathfrak{U}\left(\mathfrak{O}(K)\right) \, ,
\end{equation}
where $K'$ is some subset of $K$ of Lebesgue measure $0$. Recall that $\lambda(\cdot)$ will be used to denote Lebesgue measure of sets. We begin by showing that this operation does not alter the measure of convex subsets $S$ of diameter at most $r$ by very much. Formally:

\begin{pro}\label{FatteningProp}
Fix $\varepsilon >0$, and let $S$ be a convex subset of diameter at most $2r$. Then, there exists an $s_0$ that depends only on $\varepsilon$, the dimension, and the choice of norm, such that whenever $s > s_0$, the inner hull $\mathfrak{R}(S)$ satisfies
\begin{equation*}
\lambda(S) -\frac{\varepsilon^2 \tau }{64} < \lambda\big(\mathfrak{U}(\mathfrak{R}(S)) \big) \leq \lambda(S)
\end{equation*}
Similarly, the corresponding inequality
\begin{equation*}
\lambda(S)  \leq \lambda \big(\mathfrak{U}(\mathfrak{O}(S)) \big)< \lambda(S) + \frac{\varepsilon^2  \tau }{64}
\end{equation*}
hold for the outer hull $\mathfrak{O}(S)$.
\end{pro}
Note that, for any fixed $S$, this result follows from the continuity of Lebesgue measure. The essential part of this proposition is that the choice of $s_0$ is uniform for all convex subsets $S$ that satisfy the assumptions of the proposition.

Instead of proving the proposition directly, we consider the following minor modification:
\begin{lem}\label{BoundaryFattening}
Let $S$ be a convex set as in Proposition \ref{FatteningProp}. Define
\[
(\partial S)_{l} := \left\{y : \| \partial S -y \| \leq l\right\}\, ,
\]
where $\|B-x\|$ is shorthand for $\inf_{b\in B} \|b-x\|$ for any set $B$. Then there exists an $s_0$, depending on $\varepsilon$, the dimension, and the norm, such that $s > s_0$ implies 
\[
\lambda\left( (\partial S)_{\varsigma/m} \right)< \frac{\varepsilon^2 \tau}{64} \, ,
\]
where $\varsigma$ is the diameter of a unit square (as in \eqref{DiameterDefinition}).
\end{lem}

This lemma implies Proposition~\ref{FatteningProp}, since
\[
S \subset \mathfrak{U}\left(\mathfrak{R}(S)\right) \cup (\partial S)_{\varsigma/m}
\]
and
\[
\mathfrak{U}(\mathfrak{O}(S)) \subset S \cup (\partial S)_{\varsigma/m} \, .
\]
Taking the Lebesgue measure of both sides and using subadditivity gives the two nontrivial bounds in Proposition \ref{FatteningProp}.
\begin{proof}[Proof of Lemma \ref{BoundaryFattening}]
Heuristically, the volume of $(\partial S)_{\varsigma/m}$ should be commensurate with the product of $\varsigma/m$ with the surface area of $S$. Since $S$ has diameter at most $2r$ and is a convex set (and therefore its boundary cannot be too convoluted), this surface area ought to be bounded above by $Cr^{d-1}$. To formalize this loose heuristic, we turn to the tools of geometric measure theory. Although this approach is standard in that field, we include a detailed proof for completeness.

Consider a function $f:\mathbb{R}^d \rightarrow \mathbb{R}$ that is Lipschitz with respect to the Euclidean distance, and a Borel set $A$. The Euclidean coarea formula \cite[pg. 248]{Federer} states that, with the functions as above,
\[
\displaystyle \int_{A} \|Df(x)\|_2 dx = \displaystyle \int_{-\infty}^{\infty}  H^{d-1}\left(A \cap f^{-1}(y) \right) dy\, ,
\]
where $\| \cdot \|_2$ is the Euclidean norm, and $H^{d-1}$ is the Hausdorff measure on the surface (effectively the surface area) and $Df$ is the gradient of $f$, which exists since  $f$ is almost everywhere differentiable.

The natural choice for our analysis is $f(x) =\|\partial S - x\|$. We begin by proving that $f$ is Lipshitz with respect to the Euclidean norm. We recall the
classical fact that all norms are equivalent in finite dimensional space, i.e. there exists two positive constants $c$ and $C$ such that, for all $x$ and $y$,
\[
c \|x -y\| \leq \|x -y\|_2 \leq C \|x- y\|\, .
\]
Pick two points $x$ and $y$, and let $a \in  \partial S$ be the point such that $f(x) =
\|x - a\|$ (this point exists because $\partial S$ is closed). Then
\[
f(y) - f(x) \leq  \|y - a\| - \|x - a\|   \leq \|x-y\| \leq C \|x-y\|_2\, .
\]
Similarly, $f(x)-f(y)\le C\|x-y\|_2$.

Thus, $f$ is differentiable almost everywhere. Pick an $x$ where the function is differentiable, and let $a$ be as before. Then, for any $t \in (0,1)$,
\[
f(x + t(a-x)) \le f(x) - t \|a-x\|\, ,
\]
by the properties of norms. Subtracting $f(x)$ from both sides, dividing by $t$ and letting $t\to 0$, we get
\[
\langle a-x, Df\rangle \le - \|a-x\|\ \, ,
\]
where $\langle \cdot, \cdot \rangle$ is the Euclidean inner product (or in this case, the directional derivative). Applying the Cauchy-Schwarz inequality, we conclude that
\[
\|a-x\| \leq \|a-x\|_2\|Df\|_2\, ,
\]
which implies, by the equivalence of  norms, that
\[
\|Df\|_2 \geq \frac{\|a-x\|}{\|a-x\|_2} \geq c\, .
\]
Letting $A = \{ x : f(x) \leq \varsigma/m\}$, we apply the Euclidean coarea formula to deduce that 
\begin{equation}\label{CoareaS}
 \lambda\left[(\partial S)_{\varsigma/m}\right] \leq C \displaystyle \int_0^{\varsigma/m} H^{d-1}\left(\{y : \|\partial S - y\| = z \} \right) dz   \, ,
\end{equation}
where $C$ is some (possibly different) universal constant. Note that, for sufficiently small $z$ and $T$ convex, the set $\{y :  \|\partial T -y\| = z \}$ has two connected components: one inside $T$ and the other outside of it. We wish to show that both are boundaries of convex sets. The outer one is the boundary of the affine sum of $S$ and the ball of radius $z$, which is known to be convex. The internal one is the boundary of 
\[
S^{(z)} := \{x : x \in S, \|\partial S - x\| \geq z\} \, .
\]
We claim that this set is also convex: it if it weren't, we could find $x,y \in S^{(z)}$ such that $w = t x + (1-t) y \not \in S^{(z)}$ for some $t \in (0,1)$. Let $v$ be the minimal length vector such that $w + v \in \partial S$. Then, since $\|v\| < z$ by definition, we can find an $\varepsilon$ sufficiently small such that  $w + (1+\varepsilon) v \not \in S$, while $x +(1 + \varepsilon)v$, $y +(1 + \varepsilon)v$ are both in $S$, contradicting convexity of $S$.

We complete the proof using Cauchy's surface area formula \cite[pg. 55]{Rota}: for any convex $S$, 
\[
H^{d-1}(\partial S) = C_d \int_{u \in \mathbb{S}^{d-1}} \lambda_{d-1}\left( S | u^{\perp} \right) du \,
\]
where $\mathbb{S}^{d-1}$ is the $d-1$-dimensional unit ball in $\mathbb{R}^d$, $\lambda_{d-1}$ is the $d-1$-dimensional Lebesgue measure, $S | u^{\perp}$ is the projection of $S$ onto the $d-1$ dimensional subspace perpendicular to $u$, and $C_d$ is a constant used to ensure the Hausdorff and Lebesgue measures are compatible.

We apply this formula to the two convex sets $(S)_z$ and $S^{(z)}$ (discussed above) whose boundaries give the two connected components of $\{y : \|\partial S - y\| = z \}$. The projection of either set onto a $d-1$-dimensional subset has diameter at most $2(r+z)$, and therefore, by the triangle inequality, is included in some ball of diameter $4(r+z)$. Thus, inclusion implies that
\[
\lambda_{d-1}\left((S)_z | u^{\perp} \right) \leq C_d (r+z)^{d-1} \quad \text{ and } \quad  \lambda_{d-1}\left(S^{(z)} | u^{\perp} \right) \leq C_d (r+z)^{d-1}
\]
for some constant $C_d$. Substituting this into \eqref{CoareaS} gives
\[
\lambda\left[(\partial S)_{\varsigma/m}\right] \leq C \int_{r}^{r+(\varsigma /m)} u^{d-1} du = C  [(r + \varsigma/m)^d - r^d] \, . 
\]
Increasing $s$, we can ensure that $[1 + \varsigma/(r m)]^d < 1 +  (2 d \varsigma)/(r m )$, and therefore,
\[
\lambda\left[(\partial S)_{\varsigma/m}\right] \leq \frac{C r^{d-1}}{m} \leq \frac{C \tau}{rm} < \frac{C \tau}{s -r} \, ,
\]
where the value of $C$ may change from one inequality to the next. Increasing $s$, we get the desired result.
\end{proof}

Next, we prove a lemma that proves that almost maximal clique sets are, essentially, discretized versions of balls of diameter $r$:
\begin{lem}\label{GeoLemma}
Fix $\eps >0$. Then there exists $s_0$ such that, for any $s > s_0$, $r$ sufficiently small, and $s^{-1/20}$-almost maximal clique set $\mathfrak{B}$ in the $s$-graded model, there exists a set of indices $\mathfrak{H} \subset T$ with $|\mathfrak{H}| < \eps \cdot  \ttau$ and $B$, a ball of diameter $r$ in $\mathbb{T}^d$, such that  
\[
B \subset \mathfrak{U}(\mathfrak{B} \cup \mathfrak{H} ) \quad \text{ and } \quad \mathfrak{U}(\mathfrak{B}) \subset (B)_{\eps r}.
\]
\end{lem}
The choice of the term $s^{-1/20}$ is somewhat arbitrary --- the above result will follow for any function which vanishes as $s$ grows. We framed the lemma as we did since the function $s^{-1/20}$ is used in the proof of Theorem \ref{thm1} below.

\begin{proof}
To prove this lemma, we go through the abstract framework of Hausdorff convergence of subsets of a metric space. Consider an abstract metric space $X$ imbued with metric $\iota$, and, for any $S \subset X$, define the $l$-fattening of $S$ as before, using the metric $\iota$ to measure distance. For any two $A,B \subset X$, the Hausdorff distance is defined as
\[
\iota_{H} (A,B) := \inf \{ l : A \subset (B)_l,\, B \subset (A)_l\}\, .
\]
If $X$ is compact in the topology defined by $\iota$, the space of closed subsets of $X$ makes a compact space with respect to this metric \cite[page 294]{Petersen}.

Recall that $T_s = \{1,2,\dots,m\}^d$ is a set of indices, where we add the subscript $s$ to emphasize the dependence on this variable. Let $\{\mathfrak{B}_{s}\}_{s \geq 2}$ be a sequence of $s^{-1/20}$-almost maximal clique sets, where $\mathfrak{B}_{s} \subset T_s$. Define $A$ to be some ball of radius $r(1 + 2\varsigma)$ in $\mathbb{T}^d$. For any set $S$ of {\em diameter} of at most  $r(1 + 2\varsigma)$, there exists a translate of $S$ which lies completely in $A$ -- this can be done by simply translating any point of $S$ to the center of $A$. Thus, there is a translate of $\mathfrak{U}(\mathfrak{B}_{s})$ that is a subset of $A$, since
\[
\text{diam}(\mathfrak{U}(\mathfrak{B}_{s})) \leq r + \frac{2 \varsigma}{m} \leq r \left(1 + \frac{2\varsigma}{s-r}\right) < r(1 + 2\varsigma),
\]
whenever $s$ is sufficiently large and $r$ sufficiently small.

Let $\tilde{A}$ and $\tilde{B}_{s}$ be the sets $A$ and the translate of $\mathfrak{U}(\mathfrak{B}_{s})$ that are inside $A$, scaled by $1/r$ and embedded in $\mathbb{R}^d$ (which is possible as long as $r$ is sufficiently small to not ``notice'' the torroidal geometry of $\mathbb{T}^d$). Thus, $\tilde{A}$ is a ball of radius $1 + 2 \varsigma$. By  Lemma \ref{lmm3}, we know that
\[
\text{diam}(\tilde{B}_{s}) \leq 1 + \frac{C}{s}, \quad \text{and} \quad \lambda(\tilde{B}_{s}) \geq \frac{\tau (1 - s^{-1/20})}{r^d} = \frac{\nu(1 - s^{-1/20})}{2^d},
\]
with the final inequality being the definition of $\tau$.

Since all the $\tilde{B}_{s}$ are closed sets embedded in a single compact metric space (namely, $\tilde{A}$), we can extract a subsequence $\tilde{B}_{s_k}$ which converges to some set $\tilde{B}$ in the Hausdorff metric. Passing to the limit, we see that $\tilde{B}$ must have diameter of at most $1$ and measure at least $\nu/2^d$. Quoting the isodiametric inequality again \cite[pg 94]{Burago},
\[
\lambda(\tilde{B}) \leq \nu \left(\frac{\text{diam} \tilde{B}}{2} \right)^{d}\,  .
\]
where equality holds if and only if  $\tilde{B}$ is a ball of diameter $1$. This implies that $\tilde{B}$ is, in fact, the ball of diameter $1$ (up to sets of measure zero). Indeed, if we take an arbitrary subsequence of $\{\tilde{B}_s\}$, we can extract a convergent sub-subsequence whose limit will have diameter $1$ and measure $\nu/2^d$ --- meaning any sub-subsequential limit is some ball of diameter $1$. Letting $\mathcal{B}$ be the set of all balls of diameter in $A$, we find that 
\[
\lim_{s \rightarrow \infty} \inf_{\tilde{B} \in \mathcal{B}}  \iota_{H} (\tilde{B}_s,\tilde{B}) = 0.
\]
Therefore, there is an $s_0$, such that, for any $s > s_0$, there exists some ball $\tilde{B} \in \mathcal{B}$ such that 
\[
\iota_H (\tilde{B}_{s},\tilde{B}) < \eps/(16d).
\]
Scaling by $r$, we find that, for any $s > s_0$, there is a ball $B$ of diameter $r$ in $\mathbb{T}^d$, such that
\[
B \subset \left( \mathfrak{U}(\mathfrak{B}_s) \right)_{\eps r/(16d)}  \quad \text{and} \quad \mathfrak{U}(\mathfrak{B}_s)\subset (B)_{\eps r/(16d)}.
\]
The second statement implies the required inclusion of $\mathfrak{U}(\mathfrak{B})$ in a $(B)_{\eps r}$. For the other direction, we note that $\left( \mathfrak{U}(\mathfrak{B}_s) \right)_{\eps r/(16d)} \subset (B)_{\eps r/8d}$. Set $\mathfrak{H} = \mathfrak{O}(B_{\eps r/8d}) \setminus \mathfrak{B}_s$. Since $(B)_{\eps r / 8d}$ is convex, we can use Proposition \ref{FatteningProp} and Lemma \ref{lmm3} to ensure that
\[
|\mathfrak{O}\left((B)_{\eps r/8d}\right)|  \leq m^d \cdot \tau\left[ \left(1 + \frac{\eps}{4d}\right)^d + \eps^2/64\right] < \ttau (1 + \eps/2) 
\]
where the final inequality holds for sufficiently large $s$ and sufficiently small $\varepsilon$. $\mathfrak{B}_{s}$ has cardinality at least $\tilde{\tau}_s (1 - s^{-1/20})$; if $s_0$ is sufficiently large to ensure $s^{-1/20} < \eps/2$, we find that $|\mathfrak{H} | < \eps \tilde{\tau}_s$, and inclusion guarantees that 
\[
B \subset \mathfrak{U}\left( \mathfrak{B}_{s} \cup \mathfrak{H} \right).
\]
This completes the proof.
\end{proof}

\subsection{Relating the $s$-Graded Model and Random Geometric Graph Structure Theorems}
Using the geometric information derived in the previous section, we wish to prove the following proposition:
\begin{pro}\label{GnFnProp}
There is some $\eps_0>0$ such that the following holds for any $\eps<\eps_0$. Take any $\delta >0$ and $\tdelta \in [(1 - \eps/16)\delta,\delta]$. Let $\mathcal{F}_n(\varepsilon)$ and $\mathcal{G}_{n,\tdelta}(s^{-1/20})$ be the events described in Theorems \ref{thm1} and \ref{thm2}, respectively. Then, there exist $n_0$ and $s_0$ depending only on $\eps$ and $\delta$, such that if $n\ge n_0$ and $s\ge s_0$, then  
\[
\mathcal{G}_{n,\tdelta}(s^{-1/20}) \subset \mathcal{F}_n(\varepsilon).
\]
\end{pro}
\begin{proof}
Set $\teps = s^{-1/20}$ and $s_0$ and $n_0$ to be the sufficiently large to ensure that Lemma \ref{lmm3} holds for any $s > s_0$ and $n \geq n_0$ (further conditions on $s$ may be imposed on later in the proof). Assume $\mathcal{G}_{n,\tdelta}(\tilde{\varepsilon})$ occurs, and let $\mathfrak{B}$ and $\mathfrak{C}$ be the sets described in Theorem \ref{thm2}. Then, by definition, 
\begin{equation}\label{ElementwiseBoundClique}
\left|\frac{ \tilde{\tau}_s X_I}{q} -1 \right| < \teps \quad \forall I \in \mathfrak{B},
\end{equation}
\begin{equation}\label{ExceptionalSetBound}
|\mathfrak{C}| < \teps \cdot \ttau \quad \text{and} \quad \sum_{I \in \mathfrak{C}} X_I < \tilde{\varepsilon} q,
\end{equation}
and 
\begin{equation}\label{ElementwiseBoundBulk}
X_I < \frac{ \teps q}{\ttau}  \quad \forall I \not \in \mathfrak{B} \cup \mathfrak{C} \, . 
\end{equation}

Since the above bounds are in terms of $q$, whereas $\mathcal{F}_n(\eps)$ is defined using $\sqrt{2 \delta \mu}$, we begin by bounding $\sqrt{2 \delta \mu}/q$ from above and below. $\tdelta \leq \delta$ by definition, and $\mu \geq \tmu (1 + C/s)^{-1}$ by Lemma \ref{lmm3}. Therefore, 
\[
\frac{\sqrt{2 \delta \mu}}{q} = \sqrt{\frac{\delta \mu}{\tdelta \tmu}}  \geq \left(1 + \frac{C}{s}\right)^{-1/2} \geq (1 -\teps),
\]
where the final inequality holds for all $s$ sufficiently large. Similarly, $\mu \leq \tmu$ and $\tdelta > \delta (1 - \eps/16)$ implies that 
\[
\frac{\sqrt{2 \delta \mu}}{q} = \sqrt{\frac{\delta \mu}{\tdelta \tmu}}  \leq \left(1 - \frac{\eps}{16}\right)^{-1/2} \leq 1 +\eps/8,
\]
Putting this together, we see that, for all sufficiently large $s$, \begin{equation}\label{eq:comparison}
(1 - \teps) \leq \frac{\sqrt{2 \delta \mu}}{q} \leq 1 + \eps/8.
\end{equation}

We now set $s$ to be sufficiently large so that, when we apply Lemma \ref{GeoLemma} to $\mathfrak{B}$ to produce $B$, a ball of radius $r$, and $\mathfrak{H} \subset T$ with  $B \subset \mathfrak{U}(\mathfrak{B} \cup \mathfrak{H})$, we can be certain that $|\mathfrak{H}| < (\eps/8) \cdot \ttau$ and that $\mathfrak{B} \subset (B)_{ (\nu \eps^2 r)/(d \cdot 2^{d+3})}$ (the slightly odd constants are chosen to make later computations simpler). We also require that $\teps < \eps^2/16$. Our goal is to show that $B$ will satisfy the conditions of $\mathcal{F}_n(\eps)$.

For any $S \subset \mathbb{T}^d$, it is straightforward to see that 
\[
\displaystyle \sum_{I \in \mathfrak{R}(S)} X_I \leq |\chi(S)| \leq \displaystyle \sum_{I \in \mathfrak{O}(S)} X_I\, .
\]
To prove the first condition of $\mathcal{F}_n(\eps)$, it is sufficient to show that, for any convex $S \subset B$, 
\[
\displaystyle \sum_{I \in \mathfrak{R}(S)} X_I > \left(\frac{\lambda(S)}{\tau} - \eps \right) \sqrt{2 \delta \mu} \quad \text{ and } \quad \sum_{I \in \mathfrak{O}(S)} X_I < \left(\frac{\lambda(S)}{\tau} + \eps \right) \sqrt{2 \delta \mu}.
\]
For the upper bound, we use Proposition \ref{FatteningProp} and Lemma \ref{lmm3} to see that 
\[
|\mathfrak{O}(S)| = m^d \lambda(\mathfrak{U}(\mathfrak{O}(S))) < \left(\frac{\lambda(S)}{\tau} + \frac{\eps^2}{64}\right)m^d \tau  \leq  \left(\frac{\lambda(S)}{\tau} + \frac{\eps^2}{64}\right) \ttau. 
\]
We also know that, for any $\mathfrak{W} \subset T$, 
\begin{equation}\label{eq:breakdown}
\sum_{I \in \mathfrak{W}} X_I \leq \sum_{I \in \mathfrak{C}} X_I + |\mathfrak{W}| \cdot \max_{I \not \in \mathfrak{C}} X_I < \left(\teps + \frac{|\mathfrak{W}| (1 + \teps)}{\ttau}\right) q,
\end{equation}
using \eqref{ElementwiseBoundClique}, \eqref{ExceptionalSetBound}, and \eqref{ElementwiseBoundBulk}. Applying this to $\mathfrak{O}(S)$ gives that 
\begin{align*}
\sum_{I \in \mathfrak{O}(S)} X_I & < \left[\teps + \left(\frac{\lambda(S)}{\tau} + \frac{\eps^2}{64}\right) (1 + \teps)\right] q \\ & \leq \left[\frac{\teps}{1 - \teps} + \left(\frac{\lambda(S)}{\tau} + \frac{\eps^2}{64}\right) \cdot \frac{1 + \teps}{1 - \teps}\right] \sqrt{2 \delta \mu},
\end{align*}
where the final inequality is \eqref{eq:comparison}. Since $\lambda(S)/\tau \leq 1$ (as $S \subset B$) and $\teps < \eps^2/16$, the righthand side is smaller than $[\lambda(S)/\tau + \eps]\sqrt{2 \delta \mu}$ for all $\eps$ sufficiently small, as required.

To get a lower bound on the sum of the $X_I$'s in $\mathfrak{R}(S)$, we observe that $S \subset B$ implies that $\mathfrak{R}(S) \subset \mathfrak{B} \cup \mathfrak{H}$, and therefore,
\begin{align*}
|\mathfrak{R}(S) \cap \mathfrak{B}| & \geq |\mathfrak{R}(S)| - |\mathfrak{H}| \\ & >  \left(\frac{\lambda(S)}{\tau} - \frac{\eps^2}{64}\right) m^d \tau  - \frac{\eps \cdot \ttau}{8}\\ & \geq \left(\frac{\lambda(S)}{(1 + \teps) \tau} - \frac{\eps^2}{64(1 + \teps)} - \frac{\eps}{8}\right) \ttau ,
\end{align*}
using Proposition \ref{FatteningProp} and Lemma \ref{lmm3}. By definition, $X_I \geq [q (1 - \teps)]/\ttau $ for any $I \in \mathfrak{B}$, and therefore 
\begin{align*}
\sum_{I \in \mathfrak{R}(S)} X_I & > \sum_{I \in \mathfrak{R}(S) \cap \mathfrak{B}} X_I \\ & > \frac{1 - \teps}{1 + \teps} \left[\frac{\lambda(S)}{\tau} - \frac{\eps^2}{64} - \frac{\eps}{4}\right] q \\ & > \frac{1 - \teps}{(1 + \teps)(1 + \eps/8)} \left[\frac{\lambda(S)}{\tau} - \frac{\eps^2}{64} - \frac{\eps}{4}\right] \sqrt{2\delta\mu}.
\end{align*}
Reusing the bounds $\lambda(S) \leq \tau$ and $\teps < \eps^2/16$ gives that, for all $\eps$ sufficiently small, the final lower bound is smaller than $[\lambda(S)/\tau - \eps]\sqrt{2 \delta \mu}$, as required.

Next, let $S'$ be a convex set disjoint from $B$ with diameter at most $r$ and $\lambda(S') > \eps \tau$. A slightly more involved version of \eqref{eq:breakdown} gives that
\begin{align*}
\sum_{I \in \mathfrak{O}(S')} X_I & < \sum_{I \in \mathfrak{C}} X_I + |\mathfrak{O}(S') \cap \mathfrak{B}| \cdot \max_{I \in \mathfrak{B}} X_I + |\mathfrak{O}(S') \cap (\mathfrak{B} \cup \mathfrak{C})^c| \cdot \max_{I \in (\mathfrak{B} \cup \mathfrak{C})^c} X_I \\ & \leq \left[\teps + \frac{|\mathfrak{O}(S') \cap \mathfrak{B}| \cdot (1 + \teps)}{\ttau} + \frac{|\mathfrak{O}(S') \cap (\mathfrak{B} \cup \mathfrak{C})^c| \cdot \teps}{\ttau}\right]q.  
 \end{align*}
Since $S' \cap B = \emptyset$, we know that $\mathfrak{U}(\mathfrak{R}(S')) \cap B = \emptyset$. We have assumed above that $\mathfrak{B} \subset (B)_{(\nu \eps^2 r)/(d \cdot 2^{d+3})}$, and therefore 
\[
\mathfrak{U}(\mathfrak{R}(S') \cap \mathfrak{B})  \subset (B)_{(\nu \eps^2 r)/(d \cdot 2^{d+3})} \setminus B.
\] 
Taking the measure of both sides and multiplying by $m^d$ to get cardinality bounds, we find that
\[
|\mathfrak{R}(S') \cap \mathfrak{B}| \leq m^d r^d \left[\left(1 + \frac{\nu \eps^2}{d 2^{d+3}}\right)^{d} -1 \right] < \frac{\eps^2 m^d \tau}{4} \leq \eps^2 \ttau/4. 
\]
Thus, 
\[
|\mathfrak{O}(S') \cap \mathfrak{B}| \leq |\mathfrak{O}(S') \setminus \mathfrak{R}(S')| + |\mathfrak{R}(S') \cap \mathfrak{B}| \leq (\eps^2/32 + \eps^2/4) \ttau < (\eps^2/2 )\ttau,
\]
using Proposition \ref{FatteningProp} to bound the difference between the inner and outer hulls of the convex set $S'$. Bounding $|\mathfrak{O}(S') \cap (\mathfrak{B} \cup \mathfrak{C})^c|$ by $|\mathfrak{O}(S')|$, using inequality \eqref{eq:comparison} to move from $q$ to $\sqrt{2 \delta \mu}$, and appealing to Proposition \ref{FatteningProp} one final time, we find that 
\begin{align*}
\sum_{I \in \mathfrak{O}(S')} X_I  & < \left[\teps + \frac{\eps^2 (1 + \teps)}{2} + \left(\frac{\lambda(S')}{\tau} + \frac{\eps^2}{32}\right)\teps \right] q \\ & \leq  \left[\frac{\teps \tau}{\lambda(S')} + \frac{2 \eps^2 \tau}{3 \lambda(S')} + \left(1 + \frac{\eps^2 \tau}{32 \lambda(S')}\right) \teps\right](1 - \teps)^{-1} \cdot \frac{\lambda(S') \cdot \sqrt{2 \delta \mu}}{\tau}. 
\end{align*}
Since $\lambda(S') > \eps \tau$ by assumption, and $\teps < \eps^2/16$, the product of bracketed term and $(1- \teps)^{-1}$ is bounded by $\eps$ for all sufficiently small values of $\eps$. This completes the proof.
\end{proof}
\subsection{Theorem \ref{thm2} implies Theorem \ref{thm1}}
The next lemma establishes a lower bound on the event $\{|E| > (1 + \delta) \mu\}$ --- the conditioning event in Theorem \ref{thm1}
\begin{lem}\label{LowerBoundH}
Fix $\delta >0$, and let $H_n(\delta) := \{|E| > (1 + \delta) \mu\}$. Then, setting $z = \max \{p/4, 3p/4 - 1/2\}$, there exists $n_0$ such that $n > n_0$ implies that
\[
\mathbb{P}[ H_n(\delta)] \geq \exp \left(-\sqrt{2 \delta \mu} \left[\log \left(\frac{\sqrt{2 \delta \mu}}{n \cdot \tau}\right) -1 \right] - C n^z \log n \right),
\]
where $C$ is an absolute constant independent of $n$ and $\delta$.
\end{lem}

\begin{proof}
Fix $B$, a ball of diameter $r$, and let $H'$ be the event that there are at least  $\lceil \sqrt{2 \delta \mu} + n^z \rceil$ vertices in $B$. Since the number of vertices in $B$ is a Poisson random variable of mean $n \tau$, we can get very straightforward lower bounds on the probability of $H'$:
\begin{align*}
\mathbb{P}[H'] & \geq  \mathbb{P}[\text{Poisson}(n \tau) =  \lceil \sqrt{2 \delta \mu} + n^z \rceil \, ] \\
&= \frac{\exp( - n \tau) \cdot (n \tau)^{\lceil \sqrt{2 \delta \mu} + n^z \rceil}}{(\lceil \sqrt{2 \delta \mu} + n^z \rceil)!}\, .
\end{align*}
By Stirling's approximation, it follows that, for sufficiently large $n$,
\[
\mathbb{P}[H'] \geq  \exp \left[ - n \tau  -  \lceil\sqrt{2 \delta \mu} + 2n^z\rceil \left(\log \left[\frac{\lceil \sqrt{2 \delta \mu} + n^z \rceil}{n \tau}\right] - 1 \right) - C \log n \right]\, ,
\]
for some universal constant $C$.  Thanks to our judicious choice of $z$, $n \tau << n^z \log n << \sqrt{2 \delta \mu}$; therefore, by increasing $n$ we can find a $C$ independent of $n$ that guarantees 
\begin{equation}\label{H'Bound}
\mathbb{P}[H'] \geq \exp \left(-\sqrt{2 \delta \mu} \left[\log \left(\frac{\sqrt{2 \delta \mu}}{n \cdot \tau}\right) -1 \right] - C n^z \log n \right).
\end{equation}
If $H'$ occurs, there exists a clique with at least $\delta \mu + n^z \sqrt{\delta \mu}$ edges in it (since $0 < z < p/2$, this quantity is a lower bound for the number of edges in a clique of $\lceil \sqrt{2 \delta \mu} + n^z \rceil$ vertices for all sufficiently large $n$). Conditional on $H'$, $H_n(\delta)$ occurs if the number of edges with at most one endpoint in $B$ exceeds $\mu - n^z \sqrt{\delta \mu}$. Let $|E|'$ be the number of edges with {\em no} endpoints in $B$. Letting $1_{i,j}$ be the indicator of an edge between vertices $i$ and $j$, we can see that
\begin{align*}
\mathbb{E}(|E|') & = \mathbb{E}\left[ {N \choose 2} \mathbb{E}(1_{1,2} \cdot 1_{v_1, v_2 \not \in B} \mid N) \right]\\
&= \frac{n^2}{2} \mathbb{P}(\{\|v_1-v_2\|\le r\} \cap \{v_1, v_2 \not \in B\}) \, ,
\end{align*}
where $N$ is the total number of point in the torus, as before, and the probability measure in the second equality is given by the uniform process. For notational convenience, let $1^B_{i,j}$ be the indicator of the event $\{\|v_i-v_j\|\le r\} \cap \{v_i, v_j \not \in B\}$, and $\mu^B$ be its expectation under the measure of the uniform process (by symmetry, this is independent of the indices $i$ and $j$). If $v_1$ is at a  distance greater than $r$ from $B$, the second condition holds trivially. For a fixed $B$, the probability that $v_1$ is within distance $r$ of $B$ is a constant multiple of $r^d$.
Thus,
\[
\mu^B  \geq (1 - Cr^d) \nu r^d\, ,
\]
for some $C$ that depends only on the norm and the dimension. Thus, the expected value of $|E|'$ is bounded below by $\mu (1 - C r^d)$. Therefore, by definition of $z$, we have the inequality
\begin{equation}\label{expbd}
\mu - \sqrt{ \delta \mu} \cdot n^z \leq \mathbb{E}[|E|'] - \frac{\sqrt{ \delta \mu} \cdot n^z}{4}
\end{equation}
We now need a variance estimate for $|E|'$:
\[
\mathrm{Var}(|E|') = \mathbb{E}(\mathrm{Var}(|E|' \mid N)) + \mathrm{Var}(\mathbb{E}(|E|' \mid N))\, .
\]
We have already calculated the expectation of $|E|'$ given $N$ above; since $\mu^B$ does not depend on $N$, we deduce that
\[
\mathrm{Var}(\mathbb{E}(|E|' \mid N)) = (\mu^B)^2 \mathrm{Var} \left[{N \choose 2}\right] \,.
\]
A standard calculation will show that the variance of ${N \choose 2}$ is $n^3 + n^2/2$. Meanwhile,
\[
\mu^B \leq \mathbb{P}(\|v_1-v_2\|\le r) = \nu r^d \,.
\]
Combining these facts gives
\[
\mathrm{Var}(\mathbb{E}(|E|' \mid N)) \leq C r^{2d} n^3 \,,
\]
for some universal constant $C$.

Next, we estimate the expression $\mathbb{E}(\mathrm{Var}(|E|' \mid N))$. We can write this variance as
\[
\mathrm{Var}(|E|' \mid N) = \mathbb{E}\left[\Big(\displaystyle \sum_{1\le i<j\le N} [1^B_{i,j} - \mu^B ] \Big)^2 \mid N\right]  \, .
\]
We now decompose this sum into three sums by distributing the square: one sum over pairs of the form $(i,j),(k,l)$ with four distinct indices, one with pairs of the form $(i,j),(i,k)$ where one index repeats, and the final over perfect squares of terms involving $(i,j)$. The expectation of the first one is zero, as the event that $i,j$ form an edge with both endpoints outside of $B$ is completely independent of the same event occurring over distinct vertices $k,l$. For a fixed choice of $(i,j)$ and $(i,k)$, we can bound
\[
\mathbb{E}[1^B_{i,j} \cdot 1^B_{i,k}] \leq \mathbb{P}[ \| v_i - v_j\| \le r, \| v_i - v_k\| \le r] = (\nu r^d)^2 \, ,
\]
where the first inequality follows by removing the requirement that the vertices lie outside of $B$, and thus increasing the probability. There are $N(N-1)(N-2)$ ways to choose a pair of indices that overlap in exactly one entry. Thus,
\[
\displaystyle \sum (1^B_{i,j} - \mu^B)(1^B_{i,k} - \mu^B) =  \displaystyle \sum \left(1^B_{i,j} \cdot 1^B_{i,k} - (\mu^B)^2\right) \leq C' r^{2d} N^3 \,
,
\]
for some universal constant $C'$. Again, this overestimates the value of this sum dramatically, but is sufficient for our purposes. Finally, the contribution of terms of the form $(1^B_{i,j} - \mu^B)^2$ to the sum is exactly ${N \choose 2} (\mu_B - \mu_B^2)$, which is bounded above by $C'' r^d N^2$. Combining these results, taking expectations over $N$, and then adding the contribution of the variance of the expectation from before, we conclude that
\begin{equation}
\mathrm{Var}(|E|') \leq C''' (r^d n^2 + r^{2d} n^3)\, ,
\end{equation}
for yet another universal constant $C'''$. $r^d n^2$ grows as $n^{p +o(1)}$, while $r^{2d} n^3$ grows as $n^{2p-1 + o(1)}$. Thus, the variance of $|E|'$ is $n^{p+o(1)}$ if $p \leq 1$, and $n^{2p-1 + o(1)}$ when $p > 1$.

By Chebyshev's inequality and \eqref{expbd},
\begin{align*}
\mathbb{P}[|E|' < \mu - \sqrt{\delta \mu} \cdot n^z] & \leq  \mathbb{P}\left[|E|' < \mathbb{E}[|E|'] - \frac{\sqrt{\delta \mu} \cdot n^z}{4} \right] \\
& \leq \frac{16\text{Var}(|E|')}{\delta \mu \cdot n^{2z}}\, .
\end{align*}
Regardless of the value of $p$, this quantity vanishes as $n^{-p/2 + o(1)}$, and therefore, with probability $1- \varepsilon$, $|E|'$ exceeds $\mu - \sqrt{\delta \mu} \cdot n^z$ for all sufficiently large~$n$.

To conclude, we note that
\begin{align*}
\mathbb{P}[H_n(\delta)] & \geq \mathbb{P}[H_n(\delta) | H'] \cdot \mathbb{P}[H'] \\ & \geq  \mathbb{P}[|E|' \geq \mu - \sqrt{2\delta \mu} \cdot n^z] \cdot \mathbb{P}[H'] \\ & \geq (1 - \varepsilon) \mathbb{P}[H']. 
\end{align*}
Substituting \eqref{H'Bound} completes the proof.
\end{proof}

\begin{proof}[Proof of Theorem \ref{thm1}]
Fix $ \delta,\varepsilon\in (0,1)$. In this proof, whenever we say ``$s$ sufficiently large'', we mean ``$s\ge s_0$ for some $s_0$ that depends only on $\delta$ and $\eps$, and the universal constant $\teps_0$ from Theorem \ref{thm2}'', and whenever we say ``$n$ sufficiently large'', we mean ``$n\ge n_0$ for some $n_0$ that depends only on $\eps$, $\delta$, and our choice of $s$''. 

Let $\tdelta_0= (1-\eps/16)\delta$ and $\tilde{\Delta}_0 = \delta$. Define $\tilde{\delta}$  to  satisfy
\[
\tilde{\delta} \tilde{\mu}_s=  \delta \mu(1 - 1/(\log n)^2).
\]
Although $\tilde{\delta}$ will depend on $n$, we have that $\tdelta_0 \le \tdelta \leq \tilde{\Delta}_0$ for $s$ and $n$ sufficiently large (where we use Lemma \ref{lmm3} to bound $\mu/\tmu$ from above and below). This allows us to apply Theorem \ref{thm2}. We also define  
\[
D_n := \left\{ |E_s| - |E| > (1+\tdelta) \tilde{\mu}_s - (1 +\delta) \mu \right\} = \left\{ |E_s| - |E| > \tilde{\mu}_s - \mu -   \frac{\delta \mu}{(\log n)^2} \right\}. 
\]
The definitions are chosen in such a way that $\{D_n \cap H_n(\delta)\}$ imply the event $\mathcal{L}_n(\tilde{\delta})$.

Let $\mathcal{F}_n(\varepsilon)$ be the event described in the statement of Theorem \ref{thm1} --- i.e.~the existence of a ball $B$ housing the giant clique.
Our goal is to prove that 
\[
\lim_{n\rightarrow \infty} \mathbb{P}[\mathcal{F}_n(\varepsilon)^c | H_n(\delta)] = 0.
\]
By Proposition \ref{GnFnProp}, $\mathcal{G}_{n,\tdelta}(s^{-1/20})$ implies the event $\mathcal{F}_n(\varepsilon)$ whenever $\tdelta \in [(1 -\eps/16)\delta, \delta]$, $n$ and $s$ are sufficiently large, and $\eps$ is sufficiently small. Then, taking complements, we find that
\begin{align*}
\mathbb{P}[\mathcal{F}_n(\varepsilon)^c | H_n(\delta)] &\leq \mathbb{P}[\mathcal{F}_n(\varepsilon)^c \cap D_n | H_n(\delta)] + \mathbb{P}[D_n^c | H_n(\delta)] \\ & \leq \frac{\mathbb{P}[\mathcal{G}_{n,\tdelta}(s^{-1/20})^c \cap D_n \cap H_n(\delta)]}{\mathbb{P}[H_n(\delta)]} + \mathbb{P}[D_n^c | H_n(\delta)] \\ & \leq  \frac{\mathbb{P}[\mathcal{G}_{n,\tdelta}(s^{-1/20})^c \cap \mathcal{L}_n(\tilde{\delta}) ]}{\mathbb{P}[H_n(\delta)]} + \mathbb{P}[D_n^c | H_n(\delta)]
\end{align*}
We begin with the second term. To analyze it, we will use an instance of the celebrated FKG correlation inequality. First stated in the context of finite lattice systems \cite{FKG}, we will use a version that first appears in \cite{JansonFKG}. There is a natural partial ordering on configurations of $\chi$: we say $\omega \prec \omega'$ if $\omega \subset \omega'$ --- i.e.~every point of $\omega$ is also in $\omega'$. An event $A$ is increasing with respect to this ordering if $\omega \prec \omega'$ and $\omega \in A$ implies $\omega' \in A$. Heuristically, $A$ is increasing adding points to the configuration only makes $A$ more likely to occur. 
\begin{lem}[The FKG Inequality \cite{JansonFKG}]\label{FKG}
Let $A$ and $B$ increasing events. Then 
\[
\mathbb{P}[A \, | \, B] \geq \mathbb{P}[A].
\]
\end{lem}
It is clear that $H_n(\delta)$ is an increasing event. In addition, we can write $|E_s| - |E|$ as a sum over pairs of points in the Poisson Point Process whose distance exceeds $r$, but whose corresponding indices  satisfy $\rho(I,J) \leq s$. Thus, $D_n$ is also an increasing event, and Lemma \ref{FKG} allows us to deduce that
\[
\mathbb{P}[D_n | H_n(\delta)] \geq \mathbb{P}[D_n]. 
\]
Taking complements, we find that 
\begin{align*}
\mathbb{P}[\mathcal{F}_n(\varepsilon)^c | H_n(\delta)] \leq  \frac{\mathbb{P}[\mathcal{G}_{n,\tdelta}(s^{-1/20})^c \cap \mathcal{L}_n(\tilde{\delta}) ]}{\mathbb{P}[H_n(\delta)]} + \mathbb{P}[D_n^c] 
\end{align*}
It's easy to see that $\{|E_s| > \tmu - \delta \mu/[2 (\log n)^2]\}$ and $ \{|E| < \mu + \delta \mu/[2 (\log n)^2] \}$ imply $D_n$, and therefore, the union bound tells us that
\[
\mathbb{P}[D_n^c] \leq \mathbb{P}\left[|E_s| \leq \tmu - \frac{\delta \mu}{2 \log^2 n}\right] + \mathbb{P}\left[|E| \geq \mu + \frac{\delta \mu}{2 \log^2 n}\right].
\]
Recalling \eqref{eq:VarianceE} and \eqref{eq:VarianceEs}, we can see that
\begin{align*}
\text{Var}[|E|] &= \mu(1 + 2\nu n r^d) =  \max \{ n^{p+o(1)}, n^{2p-1 +o(1)} \},\\
\text{Var}[|E_s|] & \leq 16 |\tilde{S}_I| \NI^2 m^d \cdot  \max \{\mathcal{D}^2, \mathcal{D}^3\}  =  \max \{ n^{p+o(1)}, n^{2p-1 +o(1)} \}, 
\end{align*}
where we use the fact that $n r^d = n^{p-1 + o(1)}$ (which follows from \eqref{pdef}) for the first line, and equations \eqref{eq:mDef}, \eqref{eq:Ddef}, and the bounds on $|\tilde{S}_I|$ and $\NI$ from Lemma \ref{lmm3} to get control of the variance of $|E_s|$. Thus, Chebyshev's inequality gives that 
\[
\mathbb{P}[D_n^c] \leq \frac{4 \log^4 n (\text{Var}[|E|] + \text{Var}[|E_s|])}{\delta^2 \mu^2} = n^{ \max \{-p , -1\} + o(1)}.
\] 
As $n$ grows, this vanishes for all admissible $p$.

To complete the proof, we must show that, for $s$ sufficiently large,
\[
\lim_{n \rightarrow \infty} \frac{\mathbb{P}[\mathcal{G}_{n,\tdelta}(s^{-1/20})^c \cap \mathcal{L}_n(\tilde{\delta}) ]}{\mathbb{P}[H_n(\delta)]} = 0
\]
as $n$ grows. We will now use Theorem \ref{thm2}, which holds since $\tdelta$ is bounded above and below by $\delta(1 - \eps/16)$ and $\delta$, respectively. Reusing the convention $q = (2 \tilde{\delta} \tilde{\mu}_s)^{1/2}$ and $w = \tilde{\tau}_s \cdot \mathcal{D}$, the quantitative bound of Theorem \ref{thm2} and Lemma \ref{LowerBoundH} gives us 
\begin{align*}
\frac{\mathbb{P}[\mathcal{G}_{n,\tdelta}(s^{-1/20})^c \cap \mathcal{L}_n(\tilde{\delta}) ]}{\mathbb{P}[H_n(\delta)]} & \leq \exp \left( - q \left[\log \left(\frac{q}{w}\right) -1 \right] - \frac{q}{2} \left(\frac{1}{10 \cdot s^{1/20}}\right)^{10} \right. \\ & \quad \left. +  \sqrt{2 \delta \mu} \left[\log \left(\frac{\sqrt{2 \delta \mu}}{n \cdot \tau}\right) -1 \right] + C n^z \log n \right).
\end{align*}
Here, the choice $\tdelta \tmu = \delta \mu (1 - 1 /(\log n)^2)$ becomes essential. Careful algebra will show that the first order terms --- i.e.~those of order $\sqrt{2 \delta \mu} \cdot \log n$ --- will cancel perfectly. In fact, 
\begin{align*}
\sqrt{2 \delta \mu} \left[\log \left(\frac{\sqrt{2 \delta \mu}}{n \cdot \tau}\right) -1 \right] - q \left[\log \left(\frac{q}{w}\right) -1 \right] & = q \log\left(\frac{w}{n \tau}\right) + q \log \left(\frac{\sqrt{2 \delta \mu}}{q}\right)  \\ & \quad + (\sqrt{2\delta \mu}-q) \log \left(\frac{\sqrt{2\delta \mu}}{e n \tau}\right).
\end{align*}
By the choice of $\tdelta$, $\sqrt{2\delta \mu}/q = (1 - 1/(\log n)^2)^{-1/2}$, and hence, the bound $\log (1-x) \geq -2x$ for all sufficiently small $x$ implies that
\[
\log \left(\frac{\sqrt{2 \delta \mu}}{q}\right)  = -\frac{1}{2} \log \left(1 - \frac{1}{(\log n)^2}\right) \leq  \frac{1}{ (\log n)^2},
\]
for all sufficiently large $n$. Similarly, $\sqrt{2 \delta \mu} - q \leq 2 q/ (\log n)^2$ for all sufficiently large $n$ and $s$. Since $\sqrt{2 \delta \mu}/e n \tau = n^{(2-p)/2 + o(1)}$ (which follows from \eqref{pdef}), we conclude that
\begin{align*}
\sqrt{2 \delta \mu} \left[\log \left(\frac{\sqrt{2 \delta \mu}}{n \cdot \tau}\right) -1 \right] - q \left[\log \left(\frac{q}{w}\right) -1 \right] & \leq q \left[\log\left(\frac{w}{n \tau}\right) + \frac{3 - p}{2\log n}\right].
\end{align*}
Recalling that $w / (n\tau) = \ttau/(m^d \tau)$, we are left with 
\begin{align}\label{eq:FKGbound}
\nonumber&\frac{\mathbb{P}[\mathcal{G}_{n,\tdelta}(s^{-1/20})^c \cap \mathcal{L}_n(\tilde{\delta}) ]}{\mathbb{P}[H_n(\delta)]} \leq \\ &\quad \quad \quad \quad \exp \left( q \left[\log \left(\frac{\ttau}{m^d \tau}\right) - \frac{1}{2 \cdot 10^{10} s^{1/2}} + \frac{3-p}{2\log n}\right] + C n^z \log n \right).
\end{align}
From Lemma \ref{lmm3}, we can see that $\ttau / (m^d \tau) \leq (1 + C/s)$, and thus, for sufficiently large values of $s$, 
\[
\log \left(\frac{\ttau }{m^d \tau}\right) - \frac{1}{2 \cdot 10^{10} s^{1/2}} \leq \frac{C}{s} - \frac{1}{2 \cdot 10^{10} s^{1/2}} \leq - \frac{1}{4 \cdot  10^{10} s^{1/2}}.
\]
For all sufficiently large $s$ and $n$, we now see that the bracketed term of \eqref{eq:FKGbound} will be negative. Since $z < p/2$, we now see that the exponent approaches negative infinity as $n$ grows, and we deduce that 
\[
\lim_{n \rightarrow \infty} \frac{\mathbb{P}[\mathcal{G}_{n,\tdelta}(s^{-1/20})^c \cap \mathcal{L}_n(\tilde{\delta}) ]}{\mathbb{P}[H_n(\delta)]} = 0,
\]
as required.
\end{proof}

\section{Proof of the Upper Tail Large Deviation Principle}\label{LDPSection}
We now prove Theorem \ref{thmLDP}, which claims that the function
\[
I(x) := \left(\frac{2 -p}{2}\right) \sqrt{2 x}
\]
is the upper tail rate function for the random variable $|E|$ with speed $s(n) = \sqrt{\mu}\log n$. Recall that we restrict our attention to subsets of the interval $(0, \infty)$, as our result only holds for events in which $|E|$ exceeds its expectation.

Instead of proving Theorem~\ref{thmLDP} directly, we will prove the following proposition instead:
\begin{pro}\label{SuperExp}
Recall that, for any $\delta >0$, 
\[
H_n(\delta) = \{|E| > (1 + \delta) \mu\} =  \left\{\frac{|E| - \mu}{\mu} > \delta \right\}\, .
\]
Then, for any $\delta >0$ fixed, 
\begin{equation}\label{LDPInt}
\displaystyle \lim_{n\rightarrow \infty} \frac{\log \mathbb{P}[H_n(\delta)]}{\sqrt{\mu}\log n}  = -I(\delta) \,.
\end{equation}
\end{pro}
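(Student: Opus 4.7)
The plan is to prove Proposition~\ref{SuperExp} by matching asymptotic upper and lower bounds on $\log\mathbb{P}(R_t)$. The lower bound is a direct Poisson planting calculation, analogous to that in Lemma~\ref{LLowerBound}; the upper bound feeds Theorem~\ref{thm1} into a discretized union bound.

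For the lower bound, fix $\varepsilon>0$, set $k_n := \lceil\sqrt{2t\mu}(1+\varepsilon)\rceil$, and pick any ball $B$ of diameter $r$. Let $N_B := |\chi(B)|$, which is Poisson$(n\tau)$. Decompose $|E| = E_{BB} + E_{BC} + E_{CC}$ according to whether $0$, $1$, or $2$ endpoints lie in $B^c$. On the event $H := \{N_B = k_n\}$, $E_{BB} = \binom{k_n}{2} = t(1+\varepsilon)^2\mu + O(\sqrt\mu)$ deterministically, while $E_{CC}$ is a function only of $\chi$ restricted to $B^c$, hence independent of $H$ by the independence property of Poisson point processes on disjoint sets; a direct second-moment computation (along the lines of the one in Lemma~\ref{LLowerBound}) shows $\mathbb{E}[E_{CC}] = \mu - O((n\tau)^2) = \mu(1+o(1))$ and $\text{Var}(E_{CC}) = o(\mu^2)$, so Chebyshev gives $\mathbb{P}(E_{CC} \ge (1-\eta)\mu) \to 1$ for any fixed $\eta>0$. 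Choosing $\eta < t\varepsilon$ ensures $R_t$ holds on this event, so $\mathbb{P}(R_t\mid H) \to 1$ and
\[
\mathbb{P}(R_t) \ge \tfrac12\mathbb{P}(H) \ge \tfrac12\, e^{-n\tau}(n\tau)^{k_n}/k_n!
\]
for large $n$. Stirling together with the polynomial scalings $n\tau = \Theta(n^{p-1})$, $k_n = \Theta(n^{p/2})$, $\log\mu = (p+o(1))\log n$ yields $\log(k_n/(n\tau)) = \tfrac{2-p}{2}\log n + o(\log n)$, so $\log\mathbb{P}(R_t) \ge -(1+\varepsilon)\sqrt{2t\mu}\cdot\tfrac{2-p}{2}\log n + o(\sqrt\mu\log n)$; dividing by $\sqrt\mu\log n$ and letting $n\to\infty$ then $\varepsilon\downarrow 0$ gives $\liminf \ge -I(t)$.

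For the upper bound, fix $\varepsilon>0$ and invoke Theorem~\ref{thm1}: taking $S=A$ in part~(a), the event $\{\exists\text{ ball }A\text{ of diameter }r:\ |\chi(A)|\ge\sqrt{2t\mu}(1-\varepsilon)\}$ has conditional probability given $R_t$ tending to $1$, so $\mathbb{P}(R_t) \le 2\,\mathbb{P}(\cdot)$ for large $n$. To discretize the continuum family of candidate balls, fix $s$ large and pass to the $s$-graded cells: by the outer-hull construction and the isodiametric argument behind Lemma~\ref{lmm3}, every ball of diameter $r$ lies inside $\mathfrak{U}(\mathfrak{K})$ for some index set $\mathfrak{K}\subset T$ of bounded cardinality, and the number of such index sets is at most $Cm^d$. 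For each, $|\chi(\mathfrak{U}(\mathfrak{K}))|$ is Poisson$(|\mathfrak{K}|\mathcal{D})$; a union bound combined with~\eqref{ChernoffUp} yields
\[
\log\mathbb{P}(R_t) \le \log(Cm^d) - (1-\varepsilon)\sqrt{2t\mu}\cdot\tfrac{2-p}{2}\log n + o(\sqrt\mu\log n).
\]
Since $\log m^d = O(\log n) = o(\sqrt\mu\log n)$ by~\eqref{pbounds}, the union-bound cost is swamped, and $\varepsilon\downarrow 0$ yields $\limsup \le -I(t)$. The genuine difficulty was already packaged in Theorem~\ref{thm1}; the one step deserving care here is the discretization of the continuum of candidate balls into a polynomial family, for which Lemma~\ref{lmm3} suffices, with the remaining Poisson asymptotics being routine and symmetric between the two bounds (the coefficient $\tfrac{2-p}{2}$ emerging from $\log(\sqrt\mu/(n\tau)) = \tfrac{2-p}{2}\log n\,(1+o(1))$).
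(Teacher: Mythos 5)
Your proof follows essentially the same two-step strategy as the paper's: for the lower bound, plant roughly $\sqrt{2t\mu}$ points in a fixed ball $B$ of diameter $r$ and use independence and a Chebyshev second-moment bound on the edges outside $B$ to conclude $\mathbb{P}(R_t\mid H)\to 1$; for the upper bound, feed Theorem~\ref{thm1} into a union bound over a polynomial-in-$n$ family of discretized candidate regions, each with Poisson tail $\exp\bigl(-\sqrt{2t\mu}\,\tfrac{2-p}{2}\log n\,(1+o(1))\bigr)$. The only differences are cosmetic: you use a multiplicative slack $(1+\varepsilon)$ where the paper uses the additive correction $n^z$ already set up in Lemma~\ref{LLowerBound}, and you discretize via outer hulls $\mathfrak{O}(A)$ of balls (count $O(m^d)$) where the paper counts maximal clique sets (count $O(m^{d\tilde\tau_s})$) --- both counts are $n^{O(1)}$ and hence $o(e^{\sqrt\mu\log n})$, so the choice is immaterial. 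Your observation that the coefficient $\tfrac{2-p}{2}$ arises from $\log(\sqrt\mu/\mathcal{D})\sim\tfrac{2-p}{2}\log n$ regardless of the bounded constant in front of $\mathcal{D}$ is exactly the point that makes the discretization count irrelevant.
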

The equivalence of this proposition to Theorem \ref{thmLDP} is standard, but its proof is straightforward and we include it for completeness.
\begin{proof}[Proof that Proposition \ref{SuperExp} implies Theorem \ref{thmLDP}]
Pick $F$ to be a closed subset of $(0,\infty)$, and let $a_F > 0$ be its leftmost endpoint. Since $I(x)$ is increasing, its infimum over $F$ occurs at $a_F$. Furthermore, $F \subset [a_F,\infty)$, and therefore, for any $\eps >0$ satisfying $(a_F - \eps) >0$, 
\[
\mathbb{P}\left[\frac{|E| - \mu}{\mu} \in F \right] \leq \mathbb{P}\left[\frac{|E|-\mu}{\mu} \in (a_F - \eps,\infty) \right] = \mathbb{P}[H_n(a_F - \varepsilon)]\, .  
\]
Taking the logarithm, dividing by $\sqrt{\mu} \log n$, applying \eqref{LDPInt} and letting $\varepsilon$ go to zero gives the upper bound for $F$.

Next, take $G$ open in $(0,\infty)$ and pick $b \in G$. For some $\eps >0$, we know that $(b - \varepsilon, b] \in G$. Therefore,
\[
\mathbb{P}\left[\frac{|E| - \mu}{\mu} \in G \right] \geq \mathbb{P}\left[\frac{|E|- \mu}{\mu} \in (b- \varepsilon,b] \right] = \mathbb{P}[H_n(b - \varepsilon)] -
\mathbb{P}[H_n(b)]\, .
\]
Applying \eqref{LDPInt} twice, we deduce that, for any $\teps >0$, there is an $n$ sufficiently large to ensure that
\begin{align*}
\mathbb{P}\left[\frac{|E| - \mu }{\mu} \in G \right] &\geq \exp \left(-(1  + \teps) \cdot I(b - \varepsilon) \cdot  \sqrt{\mu} \log n \right) \\
&\qquad - \exp \left(- (1 - \teps) \cdot I(b) \cdot \sqrt{\mu} \log n \right)\, .
\end{align*}
Picking $\teps$ sufficiently small (as a function of $\varepsilon$) ensures that the second term is smaller than half the first term. Taking logarithms, dividing by $\sqrt{\mu} \log n$, and taking $\eps$ to zero establishes the lower bound on the probability of $\{(|E| - \mu)/\mu \in G \}$, and establishes Theorem~\ref{thmLDP}.
\end{proof}

\begin{proof}[Proof of Proposition \ref{SuperExp}]
The statement that, for any $\varepsilon >0$, there exists $n$ sufficiently large to ensure that
\[
\frac{\log \mathbb{P}[H_n(\delta)]}{\sqrt{\mu}\log n}  > - (1 + \varepsilon) I(\delta)
\]
is a direct consequence of Lemma \ref{LowerBoundH}. Thus, it will be sufficient an upper bound on the probability of $H_n(\delta)$.

Fix $\varepsilon >0$.  For an arbitrary pair of events $A$ and $B$, assume that, conditional on $A$, the event $B$ occurs with probability at least $1 - \varepsilon$. This implies that
\[
 \mathbb{P}[A] \leq \left(\frac{1}{1- \varepsilon}\right)\mathbb{P}[B]\, .
\]
By Theorem \ref{thm1}, there exists a sufficient large $n$ such that conditioning on $H_n(\delta)$ implies that the random geometric graph has a clique of size at least $\sqrt{2 \delta \mu}(1 - \varepsilon)$ with probability at least $1 - \varepsilon$. This means that, for any $s$, the $s$-graded model includes a maximal clique set $\mathfrak{P} \subset T$ with at least as many vertices as the clique of the random geometric graph. Since every maximal clique set has $\tilde{\tau}_s$ indices, there can be at most $m^{d \tilde{\tau}_s}$ distinct maximal clique sets; this is an egregious overcount, but we have no need
for finer control. Thus, by the union bound, the probability that there exists a maximal clique set with $\sqrt{2\delta\mu}(1 - \eps)$ vertices is bounded above by $m^{d \tilde{\tau}_s}$ times the probability that a single one has the same property. The number of vertices in a maximal clique set is distributed as a Poisson random variable of mean $\tilde{\tau}_s \cdot \mathcal{D} = w$. Therefore, the chain of implication allows us to conclude that
\[
\mathbb{P}[H_n(\delta)] \leq \left(\frac{m^{d \tilde{\tau}_s}}{1- \varepsilon}\right) \mathbb{P}\left[\text{Poisson}(w) > \sqrt{2 \delta \mu} (1 - \varepsilon)\right]\, .
\]
Let $v := \sqrt{2\delta \mu} (1-\varepsilon)$. Applying the Chernoff bounds of Poisson random variables (see Lemma \ref{ChernoffBounds}) to the right-hand side above gives
\begin{align*}
 \mathbb{P}[H_n(\delta)] \leq & \left(\frac{m^{d \tilde{\tau}_s}}{1- \varepsilon}\right) \exp\left( - v \left[\log\left(\frac{ v}{w}\right) -1 \right] - w
 \right)\\
& \leq \exp \left( - (1 - 2 \varepsilon) \sqrt{2 \delta \mu} \log \left[\frac{\sqrt{\mu}}{w}\right]\right) \, ,
\end{align*}
where the second inequality follows for all sufficiently large $n$ by noting that all the missing terms vanish in comparison to $\sqrt{\mu}  \cdot \log n$,
and can therefore be absorbed at the cost of changing $\varepsilon$ to $2 \varepsilon$. By the definitions of $\mu$, $p$ and~$w$,
\[
\frac{\sqrt{\mu}}{w} = n^{(2-p)/2 + o(1)} \, .
\]
Therefore, for any $\eta >0$, there exists an $n$ sufficiently large to ensure that 
\[
\frac{1}{\sqrt{\mu} \log n} \log \mathbb{P}[H_n(\delta)] \leq - (1 - 2 \varepsilon) \left(\frac{2-p}{2} + \eta \right) \sqrt{2 \delta}\,.
\]
Since $\varepsilon$ and $\eta$ are arbitrary, we conclude the desired upper bound.
\end{proof}

\vskip.2in
\noindent{\bf Acknowledgment.} The authors thank Mathew Penrose for many helpful comments. They would also like to thank the anonymous referees for their thorough reports, which greatly improved this manuscript, and helped correct several errors that were pointed out in previous versions.

\bibliographystyle{plain}

\end{document}